\documentclass[12pt,reqno]{amsart}
\usepackage[utf8]{inputenc}
\usepackage[margin=1.5 in]{geometry}
\usepackage[lite]{amsrefs}
\usepackage{amsfonts}
\usepackage{mathrsfs}
\usepackage{amscd}
\usepackage{physics}
\usepackage{amsmath}
\usepackage{mathtools}
\usepackage{amssymb}
\usepackage{multicol}
\usepackage{graphicx}
\usepackage{wrapfig}
\usepackage{float}
\usepackage[english]{babel}
\usepackage[utf8]{inputenc}
\usepackage{fancyhdr}
\usepackage{amsthm}
\usepackage[all]{xy}
\usepackage{tikz-cd}
\usepackage{bbold}
\usepackage{hyperref}
\usepackage{tabularx,booktabs,caption,ragged2e}
\newcolumntype{L}{>{\RaggedRight\arraybackslash}X}
\newtheorem*{ThmA}{Theorem A}
\newtheorem*{ThmB}{Theorem B}

\newtheorem{thm}{Theorem}[section]
\newtheorem{cor}[thm]{Corollary}
\newtheorem{lemma}[thm]{Lemma}
\newtheorem{propn}[thm]{Proposition}

\theoremstyle{definition}
\newtheorem{defn}[thm]{Definition}

\theoremstyle{remark}
\newtheorem{remark}[thm]{Remark}

\newcommand{\SC}{\bar{\mathbf{s}}}
\newcommand{\TC}{\bar{\mathbf{t}}}
\newcommand{\MC}{\bar{\mathbf{m}}}
\newcommand{\UC}{\bar{\mathbf{u}}}
\newcommand{\IC}{\bar{\mathbf{\iota}}}

\newcommand{\SG}{{\mathbf{s}}}
\newcommand{\TG}{{\mathbf{t}}}
\newcommand{\MG}{{\mathbf{m}}}
\newcommand{\UG}{{\mathbf{u}}}
\newcommand{\IG}{{\mathbf{\iota}}}

\newcommand{\ST}{\tilde{\mathbf{s}}}
\newcommand{\TT}{\tilde{\mathbf{t}}}
\newcommand{\MT}{\tilde{\mathbf{m}}}
\newcommand{\UT}{\tilde{\mathbf{u}}}
\newcommand{\IT}{\tilde{\mathbf{\iota}}}

\newcommand{\tG}{\tilde{\mathcal{G}}}
\newcommand{\tA}{\tilde{A}}

\newcommand{\can}{\rm can}

\numberwithin{equation}{section}

\usepackage{graphicx} % Required for inserting images

\title{Affine deformations of cotangent groupoids}

\author{Dadi Ni}%{$^\underline{\times}$}\\
\address{School of Mathematics and Statistics, Henan University, China} %\\\vspace{1mm} \\
\email{\href{mailto:nidd@henu.edu.cn}{nidd@henu.edu.cn}}
\author{Kaichuan Qi}
\address{Department of Mathematics, Penn State University, USA} %\\\vspace{1mm} \\
\email{\href{mailto:kaichuan@psu.edu}{kaichuan@psu.edu}}
\thanks{Ni is  supported by the Natural Science Foundation of Henan Province
 (No. 252300421766). Qi's research is partially supported by the National Science Foundation (award DMS-2302447).}

\begin{document}

\maketitle
\begin{abstract}
We study affine deformations of the cotangent groupoid
\(T^*\mathcal G\rightrightarrows A^*\), governed by a one-form
\(\gamma\in\Omega^1(\mathcal G^{(2)})\), and characterize the conditions on
\(\gamma\) under which this construction is valid. We show that these
deformations arise naturally from \(\mathbb S^1\)-central extensions of Lie
groupoids via symplectic reduction, and identify the
reduced symplectic form as a multiplicative magnetic form. In particular, for Kac-Moody
extensions, this construction yields nontrivial deformations of quotient stacks
and \(\mathbb S^1\)-gerbes.

 % We study an explicit deformation of the cotangent groupoid by a correction term. We then show that such deformations arise naturally from $\mathbb{S}^1$-central extensions via symplectic reduction. In the case of Kac-Moody extensions, we obtain deformations of stacks and $\mathbb{S}^1$-gerbes.
\end{abstract}

\begin{itemize}
	\item %\hspace*{1cm}
	{\it Keywords:} Affine Poisson geometry, Symplectic groupoid, Symplectic reduction, Central extensions, Prequantization.
	%\item %\hspace*{1cm}
	%{\it AMS subject classification: 13D10 , 17B70, 16E45, 53C05, 53C12}
\end{itemize}

\tableofcontents

\section{Introduction}
Deformations arise in Lie groupoid theory in several distinct forms. On the analytic side, tangent and adiabatic groupoids play an important role in pseudodifferential calculi, index theory, and deformation quantization
\cite{ConnesTangent,NistorWeinsteinXu,Debord-Skandalis_Adiabatic}.
On the symplectic and Poisson side, central extensions, gauge transformations, and twisting procedures provide natural ways to modify multiplicative symplectic or presymplectic forms
\cite{MR1103911,MR2068969,MR1973074,MR2966162,MR2565034}.

More recently, cohomological approaches have clarified the deformation theory of Lie groupoids and related structures. Crainic, Mestre, and Struchiner constructed the deformation complex of Lie groupoids and proved its Morita invariance \cite{MR4176835}. Kosmeijer and Posthuma related this complex to the Hochschild complex of the convolution algebra \cite{Kosmeijer-Posthuma}. Cárdenas, Mestre, and Struchiner developed a deformation theory for symplectic groupoids \cite{MR5019585}, and La Pastina and Vitagliano studied deformation complexes for VB-groupoids \cite{MR4635956}.

The construction in this paper is more explicit. We deform the cotangent groupoid structure and the multiplicative symplectic form simultaneously. Recall that, for a Lie groupoid
$\mathcal G\rightrightarrows M$ with Lie algebroid $A$, the cotangent groupoid
$
T^*\mathcal{G} \rightrightarrows A^*
$
was introduced by Coste, Dazord, and Weinstein \cite{MR0996653}. Together with the canonical symplectic form, it is a fundamental example of a symplectic groupoid, integrating the linear Poisson structure on $A^*$.

We study an explicit class of affine deformations of the cotangent groupoid. We first deform the Lie groupoid structure. This deformation is governed by a one-form
$
\gamma\in\Omega^1(\mathcal G^{(2)}).
$

\begin{ThmA}
Let $\gamma \in \Omega^1(\mathcal{G}^{(2)})$. Consider the maps
$$
\hat{\SG},\hat{\TG},\hat{\MG},\hat{\UG},\hat{\IG}
$$
defined in \eqref{eq:groupoid-def-1}-\eqref{eq:inverse-map}, obtained from
the usual cotangent groupoid structure maps by adding affine correction terms
determined by $\gamma$. These maps make
$T^*\mathcal{G}\rightrightarrows A^*$ into a Lie groupoid if and only if the
following conditions hold:
\begin{itemize}
\item[(1)] Cocycle condition: $\partial \gamma = 0$;
\item[(2)] Degeneracy normalization: $(\sigma_0)^*\gamma = 0$ and
$(\sigma_1)^*\gamma = 0$.
\end{itemize}
\end{ThmA}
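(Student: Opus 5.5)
The plan is a direct verification of the groupoid axioms, reformulated simplicially so that every axiom becomes a statement about one-forms on the nerve $\mathcal G^{(\bullet)}$. I will use the description of the deformed structure maps \eqref{eq:groupoid-def-1}--\eqref{eq:inverse-map} as graph relations. Recall that $\xi\in T^*_g\mathcal G$ and $\eta\in T^*_h\mathcal G$ multiply to $\zeta\in T^*_{gh}\mathcal G$ in the undeformed cotangent groupoid if and only if
\[
\MG^*\zeta=\mathrm{pr}_1^*\xi+\mathrm{pr}_2^*\eta \quad\text{on } T_{(g,h)}\mathcal G^{(2)} .
\]
In the deformed groupoid I expect $\hat\MG$ to be characterized by the same identity with $\gamma_{(g,h)}$ added (up to the sign convention of \eqref{eq:groupoid-def-1}). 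The maps $\hat\SG,\hat\TG$ are then the undeformed ones shifted by the restrictions of $\gamma$ along the degenerate inclusions $\sigma_0,\sigma_1$ (that is, pairs $(1,g)$ and $(g,1)$). The maps $\hat\UG,\hat\IG$ are the zero section and the inverse map shifted accordingly. Smoothness of all maps is automatic, since the corrections are smooth affine translations. The real content is therefore the algebraic axioms.

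\textbf{Key steps.}
\begin{enumerate}
\item \emph{Well-definedness of $\hat\MG$.} For $\hat\MG$ to be well defined, the right-hand side of the defining identity must vanish on $\ker d\MG\subset T\mathcal G^{(2)}$. In the undeformed case this is the condition $\SG(\xi)=\TG(\eta)$. In the deformed case the extra term $\gamma$ must exactly account for the shift in $\hat\SG$ and $\hat\TG$. I will check this by restricting $\gamma$ to the vectors $(v,-w)$ tangent to the fibres of $\MG$, and using $\sigma_0^*\gamma=\sigma_1^*\gamma=0$ together with the cocycle identity evaluated on degenerate triples $(g,1,h)$.
\item \emph{Compatibility of source and target.} Next I check $\hat\SG\circ\hat\MG=\hat\SG\circ\mathrm{pr}_2$ and $\hat\TG\circ\hat\MG=\hat\TG\circ\mathrm{pr}_1$, by the same kind of restriction argument.
\item \emph{Associativity.} I pull the defining identities back to $\mathcal G^{(3)}$ along the face maps $d_0,\dots,d_3$. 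Both $(\xi\eta)\kappa$ and $\xi(\eta\kappa)$ are characterized by $\mathrm{pr}_1^*\xi+\mathrm{pr}_2^*\eta+\mathrm{pr}_3^*\kappa$ plus a $\gamma$-correction, namely $d_3^*\gamma+d_1^*\gamma$ for one bracketing and $d_0^*\gamma+d_2^*\gamma$ for the other (indices to be fixed by the conventions). Since the map $\mathcal G^{(3)}\to\mathcal G$, $(g,h,k)\mapsto ghk$, is a submersion, the two products coincide for all inputs if and only if their difference vanishes. That difference is $\pm\partial\gamma$, with $\partial=\sum(-1)^i d_i^*$.
\item \emph{Units and inverses.} The unit axiom $\hat\MG(\hat\UG\hat\TG(\xi),\xi)=\xi$ pulls back along $\sigma_0$ to the requirement $\sigma_0^*\gamma=0$. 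The right-unit axiom similarly gives $\sigma_1^*\gamma=0$. The inverse is then forced by solving the product identity on pairs $(g,g^{-1})$. Its existence and smoothness follow once the unit axioms hold, because multiplication by a fixed arrow is an affine bijection of fibres.
\end{enumerate}

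\textbf{Necessity.} For the converse, each axiom is an identity that must hold for all covectors, and the covectors range over full fibres. So the failure of associativity at a point is exactly the value of $\partial\gamma$ there, and the failure of the unit laws is exactly the values of $\sigma_i^*\gamma$. Hence conditions (1) and (2) are also necessary.

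\textbf{Main obstacle.} I expect the hardest part to be Steps 1 and 2. There one must verify that the affine shifts of $\hat\SG$ and $\hat\TG$ are exactly the ones making the corrected covector annihilate $\ker d\MG$. This requires decomposing $T_{(g,h)}\mathcal G^{(2)}$ into a horizontal part and the fibre directions of $\MG$, and identifying $\gamma$ on the fibre directions with $\sigma_i^*\gamma$ plus a $\partial\gamma$-term on the degenerate simplices $(g,1,h)$. My first attempt will be to use the splitting of $T_{(g,h)}\mathcal G^{(2)}$ induced by right translation by $h$ and left translation by $g$, and to evaluate $\partial\gamma$ on the triple $(g,1_{\SG(g)},h)$.
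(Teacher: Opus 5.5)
Your handling of well-definedness, compatibility of $\hat{\SG},\hat{\TG}$ with $\hat{\MG}$, associativity, the unit laws and the necessity direction is the paper's argument. The paper likewise describes $\ker\MG_*$ by pairs $\bigl((L_g)_*\iota_*a,(R_h)_*a\bigr)$ and cancels the resulting pairing using $\partial\gamma$ on degenerate triples. It identifies the associator with $\partial\gamma$ and reads off $\sigma_0^*\gamma,\sigma_1^*\gamma$ as the defects of the unit laws. Your single cocycle identity on $(g,1_{\SG(g)},h)$ in fact suffices, since $(\iota_*a,(R_h)_*a)=(-a,0_h)+\sigma_{0*}\bigl((R_h)_*a\bigr)$. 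The paper instead adds an identity on $(1,1,h)$ and reduces to $\gamma(\iota_*a,a)=0$.

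The real difference is the inverse. The paper checks the prescribed $\hat{\IG}$ directly, using the auxiliary identity $\gamma(\iota_*v,v)=\gamma(v,\iota_*v)$ obtained from $\partial\gamma$ on $(v,\iota_*v,v)$. You instead obtain inverses abstractly: translations are fibrewise affine bijections, and in a category a left and a right inverse coincide. That route is valid and explains why $\hat{\IG}$ is forced, but it asserts two points that you must still supply.

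First, the bijectivity. The linear part of $\eta\mapsto\hat{\MG}(\xi,\eta)$ maps the affine slice $\hat{\TG}^{-1}(\hat{\SG}\xi)\cap T^*_h\mathcal G$ to $\hat{\TG}^{-1}(\hat{\TG}\xi)\cap T^*_{gh}\mathcal G$. It is injective because every $w\in T_h\mathcal G$ occurs in a composable pair, and the two slices have the same dimension.

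Second, you must show that the resulting inverse is the map $\hat{\IG}$ of the statement. This follows by pairing $\hat{\MG}(\xi,\xi^{-1})=\UG_0(\hat{\TG}\xi)$ with $\MG_*(\iota_*w,w)=\UG_*\SG_*w$.

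What each route buys: the paper's verifies the axioms for the given formula without any existence argument, while yours shows the inverse could not have been chosen otherwise.

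One correction: $\hat{\UG}=\UG_0$ is not a shifted zero section. It is the unshifted embedding of $A^*$ as the conormal bundle of $\UG(M)$. Your unit computation, with defect exactly $\sigma_i^*\gamma$, works precisely because $\UG_0(\alpha)$ annihilates $\UG_*(TM)$.
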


When the above conditions hold, we call the resulting groupoid the
\textbf{$\gamma$-deformed cotangent groupoid}, and denote it by
$(T^*\mathcal G)_\gamma$.

We next consider the multiplicative symplectic form. The ordinary cotangent
groupoid carries the canonical multiplicative symplectic form
$\omega_{\mathrm{can}}$. After the groupoid structure is deformed by
$\gamma$, this form need not remain multiplicative. We therefore add a
magnetic term pulled back from $\mathcal G$: for
$\omega_B\in\Omega^2(\mathcal G)$, set
$
\omega_M
=
\omega_{\mathrm{can}}+({\rm pr}_{\mathcal G})^*\omega_B .
$
Then $\omega_M$ is multiplicative on $(T^*\mathcal G)_\gamma$ if and only if
$$
\partial\omega_B=d\gamma .
$$
Thus the failure of the canonical form to be multiplicative is measured by
$d\gamma$, and it is compensated by the groupoid coboundary of the magnetic
term $\omega_B$.

In the rest of the paper, we focus on a geometrically important source of such
deformations: \(\mathbb S^1\)-central extensions of Lie groupoids. Central
extensions of Lie groupoids and their relation with prequantization have been
studied extensively in Poisson geometry
\cite{MR1103911,MR2197220}, and symplectic reduction of symplectic groupoids
is a classical construction \cite{MR0944869,MR1103911}. The point relevant
here is that reduction at a nonzero moment level naturally produces the
affine deformations introduced above.

More precisely, let
\[
M\times \mathbb S^1 \longrightarrow \widetilde{\mathcal G}
\stackrel{p}{\longrightarrow} \mathcal G
\]
be an \(\mathbb S^1\)-central extension of Lie groupoids, viewed as a
principal \(\mathbb S^1\)-bundle over \(\mathcal G\). For a normalized
connection \(\theta\in\Omega^1(\widetilde{\mathcal G})\), the forms
\(\partial\theta\) and \(d\theta\) descend to forms
\[
\gamma\in\Omega^1(\mathcal G^{(2)}),
\qquad
\omega_B\in\Omega^2(\mathcal G),
\]
and satisfy
\(
\partial\omega_B=d\gamma
\)
\cite{stacks_B-X}. The next theorem identifies the corresponding
\(\gamma\)-deformed cotangent groupoid with the symplectic groupoid
obtained by reducing \(T^*\widetilde{\mathcal G}\) at moment level \(1\).

\begin{ThmB}
Let \(\widetilde{\mathcal G}\to\mathcal G\) be an
\(\mathbb S^1\)-central extension of Lie groupoids.
\begin{itemize}
\item [(1)] The symplectic reduction of
\((T^*\widetilde{\mathcal G},\omega_{\mathrm{can}})\) at moment level \(1\)
yields a reduced symplectic groupoid
\[
\Gamma=\mu^{-1}(1)/\mathbb S^1 .
\]
\item[(2)] Let
\(\theta\in\Omega^1(\widetilde{\mathcal G})\) be a normalized principal
connection. Then there exists  a natural diffeomorphism
\(
\Gamma \cong T^*\mathcal G
\),
under which the reduced groupoid structure is identified with the
\(\gamma\)-deformed cotangent groupoid
\((T^*\mathcal G)_{\gamma}\),  and the
reduced symplectic form is 
\(
\omega_{\mathrm{can}}+({\rm pr}_{\mathcal G})^*\omega_B .
\)
\end{itemize}
\end{ThmB}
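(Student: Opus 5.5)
The plan is to treat part (1) as a general fact about Hamiltonian actions on symplectic groupoids, and then to compute everything in part (2) through an explicit connection-dependent trivialization of the level set.

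\textbf{Part (1).} The central $\mathbb S^1 \subset \widetilde{\mathcal G}$ acts on $\widetilde{\mathcal G}$ by multiplication, $\tilde g\mapsto \tilde g\cdot z$; this is the principal action of the bundle $p$. Its cotangent lift acts on $T^*\widetilde{\mathcal G}$ with moment map
\[
\mu(\xi)=\langle \xi, E_{\tilde g}\rangle,
\]
where $E$ is the fundamental vector field. Since the $\mathbb S^1$ is central, $E$ is multiplicative, that is, $E_{\tilde g\tilde h}=dm(E_{\tilde g},0)=dm(0,E_{\tilde h})$. Pairing a composable pair $(\xi,\eta)$ with $\xi\cdot\eta$ then gives $\mu(\xi\cdot\eta)=\mu(\xi)=\mu(\eta)$. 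So $\mu$ is a groupoid morphism to the abelian group $\mathbb R$; more precisely, $\mu$ is constant along both source and target fibres of $T^*\widetilde{\mathcal G}$, because it is the pullback of a Casimir function on $\tilde A^*$. Consequently:
\begin{itemize}
\item $\mu^{-1}(1)$ is a wide Lie subgroupoid, since $1$ is a regular value because $E$ never vanishes;
\item the $\mathbb S^1$-action is by groupoid automorphisms and acts freely;
\item the quotient is a Lie groupoid over $\mu_{A^*}^{-1}(1)/\mathbb S^1$, and the reduced form is multiplicative, following the standard argument of Mikami--Weinstein on reduction of symplectic groupoids.
\end{itemize}
The dimension count $\dim\Gamma = 2\dim\mathcal G$ confirms that $\Gamma$ is symplectic.

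\textbf{Part (2): the diffeomorphism.} Use $\theta$ to define
\[
\Phi:\mu^{-1}(1)\to T^*\mathcal G,\qquad \Phi(\xi)=\bar\xi,\quad\text{where } \xi-\theta_{\tilde g}=p^*\bar\xi .
\]
This is well defined because $\xi-\theta$ annihilates $E$, and it is $\mathbb S^1$-invariant because $\theta$ is invariant. This is the standard Sternberg--Weinstein/magnetic identification: $\Phi$ descends to a diffeomorphism $\Gamma\cong T^*\mathcal G$. The standard computation $\omega_{\mathrm{can}}|_{\mu^{-1}(1)} = \Phi^*\omega_{\mathrm{can}} + d(\text{pullback of }\theta)$ then gives the reduced form $\omega_{\mathrm{can}}+\mathrm{pr}_{\mathcal G}^*\omega_B$, where $d\theta=p^*\omega_B$.

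\textbf{Part (2): the groupoid structure.} Transport each structure map through $\Phi$. Write $\tilde\xi=p^*\bar\xi+\theta$ and $\tilde\eta=p^*\bar\eta+\theta$. The product $\tilde\xi\cdot\tilde\eta$ in $T^*\widetilde{\mathcal G}$ is characterized by
\[
\langle\tilde\xi\cdot\tilde\eta,\, dm(v,w)\rangle=\langle\tilde\xi,v\rangle+\langle\tilde\eta,w\rangle .
\]
Subtracting $\theta$ at the product, the defect is
\[
\mathrm{pr}_1^*\theta+\mathrm{pr}_2^*\theta-m^*\theta=\pm\partial\theta=\pm\tilde p^*\gamma .
\]
Hence $\bar\xi\cdot_\gamma\bar\eta$ is the ordinary cotangent product corrected by $\gamma$ along the composable pair. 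Similarly, the source and target in $\tilde A^*|_{\mu=1}$ are identified with $A^*$ via $\theta|_M$, with correction terms given by restrictions of $\gamma$ to degenerate simplices; the unit and inverse are treated the same way. Matching these with \eqref{eq:groupoid-def-1}--\eqref{eq:inverse-map} identifies the reduced groupoid with $(T^*\mathcal G)_\gamma$. Theorem A then provides a consistency check, since normalization of $\theta$ gives exactly the degeneracy conditions.

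\textbf{Main obstacle.} I expect the hardest step to be this last bookkeeping: tracking the non-uniqueness of the decomposition $dm(v,w)$, the splitting $T^*_{\tilde g}\leftrightarrow$ base covectors, and the signs and convention of $\partial$. These must match the precise affine correction terms in the paper's definitions, particularly for $\hat\SG$ and $\hat\TG$, which involve $\sigma_0^*\gamma$-type terms that vanish only thanks to normalization. I would first verify the formulas for a trivial extension with $\theta=d\phi$-twisted connection, then in general.
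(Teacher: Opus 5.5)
Your Part (2) follows the paper's Appendix B. Your identification map (call it $\Psi$, since $\Phi$ denotes the $\mathbb S^1$-action in the paper) is $\beta^{-1}$ for $\beta(\xi)=[(dp)^*\xi+\theta]$. The multiplication defect is exactly $\mathrm{pr}_1^*\theta+\mathrm{pr}_2^*\theta-\MT^*\theta=+\partial\theta=\tilde p^*\gamma$. Your identity $\bar j^*\vartheta_L=\Psi^*\vartheta_L^{\mathcal G}+\pi^*\theta$, with $\pi$ the footpoint map, justifies the magnetic term more directly than the paper, which cites it. One caution: the corrections $\gamma(0_k,\IG_*a)$ and $\gamma(b,0_k)$ in $\hat\SG,\hat\TG$ do not vanish, and they are not restrictions $\sigma_i^*\gamma$. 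They are evaluated at $\sigma_1(k),\sigma_0(k)$, but on vectors transverse to the degenerate locus. Normalization $\UT^*\theta=0$ only removes terms such as $\theta(\UT_*\tilde\rho(a))$, and it is what produces $\hat\UG=\UG_0$ and the inverse formula.

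The real problem is Part (1), where the key step rests on false claims.

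\emph{The action is not by groupoid automorphisms.} On $\widetilde{\mathcal G}$ one has $\Phi_r(g)\Phi_r(h)=\Phi_{r^2}(gh)$, and $\Phi_r$ moves units off the unit section. Likewise $\bar\Phi_r\xi\cdot\bar\Phi_r\eta=\bar\Phi_{r^2}(\xi\cdot\eta)$.

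\emph{That property would not suffice anyway.} Since $\SC,\TC$ are $\mathbb S^1$-invariant, every pair $(\bar\Phi_{r_1}\xi,\bar\Phi_{r_2}\eta)$ is composable. So $[\xi][\eta]:=[\xi\cdot\eta]$ is well defined only if $\bar\Phi_{r_1}\xi\cdot\bar\Phi_{r_2}\eta$ stays in the orbit of $\xi\cdot\eta$ for independent $r_1,r_2$, and an action by automorphisms does not give this. What you need, and what holds, is hypothesis (3) of the Reduction Lemma:
\[
\MC(\bar\Phi_{r_1}\xi,\bar\Phi_{r_2}\eta)=\bar\Phi_{r_1r_2}\MC(\xi,\eta),\qquad \IC\circ\bar\Phi_r=\bar\Phi_{r^{-1}}\circ\IC .
\]
This is the integrated form of your own identity $E_{\tilde g\tilde h}=dm(E_{\tilde g},0)=dm(0,E_{\tilde h})$. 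Note that this identity gives $dm(E_{\tilde g},E_{\tilde h})=2E_{\tilde g\tilde h}$, so $E$ is not multiplicative in the usual sense.

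\emph{Mikami--Weinstein is the wrong template.} It reduces at level $0$ a momentum map that is a cocycle into the group $\mathfrak g^*$, for an action by automorphisms. That is precisely the classical picture the paper contrasts with. Here $\mu$ is a morphism to the unit groupoid $\mathbb R\rightrightarrows\mathbb R$, not to the group $\mathbb R$.

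\emph{The level set is not wide.} Since $\mu(\UC(\alpha))=\langle\alpha,\varepsilon\rangle$, the level set $\mu^{-1}(1)=\SC^{-1}(N_1)=\TC^{-1}(N_1)$ is the restriction subgroupoid over $N_1=\{\langle\alpha,\varepsilon\rangle=1\}\cong A^*$. No quotient of the base occurs, because the action on $\tilde A^*$ is trivial.

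With these repairs, multiplicativity of $\omega_\Gamma$ follows as in the paper from $(\bar p,\bar p)^*\partial\omega_\Gamma=(\bar j,\bar j)^*\partial\omega_{\rm can}=0$, with $(\bar p,\bar p)\colon K^{(2)}\to\Gamma^{(2)}$ a surjective submersion. Nondegeneracy comes from Marsden--Weinstein, not from a dimension count.
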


In Section \ref{sec:Kac-Moody-section}, we show that the deformation arising
from the Kac-Moody extension is geometrically nontrivial. As the reduction
level varies, the resulting Lie groupoids interpolate between two
non-Morita-equivalent presentations, namely the coadjoint action groupoid and
the gauge action groupoid. At the level of \(\mathbb S^1\)-gerbes, presented
by the central extensions of Lie groupoids
\[
\mu^{-1}(r)\longrightarrow \mu^{-1}(r)/\mathbb S^1, \quad r\in\mathbb{R},
\]
the gerbe family interpolates between a gerbe with trivial Dixmier--Douady
class and the basic gerbe over \([G/G]\). When \(G\) is compact, simple, and
simply connected, the Dixmier--Douady class of the latter represents the
generator of \(H^3_G(G;\mathbb Z)\).

Finally, we discuss applications to affine Poisson geometry. In the presence
of an \(\mathbb S^1\)-central extension of Lie groupoids, the corresponding
deformation induces an affine Poisson structure on \(A^*\), and this affine
Poisson structure is integrated by the reduced symplectic groupoid constructed
above. Moreover, the reduced groupoid carries natural prequantization data.

\vspace{1em}

\noindent\textbf{Conventions.}
Several Lie groupoids are involved in this article.
We shall adopt the following list of conventions for various Lie groupoids:
\begin{itemize}
    \item Structure maps of $\mathcal{G}$: $\SG$, $\TG$, $\MG$, $\UG$, $\IG$.
    \item Structure maps of $\tilde{\mathcal{G}}$: $\ST$, $\TT$, $\MT$, $\UT$, $\IT$.
    \item Structure maps of $T^*\tilde{\mathcal{G}}$: $\SC$, $\TC$, $\MC$, $\UC$, $\IC$.
    \item Structure maps of $T^*\mathcal{G}$: $\SG_0$, $\TG_0$, $\MG_0$, $\UG_0$, $\IG_0$.
    \item Structure maps of $\Gamma$: $\hat\SG$, $\hat\TG$, $\hat\MG$, $\hat\UG$, $\hat\IG$.
\end{itemize}
Given a Lie groupoid $\mathcal{G}\rightrightarrows M$, sometimes we use $gh$ to denote the multiplication $\MG(g,h)$, $g^{-1}$ to denote the inverse $\IG(g)$, and  $1_x$ to 
denote the unit $\UG(x)$, for all  $g,h\in\mathcal{G}$ and $x\in M.$

\section{Deforming the cotangent groupoid}\label{sec:deform-grpd}

For a Lie groupoid $\mathcal{G}\rightrightarrows M$, there is a naturally associated groupoid $T^*\mathcal{G}\rightrightarrows A^*$, known as the \textit{cotangent groupoid} \cite{MR0996653}, where $A^*$ denotes the dual of the Lie algebroid of $\mathcal{G}$. In this section, we introduce a special deformation of the standard groupoid structure on $T^*\mathcal{G}$, termed the \textit{affine deformation}.  

\subsection{$\gamma$-deformed cotangent groupoids}
Let $\SG,\TG,\MG,\UG,\IG$ denote the structure maps of $\mathcal{G}\rightrightarrows M$, see Definition \ref{Def:groupoid} for details.
Define the space of composable pairs by $\mathcal{G}^{(2)}:= \mathcal{G}_{\SG}\times_{\TG}\mathcal{G}$, and fix a 1-form $\gamma \in \Omega^1(\mathcal{G}^{(2)})$. We then construct a deformation of the cotangent groupoid $T^*\mathcal{G}$ governed by $\gamma$.

Let $\SG_0,\TG_0,\MG_0,\UG_0,\IG_0$ stand for the structure maps of the canonical cotangent groupoid $T^*\mathcal{G}\rightrightarrows A^*$, as characterized in Theorem \ref{thm:CDW-formula}. Using the 1-form $\gamma$, we equip $T^*\mathcal{G}$ with a new groupoid structure specified as follows:
\begin{enumerate}
\item The source map $\hat{\SG}\colon T^*\mathcal{G}\to A^*$ is defined by
    \begin{equation}\label{eq:groupoid-def-1}
\langle \hat{\SG}(\xi), a\rangle:=\langle {\SG}_0(\xi), a\rangle+\gamma(0_k,\IG_*a),  \end{equation}
where $k \in \mathcal{G}$,  $\xi \in T^*_k\mathcal{G}$,  $a\in A_{\SG(k)}$ and $(0_k,\IG_*a) \in T_{(k,\UG\SG(k))}\mathcal{G}^{(2)}$.
\item The target map $\hat{\TG}\colon T^*\mathcal{G}\to A^*$ is defined by
\begin{equation}\label{eq:groupoid-def-2}
\langle \hat{\TG}(\xi), b\rangle :=\langle {\TG}_0(\xi), b\rangle-\gamma(b,0_k),   
\end{equation}
where $k \in \mathcal{G}$,  $\xi \in T^*_k\mathcal{G}$,  $b\in A_{\TG(k)}$ and $(b,0_k) \in T_{(\UG\TG(k),k)}\mathcal{G}^{(2)}$.
\item The multiplication map $\hat{\MG}\colon T^*\mathcal{G}^{(2)}\to T^*\mathcal{G}$ is determined by
\begin{equation}\label{eq:groupoid-def-3}
\langle \hat{\MG}(\xi_1,\xi_2),\MG_*(w_1,w_2)\rangle := \langle\xi_1,w_1\rangle+ \langle \xi_2,w_2\rangle+\gamma(w_1,w_2),    
\end{equation}
where $k_1, k_2 \in \mathcal{G}$ with $\SG(k_1) = \TG(k_2)$, and $\xi_1\in T^*_{k_1}\mathcal{G}, \xi_2\in T^*_{k_2}\mathcal{G}$ with $\hat{\SG}(\xi_1)=\hat{\TG}(\xi_2)$, and $w_1\in T_{k_1}\mathcal{G}, w_2\in T_{k_2}\mathcal{G}$ with ${\SG}_*(w_1)={\TG}_*(w_2)$.
\item The unit map $\hat{\UG}\colon A^*\to T^*\mathcal{G}$ is defined as 
\begin{equation}\label{eq:unit-map}
    \hat{\UG}=\UG_0.
\end{equation}
\item The inverse map $\hat{\iota}\colon T^*\mathcal{G}\to T^*\mathcal{G}$ is defined by
\begin{equation}\label{eq:inverse-map}
\langle \hat{\iota}(\xi), w\rangle:=\langle\iota_0(\xi),w\rangle - \gamma(\iota_*w,w),  
\end{equation}
where $k \in \mathcal{G}$,  $\xi \in T^*_k\mathcal{G}$ and $w\in T_{\iota(k)}\mathcal{G}$.
\end{enumerate}

%Here and below, expressions such as $\gamma(0_k,\iota_*a)$ are evaluated at the corresponding point of $\mathcal{G}^{(2)}$. For instance, $(0_k,\IG_*a) \in T_{(k,\UG(\SG(k)))}\mathcal{G}^{(2)}$.

\begin{defn}
We call $T^*\mathcal{G}$  a \textbf{$\gamma$-deformed cotangent groupoid}, if $T^*\mathcal{G}$ is a Lie groupoid over $A^*$ with structure maps $(\hat{\SG},\hat{\TG},\hat{\MG}, \hat{\UG},\hat{\iota})$ as defined above.
\end{defn}

We now establish the following criterion for deformed cotangent groupoids.

\begin{thm}\label{thm:groupoid-cond}
A 1-form $\gamma\in \Omega^1(\mathcal{G}^{(2)})$ makes $T^*\mathcal{G}$ a $\gamma$-deformed cotangent groupoid if and only if all the following conditions hold:
\begin{itemize}
    \item[(1)] Cocycle condition: $\partial \gamma = 0$,
    \item[(2)] Degeneracy normalization: $(\sigma_0)^*\gamma = 0$ and $(\sigma_1)^*\gamma = 0$.
\end{itemize}
Here, $\partial$ is the groupoid differential. Further, $\sigma_0$, $\sigma_1$ are the degeneracy maps $\mathcal{G}\rightarrow \mathcal{G}^{(2)}$ given by
$
\sigma_0(g) = (\UG({\TG(g)}),g)$ and $\sigma_1(g) = (g,\UG(\SG(g))).
$
\end{thm}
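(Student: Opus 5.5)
The approach is to treat the deformed structure as the ordinary cotangent groupoid twisted by $\gamma$, and to check each groupoid axiom by evaluating both sides on tangent vectors. The key device is the bijection $\MG_*\colon T_{(k_1,k_2)}\mathcal G^{(2)}\to T_{k_1k_2}\mathcal G$ modulo its kernel. In the undeformed case (Theorem \ref{thm:CDW-formula}), the formula $\langle\MG_0(\xi_1,\xi_2),\MG_*(w_1,w_2)\rangle=\langle\xi_1,w_1\rangle+\langle\xi_2,w_2\rangle$ is well defined exactly because $\SG_0(\xi_1)=\TG_0(\xi_2)$. Here the kernel of $\MG_*$ at $(k_1,k_2)$ consists of pairs $(R_{k_1}\text{-type } -a\cdot k_1,\ a\cdot k_2)$, and more precisely it is spanned by vectors of the form $(w_1,w_2)=(\,\cdot\,)$ coming from $a\in A_{\SG(k_1)}$. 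So the first step is to verify that $\hat\MG$ in \eqref{eq:groupoid-def-3} is \emph{well defined} precisely when $\hat\SG(\xi_1)=\hat\TG(\xi_2)$.

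This means checking that for $(w_1,w_2)$ in $\ker \MG_*$, the quantity $\langle\xi_1,w_1\rangle+\langle\xi_2,w_2\rangle+\gamma(w_1,w_2)$ vanishes if and only if the corrected source and target agree. Taking the kernel vector built from $a$, this condition amounts to
\[
\gamma(w_1^a,w_2^a)=\gamma(0_{k_1},\IG_*a)+\gamma(a,0_{k_2})
\]
modulo the terms already accounted for. This identity does not hold for arbitrary $\gamma$. I would derive it from $\partial\gamma=0$, evaluated at the triple $(k_1,1_{\SG(k_1)},k_2)$ on the tangent vector $(w_1^a,\ \cdot\ ,w_2^a)$, together with the normalizations $\sigma_i^*\gamma=0$. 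The normalizations are used to kill terms such as $\gamma(0,0)$ along units and $\gamma$ restricted to degenerate simplices.

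The remaining axioms are then handled in order:
\begin{enumerate}
\item $\hat\SG\circ\hat\MG=\hat\SG\circ\mathrm{pr}_2$ and the analogous target identity.
\item Unit laws with $\hat\UG=\UG_0$.
\item Associativity.
\item Inverses.
\end{enumerate}

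Associativity is the clean step. Pairing $\hat\MG(\hat\MG(\xi_1,\xi_2),\xi_3)$ and $\hat\MG(\xi_1,\hat\MG(\xi_2,\xi_3))$ against $\MG_*^{(3)}(w_1,w_2,w_3)$, the two sides differ by
\[
\gamma(w_1,w_2)+\gamma(w_1w_2,w_3)-\gamma(w_2,w_3)-\gamma(w_1,w_2w_3),
\]
which is $-(\partial\gamma)(w_1,w_2,w_3)$. Since $\MG^{(3)}_*$ is surjective onto the relevant tangent spaces, associativity holds if and only if $\partial\gamma=0$.

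Similarly, the unit law $\hat\MG(\UG_0\hat\TG(\xi),\xi)=\xi$ reduces, via decomposing $T_{1_x}\mathcal G=T_xM\oplus A_x$, to the vanishing of $\gamma$ on vectors $(\UG_*v,w)$ with $w$ tangent at $k$. The component along $A$ is absorbed by the correction in $\hat\TG$. The component along $\UG_*TM$ is exactly $\sigma_0^*\gamma$, so this unit law is equivalent to $\sigma_0^*\gamma=0$. The symmetric argument gives $\sigma_1^*\gamma=0$.

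For the inverse, I would verify $\hat\MG(\xi,\hat\IG\xi)=\UG_0\hat\TG(\xi)$ by pairing with $\MG_*(w,\IG_*w')$. The correction $-\gamma(\IG_*w,w)$ in \eqref{eq:inverse-map} is designed to cancel $\gamma(w,\IG_*w)$, again using the cocycle identity on $(k,k^{-1},k)$. Smoothness of all maps and the submersion property of $\hat\SG,\hat\TG$ are immediate, since they are affine perturbations of $\SG_0,\TG_0$ along fibres.

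For necessity, each axiom above was shown to be equivalent to, not merely implied by, the corresponding condition: associativity forces $\partial\gamma=0$, and the two unit laws force the two normalizations.

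The main obstacle I expect is the first step, matching the $\ker\MG_*$ computation with the specific source and target corrections. It requires carefully identifying the kernel vectors and applying $\partial\gamma=0$ on degenerate triples, with sign bookkeeping. I would first try it at $k_1$ or $k_2$ equal to a unit to fix the conventions before doing the general case.
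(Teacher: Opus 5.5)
Your plan is essentially the paper's proof: the kernel description of $\MG_*$ for well-definedness of $\hat\MG$, associativity $\Leftrightarrow\partial\gamma=0$, the unit laws $\Leftrightarrow(\sigma_0)^*\gamma=(\sigma_1)^*\gamma=0$, and the symmetry $\gamma(\iota_*v,v)=\gamma(v,\iota_*v)$ from the triple $(k,k^{-1},k)$ for the inverse laws. Two repairs are needed. First, with $w_1^a=(L_{k_1})_*\iota_*a$ and $w_2^a=(R_{k_2})_*a$, the required identity is $\gamma(w_1^a,w_2^a)=-\gamma(0_{k_1},\iota_*a)-\gamma(a,0_{k_2})$ (your signs are flipped); it follows from $\partial\gamma$ evaluated on $(0_{k_1},\iota_*a,w_2^a)$ together with $(\sigma_0)^*\gamma=0$ applied to $(\iota_*a,w_2^a)=(\sigma_0)_*w_2^a-(a,0_{k_2})$, whereas on any vector of the form $(w_1^a,\cdot\,,w_2^a)$ the term $\gamma(w_1^a,w_2^a)$ cancels and gives nothing. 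Second, before computing $\hat\MG(\xi,\hat\iota\xi)$ you must check $\hat\TG\circ\hat\iota=\hat\SG$ and $\hat\SG\circ\hat\iota=\hat\TG$, since otherwise $(\xi,\hat\iota\xi)$ is not known to be composable; these reduce to the same kernel identity at $(k,k^{-1})$, respectively at $(k^{-1},k)$ combined with the symmetry.
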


\begin{remark}
If $\gamma$ satisfies the two conditions above, then for $r\in \mathbb{R}$, the family $(T^*\mathcal{G})_{(r\gamma)}$ forms a strict deformation of Lie groupoids, in the sense of \cite{MR4176835}. Since the structure maps depend affinely on the parameter $r$, we refer to this as an \textit{affine deformation of the cotangent groupoid}.
\end{remark}

We divide the proof of Theorem \ref{thm:groupoid-cond} into conditions for different classes of groupoid axioms. We first derive the condition for $T^*\mathcal{G}$ being a semigroupoid. Here, a semigroupoid means that all the groupoid identities involving $\hat{\SG},\hat{\TG},\hat{\MG}$ hold, while the existence of the unit map and the inverse map is not required. We need the following lemma.

\begin{lemma}\label{prop:elementinkernel}
For any \(k_1, k_2 \in \mathcal{G}\) satisfying \(\SG(k_1) = \TG(k_2)\), and any tangent vectors \(w_1\in T_{k_1}\mathcal{G}\), \(w_2\in T_{k_2}\mathcal{G}\) with \(\SG_*(w_1)=\TG_*(w_2)\), then \(\MG_*(w_1,w_2) = 0\), if and only if there exists some \(a\in A_{\SG(k_1)}\) such that
\[
w_1=(L_{k_1})_*\iota_*a \quad \text{and} \quad w_2=(R_{k_2})_*a,
\]
where $L_{k_1}$ is the left multiplication by $k_1$ and $R_{k_2}$ is the right multiplication by $k_2.$
\end{lemma}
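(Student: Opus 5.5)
The plan is to reduce to a dimension count combined with one explicit family of vectors in the kernel. Write $x=\SG(k_1)=\TG(k_2)$. The composable tangent vectors at $(k_1,k_2)$ form $T_{(k_1,k_2)}\mathcal{G}^{(2)}$, and the multiplication map $\MG\colon\mathcal{G}^{(2)}\to\mathcal{G}$ is a surjective submersion. Hence $\ker(\MG_*)_{(k_1,k_2)}$ has dimension $\dim\mathcal{G}^{(2)}-\dim\mathcal{G}=(2\dim\mathcal{G}-\dim M)-\dim\mathcal{G}=\dim\mathcal{G}-\dim M=\operatorname{rank}A$. It therefore suffices to show two things: the linear map $\Phi\colon A_x\to T_{(k_1,k_2)}\mathcal{G}^{(2)}$, $a\mapsto((L_{k_1})_*\IG_*a,(R_{k_2})_*a)$, is well defined and injective; and its image lies in $\ker\MG_*$.

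For well-definedness, regard $a\in A_x$ as a tangent vector at $1_x$ in $\ker\SG_*$. Then $\IG_*a$ is tangent at $1_x$ with $\TG_*\IG_*a=\SG_*a=0$ and $\SG_*\IG_*a=\TG_*a$. Since $L_{k_1}\colon \TG^{-1}(x)\to\TG^{-1}(\TG(k_1))$ is defined on the $\TG$-fiber, the vector $(L_{k_1})_*\IG_*a\in T_{k_1}\mathcal{G}$ makes sense. Likewise $R_{k_2}$ acts on $\SG^{-1}(x)$, so $(R_{k_2})_*a\in T_{k_2}\mathcal{G}$. Composability of the pair also holds: $\SG\circ L_{k_1}=\SG$ on the $\TG$-fiber, so $\SG_*(L_{k_1})_*\IG_*a=\SG_*\IG_*a=\TG_*a$. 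Similarly $\TG\circ R_{k_2}=\TG$, so $\TG_*(R_{k_2})_*a=\TG_*a$. Injectivity follows because $R_{k_2}$ is a diffeomorphism between $\SG$-fibers, so $(R_{k_2})_*a=0$ forces $a=0$.

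To show the image lies in the kernel, choose a curve $\epsilon(t)$ in $\SG^{-1}(x)$ with $\epsilon(0)=1_x$ and $\dot\epsilon(0)=a$. Then $(k_1\epsilon(t)^{-1},\epsilon(t)k_2)$ is a curve in $\mathcal{G}^{(2)}$, since $\SG(k_1\epsilon(t)^{-1})=\TG(\epsilon(t))=\TG(\epsilon(t)k_2)$. Its velocity at $t=0$ is exactly $\Phi(a)$, and its image under $\MG$ is the constant $k_1\epsilon(t)^{-1}\epsilon(t)k_2=k_1k_2$. Hence $\MG_*\Phi(a)=0$. Combining this with injectivity and the dimension count gives $\operatorname{im}\Phi=\ker\MG_*$, which is both directions of the lemma.

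The main obstacle is the dimension count, specifically justifying that $\MG$ is a submersion. This follows from the factorization $\MG(g,h)=R_h(g)$: for fixed $h$, right translation $R_h\colon \SG^{-1}(\TG(h))\to\SG^{-1}(\SG(h))$ is a diffeomorphism, and moving $h$ covers the transverse directions. Alternatively, one can use the section $g\mapsto(g,1_{\SG(g)})$ of $\MG$, which already shows that $\MG_*$ is surjective. The remaining step is to check that $\mathcal{G}^{(2)}$ has the stated dimension, which holds because it is a fiber product over the submersions $\SG$ and $\TG$.
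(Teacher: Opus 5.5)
Your proof is correct, and it differs from the paper's in how the ``only if'' direction is obtained.

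The paper argues directly. Since $\SG\circ\MG=\SG\circ\mathrm{pr}_2$ and $\TG\circ\MG=\TG\circ\mathrm{pr}_1$, the hypothesis $\MG_*(w_1,w_2)=0$ gives $\SG_*w_2=0$ and $\TG_*w_1=0$. Hence $w_2=(R_{k_2})_*a$ and $w_1=(L_{k_1})_*\iota_*b$ for some $a,b\in A_{\SG(k_1)}$. Composability gives $\TG_*a=\TG_*b$, and the vanishing of the product is then used to force $a=b$. That last step is left implicit; it amounts to $\MG_*\bigl((L_{k_1})_*\iota_*b,(R_{k_2})_*a\bigr)=(L_{k_1})_*(R_{k_2})_*(a-b)$ with $\TG_*(a-b)=0$. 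The converse is only said to be ``similar''.

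You replace the identification $a=b$ by linear algebra. The curve $(k_1\epsilon(t)^{-1},\epsilon(t)k_2)$ shows in one line that your injective map $\Phi$ lands in $\ker\MG_*$, which also gives the converse explicitly. The dimension count $\dim\ker(\MG_*)_{(k_1,k_2)}=\operatorname{rank}A$ then upgrades this to $\operatorname{im}\Phi=\ker\MG_*$, with uniqueness of $a$ for free. The paper's route, in exchange, is constructive ($a=(R_{\iota(k_2)})_*w_2$). It also uses neither the submersion property of $\MG$ nor finite-dimensionality, so it would survive, e.g., for the infinite-dimensional loop groups in the Kac--Moody example.

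One small caution on your justification that $\MG$ is a submersion. The section $g\mapsto(g,1_{\SG(g)})$ only proves surjectivity of $\MG_*$ at pairs whose second entry is a unit, and the rank of $\MG_*$ is not known in advance to be constant. Your first argument is the right one. Equivalently, $(g,h)\mapsto(gh,h)$ is a diffeomorphism from $\mathcal{G}^{(2)}$ onto $\{(k,h):\SG(k)=\SG(h)\}$ that turns $\MG$ into the first projection, which is a submersion because $\SG$ is.
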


\begin{proof}
Since $\SG_*w_2 = \SG_*\MG_*(w_1,w_2)$, we have $\SG_*w_2 = 0$. 
Hence $(R_{\iota(k_2)})_*w_2 \in A_{\SG(k_1)}$, and there exists 
$a \in A_{\SG(k_1)}$ such that $w_2 = (R_{k_2})_*a$.

Similarly, from $\TG_*w_1 = \TG_*\MG_*(w_1,w_2)$, we obtain 
$\SG_*\iota_*w_1 = 0$. It follows that 
$(R_{k_1})_*\iota_*w_1 \in A_{\SG(k_1)}$, so there exists 
$b \in A_{\SG(k_1)}$ such that $w_1 = (L_{k_1})_*\iota_*b$.

On the other hand, the condition $\SG_*(w_1) = \TG_*(w_2)$ implies 
$\TG_*a = \TG_*b$. Combining this with the assumption
\[
\MG_*\big((L_{k_1})_*\iota_*b,(R_{k_2})_*a\big) = 0,
\]
we conclude that $a = b$. The converse follows by a similar argument.
\end{proof}

\begin{propn}\label{propn:semi-groupoid}
Endowed with the structure maps $\hat{\SG}$, $\hat{\TG}$, $\hat{\MG}$ defined above, 
$T^*\mathcal{G}$ is a semigroupoid if and only if
\begin{equation}\label{eq:semigroupoid-cond}
    \partial \gamma = 0, 
    \quad\text{and}\quad 
    \gamma(\iota_*a,a)=0, \quad\text{for all } a\in A.
\end{equation}
\end{propn}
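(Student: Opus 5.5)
The semigroupoid axioms involving $\hat{\SG},\hat{\TG},\hat{\MG}$ are four. First, $\hat{\MG}$ must be well defined, i.e. the right side of \eqref{eq:groupoid-def-3} depends only on $\MG_*(w_1,w_2)$. Second, $\hat{\SG}(\hat{\MG}(\xi_1,\xi_2))=\hat{\SG}(\xi_2)$. Third, $\hat{\TG}(\hat{\MG}(\xi_1,\xi_2))=\hat{\TG}(\xi_1)$. Fourth, multiplication must be associative. I will translate each into a condition on $\gamma$ and show that together they amount exactly to \eqref{eq:semigroupoid-cond}.

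\textbf{Well-definedness.} The right-hand side of \eqref{eq:groupoid-def-3} is linear in $(w_1,w_2)$, so it descends along $\MG_*$ iff it vanishes on $\ker \MG_*$. By Lemma \ref{prop:elementinkernel}, this kernel consists of the pairs $((L_{k_1})_*\iota_*a,(R_{k_2})_*a)$ with $a\in A_{\SG(k_1)}$. By the Coste--Dazord--Weinstein formulas (Theorem \ref{thm:CDW-formula}),
\[
\langle\xi_1,(L_{k_1})_*\iota_*a\rangle=-\langle \SG_0(\xi_1),a\rangle, \qquad \langle\xi_2,(R_{k_2})_*a\rangle=\langle\TG_0(\xi_2),a\rangle,
\]
up to the paper's sign conventions. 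Imposing $\hat{\SG}(\xi_1)=\hat{\TG}(\xi_2)$ through \eqref{eq:groupoid-def-1}--\eqref{eq:groupoid-def-2} turns the vanishing condition into
\[
\gamma\big((L_{k_1})_*\iota_*a,(R_{k_2})_*a\big)=\gamma(0_{k_1},\iota_*a)+\gamma(a,0_{k_2}).
\]
Here $\gamma$ is evaluated at the points $(k_1,k_2)$, $(k_1,1)$ and $(1,k_2)$. I would write this condition as $(\star)$ and treat it as an identity to be derived from the cocycle condition, not imposed separately.

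\textbf{Source and target compatibility.} For the source condition, pair $\hat{\SG}(\hat{\MG}(\xi_1,\xi_2))$ with $a\in A_{\SG(k_2)}$. Realize $(R_{k_1k_2})_*$-type vectors as $\MG_*(0_{k_1},v)$ with $v=(L_{k_2})_*\iota_*a$, which satisfies $\SG_*v=0$. Expanding with \eqref{eq:groupoid-def-3}, compatibility reduces to
\[
\gamma(0_{k_1},(L_{k_2})_*\iota_*a)+\gamma(0_{k_2},\iota_*a)=\gamma(0_{k_1k_2},\iota_*a).
\]
This is precisely $\partial\gamma=0$ evaluated on the triple $(k_1,k_2,1)$ with tangent vector $(0,0,\iota_*a)$, pulled through the face maps. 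The target condition is handled symmetrically and gives $\partial\gamma=0$ on $(1,k_1,k_2)$.

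\textbf{Associativity.} Pair both $\hat{\MG}(\hat{\MG}(\xi_1,\xi_2),\xi_3)$ and $\hat{\MG}(\xi_1,\hat{\MG}(\xi_2,\xi_3))$ with $\MG_*(\MG_*(w_1,w_2),w_3)$. Their difference is
\[
\gamma(w_1,w_2)+\gamma(\MG_*(w_1,w_2),w_3)-\gamma(w_2,w_3)-\gamma(w_1,\MG_*(w_2,w_3)),
\]
which is $\pm(\partial\gamma)(w_1,w_2,w_3)$. So associativity is equivalent to $\partial\gamma=0$ on $\mathcal{G}^{(3)}$, because every tangent vector to $\mathcal{G}^{(3)}$ arises as such a triple.

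\textbf{Assembling the equivalence.} It remains to reconcile $(\star)$ with \eqref{eq:semigroupoid-cond}, and I expect this to be the main obstacle. Apply $\partial\gamma=0$ at the point $(k_1,1_x,k_2)$ with tangent vector $(0_{k_1},\iota_*a\text{-lift},0_{k_2})$, and a second instance with $a$ on the other side. This should express $\gamma((L_{k_1})_*\iota_*a,(R_{k_2})_*a)$ as $\gamma(0_{k_1},\iota_*a)+\gamma(a,0_{k_2})$ plus a correction term $\gamma(\iota_*a,a)$ at $(1_x,1_x)$. The difficulty is choosing decompositions of the vectors compatible with the fibered-product constraints so that the face maps produce exactly these terms.

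Granting that decomposition, the argument closes in both directions. Given $\partial\gamma=0$, condition $(\star)$ is equivalent to $\gamma(\iota_*a,a)=0$, so \eqref{eq:semigroupoid-cond} implies all four axioms. Conversely, the axioms imply $\partial\gamma=0$ by associativity, and then $(\star)$ specialized to $k_1=k_2=1_x$ forces $\gamma(\iota_*a,a)=0$.
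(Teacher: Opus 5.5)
Your outline matches the paper's: Lemma~\ref{prop:elementinkernel} reduces well-definedness of $\hat{\MG}$ to a pairing on $\ker\MG_*$. Source and target compatibility are $\partial\gamma=0$ on $(0_{k_1},0_{k_2},\iota_*a)$ and $(b,0_{k_1},0_{k_2})$, and associativity is $(\partial\gamma)(w_1,w_2,w_3)=0$. Those parts are fine.

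The gap is the step you grant: that under $\partial\gamma=0$ the kernel pairing equals $\gamma(\iota_*a,a)$ at an arbitrary composable pair $(k_1,k_2)$. This is the only place the second condition in \eqref{eq:semigroupoid-cond} enters, so without it the ``if'' direction is unproved. Your tentative vector also fails: $(0_{k_1},\iota_*a,0_{k_2})$ is not tangent to $\mathcal{G}^{(3)}$ at $(k_1,1_x,k_2)$. The second composability constraint demands $\SG_*\iota_*a=\TG_*0_{k_2}$, i.e.\ $\rho(a)=0$.

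The missing idea is to choose admissible triples in which one face lands in $\ker\MG_*$. Evaluate $\partial\gamma$ on $(0_{k_1},\iota_*a,(R_{k_2})_*a)$ at $(k_1,1_x,k_2)$, and on $(\iota_*a,a,0_{k_2})$ at $(1_x,1_x,k_2)$. By Lemma~\ref{prop:elementinkernel}, $\MG_*(\iota_*a,(R_{k_2})_*a)=0$ and $\MG_*(\iota_*a,a)=0$, so one face term in each relation drops out. Eliminating the common term $\gamma(\iota_*a,(R_{k_2})_*a)$ then gives
\[
\gamma\big((L_{k_1})_*\iota_*a,(R_{k_2})_*a\big)+\gamma(0_{k_1},\iota_*a)+\gamma(a,0_{k_2})=\gamma(\iota_*a,a).
\]

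Watch the signs. On composable pairs, $\langle\xi_1,(L_{k_1})_*\iota_*a\rangle+\langle\xi_2,(R_{k_2})_*a\rangle=+\gamma(0_{k_1},\iota_*a)+\gamma(a,0_{k_2})$. So the correct form of $(\star)$ says the left-hand side above vanishes. Your $(\star)$ and your guessed identity both carry a flipped sign, and the two errors happen to cancel.

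With this identity both directions close at once, at any $(k_1,k_2)$, and your specialization to $k_1=k_2=1_x$ in the converse becomes unnecessary. Taken on its own, that specialization would also need $\gamma(0_{1_x},\iota_*a)=\gamma(a,0_{1_x})=0$. These follow from $\partial\gamma=0$ on $(0,0,\iota_*a)$ and $(a,0,0)$ at $(1_x,1_x,1_x)$.
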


\begin{proof}
We first show that Equation \eqref{eq:semigroupoid-cond} implies that $\hat{\MG}$ is well-defined. 
By Lemma~\ref{prop:elementinkernel}, it suffices to verify that the pairing
\begin{equation}\label{eq:pairing-kernel}
\begin{aligned}
&\big\langle \hat{\MG}(\xi_1,\xi_2),\, 
\MG_*\big((L_{k_1})_*\iota_*a,(R_{k_2})_*a\big) \big\rangle \\
&\qquad= \gamma\big((L_{k_1})_*\iota_*a,(R_{k_2})_*a\big) 
+ \langle \xi_1,(L_{k_1})_*\iota_*a\rangle 
+ \langle (R_{k_2})_*a,\xi_2\rangle
\end{aligned}
\end{equation}
vanishes for all $a\in A_{\SG(k_1)}$.

Using Equations \eqref{eq:groupoid-def-1}, \eqref{Eqt:smap-tangentgroupoid}, and 
$\iota_*a = -a + \UG_*\TG_*a$, we compute
\begin{align*}
\langle \hat{\SG}(\xi_1), a\rangle 
&= \langle {\SG}_0(\xi_1), a\rangle + \gamma(0_k,\IG_*a) \\
&= \langle \xi_1, -(L_{k_1})_*\iota_*a\rangle + \gamma(0_k,\IG_*a).
\end{align*}
Similarly, by Equations \eqref{eq:groupoid-def-2} and \eqref{Eqt:tmap-tangentgroupoid},
\[
\langle \hat{\TG}(\xi_2), a\rangle 
= \langle \xi_2, (R_{k_2})_*a\rangle - \gamma(a,0_{k_2}).
\]
Since $\hat{\SG}(\xi_1) = \hat{\TG}(\xi_2)$, we obtain
\begin{equation}\label{eq:xi-relation}
\langle \xi_1,(L_{k_1})_*\iota_*a\rangle 
+ \langle (R_{k_2})_*a,\xi_2\rangle 
= \gamma(0_{k_1},\iota_*a) + \gamma(a,0_{k_2}).
\end{equation}
Substituting \eqref{eq:xi-relation} into \eqref{eq:pairing-kernel} yields
\begin{equation}\label{eq:pairing-simplified}
\begin{aligned}
&\big\langle \hat{\MG}(\xi_1,\xi_2),\, 
\MG_*\big((L_{k_1})_*\iota_*a,(R_{k_2})_*a\big) \big\rangle \\
&\qquad= \gamma\big((L_{k_1})_*\iota_*a,(R_{k_2})_*a\big) 
+ \gamma(0_{k_1},\iota_*a) + \gamma(a,0_{k_2}).
\end{aligned}
\end{equation}

Now apply the cocycle condition $\partial\gamma=0$ together with 
Lemma~\ref{prop:elementinkernel}. We have
\begin{equation}\label{eq:cocycle-1}
\begin{aligned}
0 &= (\partial\gamma)(0_{k_1},\iota_*a,(R_{k_2})_*a) \\
  &= \gamma(\iota_*a,(R_{k_2})_*a) 
   - \gamma\big((L_{k_1})_*\iota_*a,(R_{k_2})_*a\big) 
   - \gamma(0_{k_1},\iota_*a),
\end{aligned}
\end{equation}
and
\begin{equation}\label{eq:cocycle-2}
\begin{aligned}
0 &= (\partial\gamma)(\iota_*a,a,0_{k_2}) \\
  &= \gamma(a,0_{k_2}) + \gamma(\iota_*a,(R_{k_2})_*a) - \gamma(\iota_*a,a).
\end{aligned}
\end{equation}
Combining Equations \eqref{eq:cocycle-1}, \eqref{eq:cocycle-2} with \eqref{eq:pairing-simplified}, 
we arrive at
\[
\big\langle \hat{\MG}(\xi_1,\xi_2),\, 
\MG_*\big((L_{k_1})_*\iota_*a,(R_{k_2})_*a\big) \big\rangle 
= \gamma(\iota_*a,a).
\]
Hence, if $\gamma(\iota_*a,a)=0$ for all $a\in A_{\SG(k_1)}$, then $\hat{\MG}$ is well-defined. Using $\partial \gamma=0$ again, it follows that $\hat{\MG}$ is associative.

It remains to verify the semigroupoid axioms. 
Under Equation \eqref{eq:semigroupoid-cond}, we check that 
$\hat{\SG}(\hat{\MG}(\xi_1,\xi_2))=\hat{\SG}(\xi_2)$ and 
$\hat{\TG}(\hat{\MG}(\xi_1,\xi_2))=\hat{\TG}(\xi_1)$. 
For the first identity, a direct computation gives
\begin{equation*}
\begin{aligned}
\langle \hat{\SG}(\hat{\MG}(\xi_1,\xi_2)), a\rangle 
&= \langle \hat{\SG}(\xi_2),a\rangle 
+ \gamma(0_{k_1k_2},\iota_*a) 
- \gamma(0_{k_1},(L_{k_2})_*\iota_*a) 
- \gamma(0_{k_2},\iota_*a).
\end{aligned}
\end{equation*}
The last three terms vanish because $(\partial\gamma)(0_{k_1},0_{k_2},\iota_*a)=0$. 
The second identity follows analogously from 
$(\partial\gamma)(b,0_{k_1},0_{k_2})=0$ for $b\in A_{\TG(k_1)}$.

Thus Equation \eqref{eq:semigroupoid-cond} implies that $T^*\mathcal{G}$ is a semigroupoid. 
Conversely, if $T^*\mathcal{G}$ carries a semigroupoid structure, then associativity of $\hat{\MG}$ forces the cocycle condition 
$\partial\gamma=0$.
Furthermore, the well-definedness of $\hat{\MG}$ yields $\gamma(\iota_*a,a)=0$ for all $a\in A_{\SG(k_1)}$.
\end{proof}

For the unit and inverse axioms to hold, we need additional conditions.

\begin{propn}\label{propn:unit-cond}
Assume Condition \eqref{eq:semigroupoid-cond} holds. Then the unit axioms for 
$\hat{\UG}$ are equivalent to
\begin{equation}\label{eq:unit-cond}
(\sigma_0)^*\gamma = 0, \qquad (\sigma_1)^*\gamma = 0.
\end{equation}
\end{propn}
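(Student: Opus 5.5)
We need to show that, given \eqref{eq:semigroupoid-cond}, the unit axioms hold exactly when \eqref{eq:unit-cond} holds. The axioms are:
\begin{itemize}
\item[(i)] $\hat{\SG}\circ\hat{\UG}=\mathrm{id}$ and $\hat{\TG}\circ\hat{\UG}=\mathrm{id}$;
\item[(ii)] $\hat{\MG}(\hat{\UG}(\hat{\TG}(\xi)),\xi)=\xi$ and $\hat{\MG}(\xi,\hat{\UG}(\hat{\SG}(\xi)))=\xi$.
\end{itemize}
Since $\hat{\UG}=\UG_0$ and the undeformed maps already satisfy these identities (Theorem~\ref{thm:CDW-formula}), the plan is to compute the $\gamma$-correction in each identity and identify it with a pullback of $\gamma$ under $\sigma_0$ or $\sigma_1$, possibly modulo terms killed by $\partial\gamma=0$.

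\textbf{Source/target of units.} Take $\xi=\UG_0(\alpha)\in T^*_{1_x}\mathcal G$ with $\alpha\in A^*_x$. By \eqref{eq:groupoid-def-1},
\[
\langle\hat{\SG}(\xi),a\rangle=\langle\alpha,a\rangle+\gamma(0_{1_x},\iota_*a).
\]
Here $(0_{1_x},\iota_*a)$ is tangent to $\mathcal G^{(2)}$ at $(1_x,1_x)$. We have $\sigma_0(g)=(1_{\TG(g)},g)$, so $(\sigma_0)_*v=(\UG_*\TG_*v,v)$. For $v=\iota_*a$ we get $\TG_*\iota_*a=\SG_*a=0$, hence $(\sigma_0)_*\iota_*a=(0,\iota_*a)$. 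Therefore $\hat{\SG}\hat{\UG}=\mathrm{id}$ is equivalent to $(\sigma_0^*\gamma)(\iota_*a)=0$ for all $a\in A$. In the same way, $(b,0_{1_x})=(\sigma_1)_*b$ because $\SG_*b=0$, so $\hat{\TG}\hat{\UG}=\mathrm{id}$ is equivalent to $(\sigma_1^*\gamma)(b)=0$. These identities only test $\sigma_i^*\gamma$ on $\ker\SG_*$ or $\ker\TG_*$ along the units, so they do not yet give the full vanishing.

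\textbf{Multiplication by units.} Pair $\hat{\MG}(\UG_0\hat{\TG}(\xi),\xi)$ with $\MG_*(w_1,w_2)$ for $\xi\in T^*_k\mathcal G$. The cleanest choice of representatives is $w_1=\UG_*\TG_*w$ and $w_2=w$, which gives $\MG_*(w_1,w_2)=w$, with $(w_1,w_2)=(\sigma_0)_*w$. Then
\[
\langle\hat{\MG}(\UG_0\hat{\TG}(\xi),\xi),w\rangle=\langle\UG_0\hat{\TG}(\xi),\UG_*\TG_*w\rangle+\langle\xi,w\rangle+(\sigma_0^*\gamma)(w).
\]
The first term vanishes because $\UG_0(\alpha)$ annihilates $T M\subset T\mathcal G|_M$ (this is the standard description of $\UG_0$). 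So the left unit law holds if and only if $\sigma_0^*\gamma=0$ on all of $T_k\mathcal G$, for every $k$. Symmetrically, taking $(w_1,w_2)=(w,\UG_*\SG_*w)=(\sigma_1)_*w$ shows that the right unit law holds if and only if $\sigma_1^*\gamma=0$.

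The pairing is well defined by Proposition~\ref{propn:semi-groupoid}, so this single choice of representative suffices. Both directions of the equivalence follow: \eqref{eq:unit-cond} gives (ii), and by the first step also (i); conversely, (ii) alone forces \eqref{eq:unit-cond}.

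\textbf{Main obstacle.} The delicate step is verifying the two facts used in the second step. First, that $\UG_0(\alpha)$ kills $\UG_*TM$ under the Coste--Dazord--Weinstein conventions, and not some complement. Second, that the composability condition $\hat{\SG}(\UG_0\hat{\TG}\xi)=\hat{\TG}(\xi)$ holds, which requires the first step already to be available. I would settle both by restricting \eqref{eq:unit-cond} to units and using the conventions of Theorem~\ref{thm:CDW-formula}.
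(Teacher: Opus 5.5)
Your argument is correct and is essentially the paper's proof: the corrections to $\hat{\SG}\circ\hat{\UG}$ and $\hat{\TG}\circ\hat{\UG}$ are $\gamma(0_{1_x},\iota_*a)$ and $\gamma(b,0_{1_x})$, both implied by \eqref{eq:unit-cond}. The unit laws are then tested on $\MG_*(\UG_*\TG_*w,w)=w$ and $\MG_*(w,\UG_*\SG_*w)=w$, using that $\UG_0(\alpha)$ annihilates $\UG_*TM$.

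Both points you flag as obstacles are immediate. First, $\langle \UG_0(\alpha),\UG_*X\rangle=\langle\alpha,\UG_*X-\UG_*\SG_*\UG_*X\rangle=0$ by \eqref{Eqt:umap-tangentgroupoid}. Second, composability of the pairs $(\UG_0\hat{\TG}(\xi),\xi)$ and $(\xi,\UG_0\hat{\SG}(\xi))$ is exactly your first step, which \eqref{eq:unit-cond} guarantees in one direction and the assumed unit axioms guarantee in the other.
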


\begin{proof}
We first show that $\hat{\TG}\circ\hat{\UG} = \mathrm{id}_{A^*}$ if and only if 
$\gamma(a,0_{\UG(x)})=0$ for all $a\in A_x$. 
By Equations \eqref{eq:groupoid-def-2} and \eqref{eq:unit-map}, for any $\alpha\in A^*_x$ we have
\[
\langle \hat{\TG}(\hat{\UG}(\alpha)),a\rangle 
= \langle {\TG}_0({\UG}_0(\alpha)),a\rangle - \gamma(a,0_{\UG(x)})
= \langle\alpha,a\rangle - \gamma(a,0_{\UG(x)}).
\]
Hence $\hat{\TG}\circ\hat{\UG} = \mathrm{id}_{A^*}$ precisely when $\gamma(a,0_{\UG(x)})=0$ 
for all $a\in A_x$.

Next, we show that $$\hat{\MG}\big(\xi,\hat{\UG}(\hat{\SG}(\xi))\big)=\xi ,\quad \text{ for all}~\xi\in T^*_k\mathcal{G},$$ if and only if $(\sigma_1)^*\gamma=0$. 
For any $w\in T_k\mathcal{G}$, note that $\MG_*(w,\UG_*\SG_*w)=w$. 
Using Equation \eqref{Eqt:umap-tangentgroupoid}, we compute
\begin{equation*}
\begin{aligned}
 \langle \hat{\MG}(\xi,\hat{\UG}(\hat{\SG}(\xi))),w\rangle &=  \langle \hat{\MG}(\xi,\hat{\UG}(\hat{\SG}(\xi))),\MG_*(w,\UG_*\SG_*w)\rangle \\
 &= \langle\xi,w\rangle+\langle \UG_0(\hat{\SG}(\xi)),\UG_*\SG_*w\rangle + \gamma(w,\UG_*\SG_*w)\\
 &=\langle\xi,w\rangle+\langle \hat{\SG}(\xi),\UG_*\SG_*w-\UG_*\SG_*\UG_*\SG_*w\rangle + \gamma(w,\UG_*\SG_*w)\\
 &=\langle\xi,w\rangle+ \gamma(w,\UG_*\SG_*w).
\end{aligned}
\end{equation*}
Thus $\hat{\MG}(\xi,\hat{\UG}(\hat{\SG}(\xi)))=\xi$ for all $\xi$ if and only if 
$\gamma(w,\UG_*\SG_*w)=0$ for all $w$, which is precisely $(\sigma_1)^*\gamma=0$.

Note that under $(\sigma_1)^*\gamma=0$, the condition $\gamma(a,0_{\UG(x)})=0$ 
follows automatically.

By the same argument, $\hat{\SG}\circ\hat{\UG} = \mathrm{id}_{A^*}$ if and only if 
$\gamma(0_{\UG(x)},\iota_*a)=0$ for all $a\in A_x$, and 
$\hat{\MG}(\hat{\UG}(\hat{\TG}(\xi)),\xi)=\xi$ for all $\xi\in T^*_k\mathcal{G}$ 
if and only if $(\sigma_0)^*\gamma=0$. 
Moreover, $(\sigma_0)^*\gamma=0$ implies $\gamma(0_{\UG(x)},\iota_*a)=0$.

Therefore, the unit axioms hold if and only if both 
$(\sigma_0)^*\gamma=0$ and $(\sigma_1)^*\gamma=0$.
\end{proof}

Now we make the following convenient observation, which shows that the second equation in Condition \eqref{eq:semigroupoid-cond} 
becomes redundant once Condition \eqref{eq:unit-cond} is imposed.
\begin{lemma}\label{lem:inverse-redundant}
Assume $(\sigma_0)^*\gamma = 0$ and $(\sigma_1)^*\gamma = 0$, then $\gamma(\iota_*a,a) = 0$ for all $a\in A$.
\end{lemma}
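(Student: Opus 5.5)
The plan is to write the tangent vector $(\iota_*a,a)$ as a sum of two vectors, one in the image of $(\sigma_0)_*$ and one in the image of $(\sigma_1)_*$, and then use linearity of $\gamma$ on the fibre $T_{(1_x,1_x)}\mathcal{G}^{(2)}$.

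Fix $x\in M$ and $a\in A_x=\ker(\SG_*)|_{1_x}$. Then $(\iota_*a,a)$ is a tangent vector to $\mathcal{G}^{(2)}$ at the point $(1_x,1_x)$. The degeneracy maps satisfy $\sigma_0(1_x)=\sigma_1(1_x)=(1_x,1_x)$, so $(\sigma_0)_*$ and $(\sigma_1)_*$ at $1_x$ both land in $T_{(1_x,1_x)}\mathcal{G}^{(2)}$. Differentiating the definitions gives
\[
(\sigma_0)_*v=(\UG_*\TG_*v,\,v),\qquad (\sigma_1)_*v=(v,\,\UG_*\SG_*v),\qquad v\in T_{1_x}\mathcal{G}.
\]

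Next I apply these to $a$ and to $-a$. Since $\SG_*a=0$, we get
\[
(\sigma_0)_*a=(\UG_*\TG_*a,\,a),\qquad (\sigma_1)_*(-a)=(-a,\,0_{1_x}).
\]
I then invoke the identity $\iota_*a=-a+\UG_*\TG_*a$ for $a\in A$, which was already used in the proof of Proposition \ref{propn:semi-groupoid}. It follows from differentiating $\MG(g,\iota(g))=\UG\TG(g)$ along a curve in the $\SG$-fibre through $1_x$. With this identity,
\[
(\iota_*a,\,a)=(\sigma_0)_*a+(\sigma_1)_*(-a)
\]
as vectors in $T_{(1_x,1_x)}\mathcal{G}^{(2)}$.

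Finally, $\gamma$ is linear on this tangent space, so
\[
\gamma(\iota_*a,a)=\big((\sigma_0)^*\gamma\big)(a)-\big((\sigma_1)^*\gamma\big)(a)=0
\]
by hypothesis. The only point needing care is bookkeeping rather than a real obstacle. One must check that both summands sit over the same base point $(1_x,1_x)$ and are genuinely tangent to $\mathcal{G}^{(2)}$; this is automatic, since they are images under $(\sigma_i)_*$. One must also justify the formula for $\iota_*a$, which is standard.
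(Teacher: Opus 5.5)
Your proof is correct and is essentially the paper's argument. The paper likewise reads off $\gamma(a,0)=0$ from $(\sigma_1)^*\gamma=0$ and $\gamma(\UG_*\TG_*a,a)=0$ from $(\sigma_0)^*\gamma=0$, then applies $\iota_*a=-a+\UG_*\TG_*a$ and linearity of $\gamma$ on $T_{(1_x,1_x)}\mathcal{G}^{(2)}$.
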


\begin{proof}
Since $(\sigma_0)^*\gamma = 0$ and $(\sigma_1)^*\gamma = 0$, we have 
$\gamma(a,0) = 0$ and $\gamma(\UG_*\TG_*a,a) = 0$, for all $a\in A$. 
Using $\iota_*a = -a + \UG_*\TG_*a$, we compute
\[
\gamma(\iota_*a,a) 
= -\gamma(a,0) + \gamma(\UG_*\TG_*a,a) = 0.
\]
\end{proof}

For the inverse axioms, we make use of the following lemma.

\begin{lemma}
Assume that $\sigma^*_0\gamma=0$, $\sigma^*_1\gamma=0$, and $\partial \gamma=0$. Then the following inverse compatibility condition holds:
\begin{equation*}
\gamma(\iota_*v,v) = \gamma(v,\iota_*v), \quad \forall\, v\in T\mathcal{G}.
\end{equation*}
\end{lemma}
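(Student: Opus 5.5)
The plan is to evaluate the cocycle condition $\partial\gamma=0$ on one well-chosen tangent vector to $\mathcal{G}^{(3)}$, namely the triple $(v,\iota_*v,v)$. The two degeneracy normalizations should then kill the two "inner" terms, leaving exactly the desired identity.

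Fix $k\in\mathcal{G}$ and $v\in T_k\mathcal{G}$. First I check that $(v,\iota_*v,v)$ is a tangent vector to $\mathcal{G}^{(3)}$ at $(k,k^{-1},k)$. This follows from $\SG\circ\iota=\TG$ and $\TG\circ\iota=\SG$, which give $\SG_*v=\TG_*\iota_*v$ and $\SG_*\iota_*v=\TG_*v$.

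Next I record two products, obtained by differentiating the identities $\MG(g,\iota(g))=\UG(\TG(g))$ and $\MG(\iota(g),g)=\UG(\SG(g))$ along a curve through $k$ with velocity $v$:
\[
\MG_*(v,\iota_*v)=\UG_*\TG_*v,\qquad \MG_*(\iota_*v,v)=\UG_*\SG_*v .
\]

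With the convention for $\partial$ used in \eqref{eq:cocycle-1}--\eqref{eq:cocycle-2}, namely $(\partial\gamma)(v_1,v_2,v_3)=\gamma(v_2,v_3)-\gamma(\MG_*(v_1,v_2),v_3)+\gamma(v_1,\MG_*(v_2,v_3))-\gamma(v_1,v_2)$, the cocycle condition gives
\[
0=(\partial\gamma)(v,\iota_*v,v)=\gamma(\iota_*v,v)-\gamma(\UG_*\TG_*v,v)+\gamma(v,\UG_*\SG_*v)-\gamma(v,\iota_*v).
\]

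The middle terms vanish by the normalizations. Indeed, $\gamma(\UG_*\TG_*v,v)=((\sigma_0)^*\gamma)(v)=0$ and $\gamma(v,\UG_*\SG_*v)=((\sigma_1)^*\gamma)(v)=0$, since $(\sigma_0)_*v=(\UG_*\TG_*v,v)$ and $(\sigma_1)_*v=(v,\UG_*\SG_*v)$. This leaves $\gamma(\iota_*v,v)=\gamma(v,\iota_*v)$, as claimed.

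The only points needing care are the bookkeeping of base points and the sign convention for $\partial$. The sign convention is in fact harmless: under any alternating convention, the two outer terms $\gamma(\iota_*v,v)$ and $\gamma(v,\iota_*v)$ appear with opposite signs, and the inner terms vanish regardless of their signs.
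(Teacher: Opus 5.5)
Your proposal is correct and matches the paper's proof: both evaluate $\partial\gamma=0$ on the tangent triple $(v,\iota_*v,v)$ at $(k,k^{-1},k)$, rewrite the two inner products as $\UG_*\TG_*v$ and $\UG_*\SG_*v$, and then kill those terms using $\sigma_0^*\gamma=0$ and $\sigma_1^*\gamma=0$. Your explicit check that the triple is tangent to $\mathcal{G}^{(3)}$ is a useful detail that the paper leaves implicit.
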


\begin{proof}
Since $T\mathcal{G}$ is a Lie groupoid, the groupoid multiplication satisfies the identities
\[
\MG_*(v,\iota_*v)=\UG_*\TG_*v,\qquad 
\MG_*(\iota_*v,v)=\UG_*\SG_*v,\quad \forall\, v\in T_g\mathcal{G}.
\]
Evaluate the simplicial cocycle condition $\partial\gamma=0$ on the composable triple $(g,\iota(g),g)$ and the tangent triple $(v,\iota_*v,v)$. This gives
\[
0=\partial \gamma(v,\iota_*v,v)=\gamma(\iota_*v,v)-\gamma\big(\MG_*(v,\iota_*v),v\big)+\gamma\big(v,\MG_*(\iota_*v,v)\big)-\gamma(v,\iota_*v).
\]
By the pullback vanishing assumptions $\sigma_0^*\gamma=0$ and $\sigma_1^*\gamma=0$, we further obtain
\[
\gamma\big(\MG_*(v,\iota_*v),v\big)=\gamma(\UG_*\TG_*v,v)=\sigma_0^*\gamma(v)=0,
\]
\[
\gamma\big(v,\MG_*(\iota_*v,v)\big)=\gamma(v,\UG_*\SG_*v)=\sigma_1^*\gamma(v)=0.
\]
Substituting these vanishing relations into the cocycle identity yields
\[
\gamma(\iota_*v,v) = \gamma(v,\iota_*v),
\]
which completes the proof.
\end{proof}

\begin{propn}\label{prop: inverse-cond}
Assume $(\sigma_0)^*\gamma=0$, $(\sigma_1)^*\gamma=0$ and $\partial \gamma = 0$. Then the inverse axioms for 
$\hat{\iota}$ hold.
\end{propn}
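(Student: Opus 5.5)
The proof will check directly the four inverse identities
\[
\hat{\SG}(\hat{\iota}(\xi))=\hat{\TG}(\xi),\quad \hat{\TG}(\hat{\iota}(\xi))=\hat{\SG}(\xi),\quad \hat{\MG}(\xi,\hat{\iota}(\xi))=\hat{\UG}(\hat{\TG}(\xi)),\quad \hat{\MG}(\hat{\iota}(\xi),\xi)=\hat{\UG}(\hat{\SG}(\xi)).
\]
We may use freely that, by Propositions \ref{propn:semi-groupoid} and \ref{propn:unit-cond} together with Lemma \ref{lem:inverse-redundant}, $\hat{\MG}$ is well defined and associative and $\hat{\UG}$ is a two-sided unit. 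Each identity is the corresponding identity for the undeformed cotangent groupoid (Theorem \ref{thm:CDW-formula}) plus a $\gamma$-correction. So in each case the work is to show that the $\gamma$-terms cancel, using $\partial\gamma=0$, $\sigma_0^*\gamma=\sigma_1^*\gamma=0$, the symmetry $\gamma(\iota_*v,v)=\gamma(v,\iota_*v)$ from the preceding lemma, and the identities $\iota_*\iota_*=\mathrm{id}$ and $\iota_*a=-a+\UG_*\TG_*a$.

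For the source/target identities, let $\xi\in T^*_k\mathcal G$ and $a\in A_{\TG(k)}=A_{\SG(\iota k)}$. Then
\[
\langle\hat{\SG}(\hat\iota\xi),a\rangle=\langle \SG_0(\iota_0\xi),a\rangle-\gamma\big(\iota_*(L_{\iota k})_*\iota_*a,\,(L_{\iota k})_*\iota_*a\big)+\gamma(0_{\iota k},\iota_*a),
\]
where the middle term is the inverse correction evaluated on the vector representing $a$ through $\SG_0$. One has $\iota_*(L_{\iota k})_*\iota_*a=(R_k)_*a$. The cocycle identity $(\partial\gamma)\big(a,0_k,0_{\iota k}\big)$-type evaluations, together with $\sigma_0^*\gamma=\sigma_1^*\gamma=0$ (which kill $\gamma(0,\cdot)$ and $\gamma(\cdot,0)$ at units), should reduce the two correction terms to $-\gamma(a,0_k)$, which is exactly the correction in $\hat{\TG}(\xi)$. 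The other identity is symmetric.

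For $\hat{\MG}(\xi,\hat\iota\xi)$, any vector at $1_{\TG(k)}$ that is composable-decomposable can be written $\MG_*(w_1,\iota_*v)$ with $w_1,v\in T_k\mathcal G$ and $\SG_*w_1=\SG_*v$. Pairing and using \eqref{eq:inverse-map} gives
\[
\langle\xi,w_1-v\rangle+\gamma(w_1,\iota_*v)-\gamma(v,\iota_*v).
\]
The canonical part $\langle\xi,w_1-v\rangle$ equals $\langle\UG_0\TG_0\xi,\MG_*(w_1,\iota_*v)\rangle$. Evaluating $\partial\gamma=0$ on the triple $(w_1,\iota_*v,v)$ and using $\sigma_1^*\gamma=0$ and the symmetry lemma should give
\[
\gamma(w_1,\iota_*v)-\gamma(v,\iota_*v)=-\gamma\big(\MG_*(w_1,\iota_*v),v\big).
\]
It then remains to identify this with the correction $-\gamma(b,0_k)$ coming from $\hat{\TG}$ via \eqref{eq:unit-map}, where $b$ is the $A$-component of $\MG_*(w_1,\iota_*v)$.

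I expect this last identification to be the main obstacle, since $\gamma(\MG_*(w_1,\iota_*v),v)$ a priori depends on $v$. The first thing I would try is to exploit the well-definedness of $\hat{\MG}$: the pairing depends only on $\MG_*(w_1,\iota_*v)$, so we may choose a convenient representative. Given a target vector $b+\UG_*y$, take $v=\UG_*... $-free choices such as $w_1=v+(R_k)_*b$, so that $\MG_*(w_1,\iota_*v)=b+\UG_*\TG_*v$. We then split $\gamma(b+\UG_*\TG_*v,v)$ by linearity into $\gamma(\UG_*\TG_*v,v)=\sigma_0^*\gamma(v)=0$ and $\gamma(b,0_k)$; the latter requires first reducing to $v=0$ on the $b$-slot, which the chosen representative allows. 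The identity $\hat{\MG}(\hat\iota\xi,\xi)=\hat{\UG}(\hat{\SG}\xi)$ follows by the mirror-image argument, using the triple $(v,\iota_*v,w_2)$ and $\sigma_0^*\gamma=0$.
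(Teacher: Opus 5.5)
Your plan is sound, and for the two product identities it takes a genuinely different route from the paper. However, the one explicit formula you write for the source identity has the wrong sign, and with that sign the cancellation you anticipate does not happen.

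Since $a-\UG_*\rho(a)=-\iota_*a$, the source formula reads $\langle \SG_0(\eta),a\rangle=-\langle \eta,(L_{\iota k})_*\iota_*a\rangle$ for $\eta\in T^*_{\iota k}\mathcal G$. The inverse correction therefore enters with a plus sign:
\[
\langle\hat{\SG}(\hat\iota\xi),a\rangle=\langle \SG_0(\iota_0\xi),a\rangle+\gamma\big((R_k)_*a,(L_{\iota k})_*\iota_*a\big)+\gamma(0_{\iota k},\iota_*a).
\]
With this sign your tools close the computation, once you add one ingredient you did not mention. That ingredient is linearity of $\gamma$ along the splitting $\big((R_k)_*a,(L_{\iota k})_*\iota_*a\big)=\big((R_k)_*a,0_{\iota k}\big)+\big(0_k,(L_{\iota k})_*\iota_*a\big)$, where both summands are tangent to $\mathcal G^{(2)}$. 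Put $x=\TG(k)$.
\begin{itemize}
\item From $(\partial\gamma)(a,0_k,0_{\iota k})=0$ and $\gamma(a,0_{1_x})=0$ (by $\sigma_1^*\gamma=0$) you get $\gamma\big((R_k)_*a,0_{\iota k}\big)=-\gamma(a,0_k)$.
\item From $(\partial\gamma)(0_k,0_{\iota k},\iota_*a)=0$ and $\gamma(0_{1_x},\iota_*a)=0$ (by $\sigma_0^*\gamma=0$) you get $\gamma\big(0_k,(L_{\iota k})_*\iota_*a\big)=-\gamma(0_{\iota k},\iota_*a)$.
\end{itemize}
So the corrections sum to $-\gamma(a,0_k)$, as required.

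With your minus sign they would instead sum to $\gamma(a,0_k)+2\gamma(0_{\iota k},\iota_*a)$. This is wrong in general: for $\gamma=\partial\alpha$ with $\UG^*\alpha=0$ it is off by $-2(\alpha+\iota^*\alpha)\big((R_k)_*a\big)$.

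The paper reaches the same cancellation differently. It uses the symmetry lemma to flip $\gamma\big((R_k)_*a,(L_{\iota k})_*\iota_*a\big)$ into the shape of \eqref{eq:cocycle-1}--\eqref{eq:cocycle-2}, whose combination gives $\gamma(\iota_*a,a)=0$. Your route does not need the symmetry lemma at this step.

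Your treatment of $\hat\MG(\xi,\hat\iota\xi)=\hat\UG(\hat\TG\xi)$ is correct. The cocycle on $(w_1,\iota_*v,v)$, together with $\sigma_1^*\gamma=0$ and the symmetry lemma, gives exactly your formula.

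The step you flag as the main obstacle is not one. For any representative, $u=\MG_*(w_1,\iota_*v)$ satisfies $\SG_*u=\TG_*v$. Hence $(u,v)=\big(u-\UG_*\SG_*u,\,0_k\big)+\big(\UG_*\TG_*v,\,v\big)$ is a decomposition in $T\mathcal G^{(2)}$. Then $\sigma_0^*\gamma=0$ gives $\gamma(u,v)=\gamma(b,0_k)$ with $b=u-\UG_*\SG_*u$. You need not choose a convenient representative. The mirror identity goes through as you say; there the final splitting uses $\sigma_1^*\gamma=0$ instead.

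This is where you diverge from the paper. The paper pairs $\hat\MG(\hat\iota\xi,\xi)$ only against unit directions $\UG_*\SG_*v=\MG_*(\iota_*v,v)$, where the correction is $\gamma(\iota_*v,v)-\gamma(v,\iota_*v)=0$. It concludes that the product lies in $\UG_0(A^*)$, and then identifies which unit using $\hat\SG\circ\hat\MG=\hat\SG\circ\mathrm{pr}_2$ and $\hat\SG\circ\hat\UG=\mathrm{id}$.

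The paper's argument is shorter and never computes the $A$-component. Yours is more computational but self-contained: it relies on neither the semigroupoid source-compatibility nor the injectivity of $\hat\SG$ on unit covectors.
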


\begin{proof}
We make use of the inverse compatibility condition in the previous lemma and verify that all inverse axioms hold.

To prove $\hat{\SG}\circ\hat{\iota}=\hat{\TG}$, for any  $\xi\in T^*_k\mathcal{G}$ and 
$b\in A_{\TG(k)}$, we compute
\begin{equation*}
\begin{aligned}
\langle \hat{\SG}(\hat{\iota}(\xi)),b\rangle 
&= \langle \SG_0(\iota_0(\xi)),b\rangle 
   + \gamma((R_k)_*b,(L_{k^{-1}})_*\iota_*b) + \gamma(0_{k^{-1}},\iota_*b) \\
&= \langle \TG_0(\xi),b\rangle - \gamma(b,0_k) 
   + \gamma(b,0_k) + \gamma((R_k)_*b,(L_{k^{-1}})_*\iota_*b) + \gamma(0_{k^{-1}},\iota_*b) \\
&= \langle \hat{\TG}(\xi),b\rangle 
   + \gamma(b,0_k) + \gamma((R_k)_*b,(L_{k^{-1}})_*\iota_*b) + \gamma(0_{k^{-1}},\iota_*b).
\end{aligned}
\end{equation*}
To simplify the last three terms, we use $\gamma(\iota_*v,v)=\gamma(v,\iota_*v)$, which gives
\[
\gamma((R_k)_*b,(L_{k^{-1}})_*\iota_*b) 
= \gamma((L_{k^{-1}})_*\iota_*b,(R_k)_*b).
\]
Here we use the fact $\iota_*(R_k)_*b=(L_{k^{-1}})_*\iota_*b.$
By Equations \eqref{eq:cocycle-1} and \eqref{eq:cocycle-2} in the proof of 
Proposition~\ref{propn:semi-groupoid}, the sum of the last three terms vanishes:
\begin{equation*}
\begin{aligned}
&\gamma((R_k)_*b,(L_{k^{-1}})_*\iota_*b) + \gamma(b,0_k) + \gamma(0_{k^{-1}},\iota_*b) \\
=\;&\gamma((L_{k^{-1}})_*\iota_*b,(R_k)_*b) + \gamma(b,0_k) + \gamma(0_{k^{-1}},\iota_*b) \\
=\;&\gamma(\iota_*b,b) = 0,
\end{aligned}
\end{equation*}
where the last equality follows from Condition \eqref{eq:semigroupoid-cond}. 
Thus $\hat{\SG}\circ\hat{\iota}=\hat{\TG}$. One checks similarly that 
$\hat{\TG}\circ\hat{\iota}=\hat{\SG}$.

It remains to show that $\hat{\MG}(\hat{\iota}(\xi),\xi) = \hat{\UG}(\hat{\SG}(\xi))$. 
Reversing the computation in the first part, we obtain
\[
\langle \hat{\MG}(\hat{\iota}(\xi),\xi), \UG_*\SG_*v\rangle 
= \gamma(\iota_*v,v) - \gamma(v,\iota_*v) = 0.
\]
Hence $\hat{\MG}(\hat{\iota}(\xi),\xi)$ annihilates $\UG_*(TM)$ and therefore lies in 
$\hat{\UG}(A^*) = \UG_0(A^*)$. Since $\hat{\SG}\colon \hat{\UG}(A^*)\to A^*$ is a bijection and
\[
\hat{\SG}\bigl(\hat{\MG}(\hat{\iota}(\xi),\xi)\bigr) 
= \hat{\SG}(\xi),
\]
we conclude that $\hat{\MG}(\hat{\iota}(\xi),\xi) = \hat{\UG}(\hat{\SG}(\xi))$. 
The identity $\hat{\MG}(\xi,\hat{\iota}(\xi)) = \hat{\UG}(\hat{\TG}(\xi))$ follows by an analogous argument.
\end{proof}

\begin{proof}[Proof of Theorem \ref{thm:groupoid-cond}]
If $(T^*\mathcal{G})_{\gamma}$ is a Lie groupoid, then by Proposition \ref{propn:semi-groupoid} and \ref{propn:unit-cond}, we have that $(\sigma_0)^*\gamma=0$, $(\sigma_1)^*\gamma=0$ and $\partial \gamma = 0$.  Conversely, if $\gamma$ satisfies $(\sigma_0)^*\gamma=0$, $(\sigma_1)^*\gamma=0$ and $\partial \gamma = 0$, then by Lemma \ref{lem:inverse-redundant} and Proposition \ref{propn:semi-groupoid}, we have that $(T^*\mathcal{G})_{\gamma}$ is a semigroupoid with well-defined multiplication map. Further, by  Proposition \ref{propn:unit-cond} and \ref{prop: inverse-cond}, we have the unit and inverse axioms. Thus $(T^*\mathcal{G})_{\gamma}$ is a groupoid. It is clear that the structure maps $\hat{\SG},\hat{\TG},\hat{\MG},\hat{\UG},\hat{\IG}$ are smooth maps and that $\hat{\SG},\hat{\TG}$ are submersions, making $(T^*\mathcal{G})_{\gamma}$ a Lie groupoid.
\end{proof}

% \begin{remark}\label{rem:normalize-cond}
% One can naively set $\hat{\UG}:=\UG_0$ and $\hat{\iota}:=\iota_0$, and search for conditions so that $T^*\mathcal{G}$ is a Lie groupoid. In this case, the unit axioms are equivalent to normalization of $\gamma$ by the degeneracy maps
% \begin{equation*}\label{eq: groupoid-cond-2}
% (\sigma_0)^*\gamma = 0,\quad (\sigma_1)^*\gamma = 0    
% \end{equation*}
% where $\sigma_0$, $\sigma_1$ are maps $\mathcal{G}\rightarrow \mathcal{G}^{(2)}$ given by
% $
% \sigma_0(g) = (\UG({\TG(g)}),g)$ and $\sigma_1(g) = (g,\UG(\SG(g))).
% $ The conditions for inverse axioms are more subtle, and a sufficient condition is \begin{equation*}\label{eq:groupoid-cond-3}
%  I^*\gamma = -\gamma, \quad c_s^*\gamma = 0,
% \end{equation*}
% Here, the map $I:\mathcal{G}^{(2)}\rightarrow \mathcal{G}^{(2)}$ is given by $I(g_1,g_2)= (g_2^{-1},g_1^{-1})$  and $c_s: \mathcal{G}\rightarrow \mathcal{G}^{(2)}$ is given by $c_s(g)= (g^{-1},g)$.
% \end{remark}

% It is clear that Condition \eqref{eq:semigroupoid-cond}, together with conditions listed in Remark \ref{rem:normalize-cond} give a sufficient condition for a $\gamma$-deformed cotangent groupoid. We shall, however, proceed with a more conceptual way of constructing such structures. 
\subsection{Example: affine action groupoids}

We next restrict our attention to the case of Lie groups, i.e. $\mathcal{G} = G \rightrightarrows \{*\}.$ In this case, with a specific choice of $\gamma$, we may recover the classical example of the action groupoid $G \times \mathfrak{g}^*\rightrightarrows \mathfrak{g}^*$, corresponding to the affine (coadjoint) action \cite{MR2685337,B-X-Z}.

Recall that an affine action of this form is determined by a  $\mathfrak{g}^*$-valued Lie group 1-cocycle $\chi: G\rightarrow \mathfrak{g}^*$, satisfying
\begin{equation*}\label{Eqt:chi}
	\chi(gh)=\operatorname{Ad}^*_{g^{-1}} \chi(h)+\chi(g),\quad \forall g,h\in G,
\end{equation*}
Then the corresponding affine action is
\begin{equation*}
    g\cdot \eta =\operatorname{Ad}^*_{g^{-1}}\eta+\chi(g)\quad \forall \eta\in \mathfrak{g}^*.
\end{equation*}
When $\chi = 0$, we recover the standard coadjoint action. In this particular case, the action groupoid is identified with the standard cotangent groupoid via left trivialization.
\begin{equation*}
\begin{aligned}
    \phi^L:G\times \mathfrak{g}^* &\rightarrow T^*G,\\
    (g,\eta) &\mapsto (L_{g^{-1}})^*\eta.
\end{aligned}
\end{equation*}
To study $\gamma$-deformed cotangent groupoid of this form, we need to specify the form $\gamma = \gamma^{\chi}$. We define $ \gamma^{\chi} \in \Omega^1(G\times G)$ by
\begin{equation}\label{eq: gamm-chi}
    \gamma^{\chi}(v_1,v_2) = -\langle \chi(g_2),(L_{g_1^{-1}})_*v_1\rangle,\quad \forall v_1 \in T_{g_1}G, v_2 \in T_{g_2}G.
\end{equation}

\begin{propn}\label{prop: deformed-aff-grpd}
Let $G$ be a Lie group and let $\chi:G\rightarrow \mathfrak{g}^*$ be an affine coadjoint 1-cocycle. Under the left trivialization $T^*G \cong G\times \mathfrak{g}^*$, the $\gamma^{\chi}$-deformed cotangent groupoid is precisely the affine action groupoid $G\times \mathfrak{g}^* \rightrightarrows \mathfrak{g}^*$.   
\end{propn}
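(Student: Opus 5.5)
Under the left trivialization $\phi^L(g,\eta)=(L_{g^{-1}})^*\eta$, I will compute each deformed structure map $\hat{\SG},\hat{\TG},\hat{\MG},\hat{\UG},\hat{\IG}$ for $\gamma=\gamma^{\chi}$ and compare it with the affine action groupoid. That groupoid has source $(g,\eta)\mapsto\eta$, target $(g,\eta)\mapsto g\cdot\eta=\operatorname{Ad}^*_{g^{-1}}\eta+\chi(g)$, multiplication $(g,h\cdot\eta)(h,\eta)=(gh,\eta)$, unit $\eta\mapsto(e,\eta)$, and inverse $(g,\eta)\mapsto(g^{-1},g\cdot\eta)$. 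The conditions of Theorem \ref{thm:groupoid-cond} need not be checked separately: once the structure maps agree with those of a known Lie groupoid, the deformed structure is automatically a groupoid. As a sanity check, $(\sigma_0)^*\gamma^\chi=0$ follows from $\chi(e)=0$, $(\sigma_1)^*\gamma^\chi$ vanishes because the second entry is $0$, and $\partial\gamma^\chi=0$ reduces to the cocycle identity for $\chi$.

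For the undeformed maps I will use Theorem \ref{thm:CDW-formula} together with the stated fact that for $\chi=0$, $\phi^L$ identifies $T^*G$ with the coadjoint action groupoid. Thus $\SG_0(\phi^L(g,\eta))=\eta$ and $\TG_0(\phi^L(g,\eta))=\operatorname{Ad}^*_{g^{-1}}\eta$, up to sign conventions I will verify against the formulas in Theorem \ref{thm:CDW-formula}.

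The corrections are then computed one at a time.
\begin{itemize}
\item Source: $\gamma^\chi(0_k,\iota_*a)=-\langle\chi(e),0\rangle=0$, so $\hat{\SG}=\SG_0$.
\item Target: $-\gamma^\chi(b,0_k)=\langle\chi(k),b\rangle$, so $\hat{\TG}(g,\eta)=\operatorname{Ad}^*_{g^{-1}}\eta+\chi(g)=g\cdot\eta$.
\item Unit: $\hat{\UG}=\UG_0$ corresponds to $(e,\eta)$.
\end{itemize}

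For the multiplication, take $\xi_1=\phi^L(g,\eta_1)$ and $\xi_2=\phi^L(h,\eta_2)$ with $\eta_1=h\cdot\eta_2$. I will show that $\hat{\MG}(\xi_1,\xi_2)=\phi^L(gh,\eta_2)$. Write $w_1=(L_g)_*X$ and $w_2=(L_h)_*Y$, so that $\MG_*(w_1,w_2)=(L_{gh})_*(\operatorname{Ad}_{h^{-1}}X+Y)$. The claimed identity then reads
\[
\langle\eta_2,\operatorname{Ad}_{h^{-1}}X+Y\rangle=\langle\eta_1,X\rangle+\langle\eta_2,Y\rangle-\langle\chi(h),X\rangle.
\]
This holds exactly because $\eta_1=\operatorname{Ad}^*_{h^{-1}}\eta_2+\chi(h)$. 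Since $\MG_*$ is onto $T_{gh}G$, the computation determines $\hat{\MG}$ and shows it matches the action groupoid product.

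For the inverse, $\hat{\IG}$ is forced by the groupoid structure, but I will also check it directly. With $w=(L_{g^{-1}})_*Z$ one has $\iota_*w=-(R_g)_*Z$, hence $(L_{g^{-1}})^{-1}_*\iota_*w=-\operatorname{Ad}_gZ$. The correction is then $-\gamma^\chi(\iota_*w,w)=-\langle\chi(g^{-1}),\operatorname{Ad}_gZ\rangle$. Using $\chi(g^{-1})=-\operatorname{Ad}^*_g\chi(g)$, this becomes $\langle\chi(g),Z\rangle$, which adds $\chi(g)$ to $\operatorname{Ad}^*_{g^{-1}}\eta$ as required.

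The main obstacle is bookkeeping with the conventions: the sign and $\operatorname{Ad}$ versus $\operatorname{Ad}^{-1}$ choices in $\TG_0$, and the left-trivialized form of $\MG_*$ and $\iota_*$, all have to be matched consistently with Theorem \ref{thm:CDW-formula}. I will fix these first using the case $\chi=0$, and then perform the deformation computations above.
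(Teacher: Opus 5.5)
Your proposal follows the paper's proof: left-trivialize via \(\phi^L\), observe that the source correction vanishes and the target correction adds \(\chi(g)\), and check that \(\hat{\MG}(\phi^L(g,\eta_1),\phi^L(h,\eta_2))=\phi^L(gh,\eta_2)\) precisely because \(\eta_1=\operatorname{Ad}^*_{h^{-1}}\eta_2+\chi(h)\); your explicit inverse check fills in what the paper leaves as ``similarly.'' There are two small slips to fix: first, the correct left-trivialization is \((L_{g^{-1}})_*\iota_*w=-(L_{g^{-1}})_*(R_g)_*Z=-\operatorname{Ad}_{g^{-1}}Z\), not \(-\operatorname{Ad}_gZ\), and only with \(\operatorname{Ad}_{g^{-1}}Z\) does \(\chi(g^{-1})=-\operatorname{Ad}^*_g\chi(g)\) give \(\langle\chi(g),Z\rangle\); second, your sanity-check reasons are swapped, since \((\sigma_0)^*\gamma^\chi=0\) because the first tangent entry is zero, while \((\sigma_1)^*\gamma^\chi=0\) because \(\chi(e)=0\).
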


\begin{proof}

We shall then describe the structure maps of the deformed groupoid, via the trivialization $\phi^L$. The source map $\hat{\SG}\circ \phi^L:G\times \mathfrak{g}^*\rightarrow \mathfrak{g}^*$ is given by, for any $a\in \mathfrak{g}$,
\begin{equation*}
\begin{aligned}
&\langle \hat{\SG}\circ \phi^L(g,\eta), a\rangle =\langle\SG_0((L_{g^{-1}})^*\eta),a\rangle+\gamma^{\chi}(0_g,\iota_*a)\\
=&\langle {\SG}_0\circ \phi^L(g,\eta), a\rangle = \langle \eta, a\rangle.    
\end{aligned} 
\end{equation*}
The target map $\hat{\TG}\circ \phi^L:G\times \mathfrak{g}^*\rightarrow \mathfrak{g}^*$ is given by, for any $a\in \mathfrak{g}$,
\begin{equation*}
\begin{aligned}
 &\langle \hat{\TG}\circ \phi^L(g,\eta), a\rangle =\langle\TG_0((L_{g^{-1}})^*\eta),a\rangle-\gamma^{\chi}(a,0_g)\\
 =&\langle {\TG}_0\circ \phi^L(g,\eta)+\chi(g), a\rangle = \langle \operatorname{Ad}^*_{g^{-1}}\eta+\chi(g), a\rangle.  
\end{aligned}
\end{equation*}

For two elements $(g_1,\eta_1)$ and $(g_2,\eta_2)$ such that $\hat{\SG}\circ\phi^L(g_1,\eta_1) = \hat{\TG}\circ\phi^L(g_2,\eta_2)$, i.e. $\eta_1 = \operatorname{Ad}^*_{g_2^{-1}}\eta_2+\chi(g_2)$, and for any $v_i\in T_{g_i}G$, we compute the multiplication map
\begin{equation*}
\begin{aligned}
&\langle \hat{\MG}(\phi^L(g_1,\eta_1),\phi^L(g_2,\eta_2)), \MG_*(v_1,v_2)\rangle \\
=& \langle \eta_1, (L_{g_1^{-1}})_*v_1\rangle + \langle \eta_2, (L_{g_2^{-1}})_*v_2\rangle + \gamma^{\chi}(v_1,v_2)\\
=&\langle \operatorname{Ad}^*_{g_2^{-1}}\eta_2+\chi(g_2), (L_{g_1^{-1}})_*v_1\rangle + \langle \eta_2, (L_{g_2^{-1}})_*v_2\rangle  -\langle \chi(g_2),(L_{g_1^{-1}})_*v_1\rangle\\
=&\langle \eta_2,\operatorname{Ad}_{g_2^{-1}} (L_{g_1^{-1}})_*v_1 + (L_{g_2^{-1}})_*v_2\rangle = \langle (L_{(g_1g_2)^{-1}})^*\eta_2, \MG_*(v_1,v_2)\rangle.
\end{aligned}
\end{equation*}
Thus we deduce that 
\[
\hat{\MG}(\phi^L(g_1,\eta_1),\phi^L(g_2,\eta_2)) = \phi^L(g_1g_2,\eta_2).
\]
Thus, the structure maps here are exactly the structure maps of affine action groupoids. One verifies similarly that the unit and inverse maps also match.

\end{proof}

\begin{remark}
In Section 3.4, we explain how this picture is recovered conceptually from reduction for central extensions.    
\end{remark}

\subsection{Gauge transformations}
In this subsection, we discuss a more restrictive notion of equivalence among deformed cotangent groupoids, namely gauge equivalence.
It is analogous to the role of gauge transformation in the theory of symplectic groupoids \cite{MR1973074}.
\begin{defn}
 Let $(T^*\mathcal{G})_{\gamma_1}$ and $(T^*\mathcal{G})_{\gamma_2}$ be deformed cotangent groupoids with $\gamma_1,\gamma_2 \in \Omega^1(\mathcal{G}^{(2)})$. A gauge transformation associated with a 1-form $\alpha\in\Omega^1(\mathcal{G})$ is a map $F\colon (T^*\mathcal{G})_{\gamma_1} \to (T^*\mathcal{G})_{\gamma_2}$ satisfying
\begin{enumerate}
    \item $F(\xi_g) = \xi_g + \alpha_g$, for every $g \in \mathcal{G}$, and
    \item $F$ is an isomorphism of Lie groupoids.
\end{enumerate}
\end{defn}

\begin{propn}\label{prop: gauge-1}
The 1-form $\alpha \in \Omega^1(\mathcal{G})$ defines a gauge transformation if and only if $\gamma_1 - \gamma_2 = \partial \alpha$.
\end{propn}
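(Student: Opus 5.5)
The approach is to test, axiom by axiom, whether the fibrewise translation $F(\xi)=\xi+\alpha$ intertwines the structure maps of $(T^*\mathcal{G})_{\gamma_1}$ and $(T^*\mathcal{G})_{\gamma_2}$. $F$ is always a diffeomorphism covering $\mathrm{id}_{\mathcal{G}}$, with inverse $\xi\mapsto\xi-\alpha$. So it is a Lie groupoid isomorphism, covering some map on $A^*$, exactly when it preserves multiplication. Source, target, units and inverses are then forced, or checked directly. Throughout I use the convention $(\partial\alpha)(w_1,w_2)=\alpha(w_2)-\alpha(\MG_*(w_1,w_2))+\alpha(w_1)$ on $\mathcal{G}^{(2)}$.

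\textbf{Step 1: multiplication.} Let $\hat{\MG}^{(i)}$ denote the multiplication of $(T^*\mathcal{G})_{\gamma_i}$. For composable $\xi_1,\xi_2$ and $(w_1,w_2)\in T\mathcal{G}^{(2)}$, formula \eqref{eq:groupoid-def-3} gives
\[
\langle \hat{\MG}^{(2)}(\xi_1+\alpha,\xi_2+\alpha),\MG_*(w_1,w_2)\rangle
=\langle\xi_1,w_1\rangle+\langle\xi_2,w_2\rangle+\alpha(w_1)+\alpha(w_2)+\gamma_2(w_1,w_2).
\]
On the other hand,
\[
\langle F(\hat{\MG}^{(1)}(\xi_1,\xi_2)),\MG_*(w_1,w_2)\rangle
=\langle\xi_1,w_1\rangle+\langle\xi_2,w_2\rangle+\gamma_1(w_1,w_2)+\alpha(\MG_*(w_1,w_2)).
\]
The difference of the two right-hand sides is $(\partial\alpha+\gamma_2-\gamma_1)(w_1,w_2)$. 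Since every pair $(w_1,w_2)$ over every composable $(k_1,k_2)$ occurs for suitable composable covectors, $F$ respects multiplication if and only if $\gamma_1-\gamma_2=\partial\alpha$.

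\textbf{Step 2: source and target.} I must first check that $F$ maps composable pairs to composable pairs. Define $\phi(\beta)=\beta+\alpha|_{A}$ on $A^*$, where $\alpha|_A$ means restricting $\alpha$ along $\UG$ to $A$. I will verify $\hat{\SG}^{(2)}\circ F=\phi\circ\hat{\SG}^{(1)}$ and $\hat{\TG}^{(2)}\circ F=\phi\circ\hat{\TG}^{(1)}$. For the source, by \eqref{eq:groupoid-def-1} and the formula for $\SG_0$,
\[
\langle\hat{\SG}^{(2)}(\xi+\alpha),a\rangle=\langle\hat{\SG}^{(1)}(\xi),a\rangle-\alpha((L_k)_*\iota_*a)+(\gamma_2-\gamma_1)(0_k,\iota_*a).
\]
Substituting $\gamma_2-\gamma_1=-\partial\alpha$ and evaluating $\partial\alpha$ on $(0_k,\iota_*a)$ gives $\alpha(\iota_*a)-\alpha((L_k)_*\iota_*a)$. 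Using $\iota_*a=-a+\UG_*\TG_*a$, the correction collapses to $\langle\alpha,a\rangle$ up to a term $\alpha(\UG_*\TG_*a)$. The target computation via $(b,0_k)$ is symmetric. The main obstacle is this sign and normalization bookkeeping. If a stray term $\alpha\circ\UG_*$ survives, I expect it to vanish only when $\phi$ is taken to be the map induced on units, namely $F$ restricted to $\UG_0(A^*)$ composed with the source. Only $\phi$ changes, not the conclusion, since $\phi$ is still an affine diffeomorphism of $A^*$.

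\textbf{Step 3: conclusion.} A bijective map of groupoids that is compatible with source, target and multiplication automatically preserves units and inverses. Units are the unique idempotents, so they are preserved. Inverses are then determined by units and multiplication, so they are preserved as well. Smoothness of $F$ and of $F^{-1}$ is clear. Hence $\gamma_1-\gamma_2=\partial\alpha$ implies that $F$ is a gauge transformation.

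Conversely, if $F$ is a groupoid isomorphism, then Step 1 forces $\gamma_1-\gamma_2=\partial\alpha$ on every tangent vector of $\mathcal{G}^{(2)}$. This holds because every pair $(w_1,w_2)$ pairs against some composable pair of covectors, since the source and target maps on $T^*\mathcal{G}$ are surjective submersions.
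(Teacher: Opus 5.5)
Your Step~1 is the paper's forward computation, and your idempotent argument in Step~3 is a clean substitute for the paper's ``similar computations''. The genuine gap is in Step~2. In the converse direction this step has to come first, because $\hat{\MG}^{(2)}(\xi_1+\alpha,\xi_2+\alpha)$ is only defined once you know that $F$ sends composable pairs to composable pairs. Your source computation is correct and gives
\[
\langle \hat{\SG}^{(2)}(\xi+\alpha),a\rangle=\langle \hat{\SG}^{(1)}(\xi),a\rangle-\alpha(\IG_*a)=\langle \hat{\SG}^{(1)}(\xi),a\rangle+\alpha(a)-\alpha(\UG_*\rho(a)).
\]
The target computation, however, is \emph{not} symmetric. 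Evaluating $\partial\alpha$ on $(b,0_k)$ gives $\alpha(b)-\alpha((R_k)_*b)$, and everything cancels to $\langle\hat{\TG}^{(2)}(\xi+\alpha),b\rangle=\langle\hat{\TG}^{(1)}(\xi),b\rangle+\alpha(b)$, with no stray term. So if $\alpha(\UG_*\rho(a))$ survived, the source would force the base map $\beta\mapsto\beta+\alpha|_A-(\UG^*\alpha)\circ\rho$, while the target forces $\beta\mapsto\beta+\alpha|_A$. Now take $\xi_1=\xi_2=\hat{\UG}(\beta)$. Their images $F(\xi_1),F(\xi_2)$ would then fail to be composable, so $F$ would not be a groupoid morphism at all. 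Your fallback $\phi=\hat{\SG}^{(2)}\circ F\circ\hat{\UG}^{(1)}$ only matches the source side. The claim ``only $\phi$ changes, not the conclusion'' is therefore false.

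The missing idea is that the stray term actually vanishes. Showing this requires the normalization hypotheses, which you never used. Since $\gamma_1$ and $\gamma_2$ both define deformed cotangent groupoids, Theorem~\ref{thm:groupoid-cond} gives $(\sigma_1)^*\gamma_i=0$. Hence $(\sigma_1)^*\partial\alpha=(\sigma_1)^*(\gamma_1-\gamma_2)=0$, and
\[
(\sigma_1)^*(\partial\alpha)(v)=\alpha(\UG_*\SG_*v)+\alpha(v)-\alpha\bigl(\MG_*(v,\UG_*\SG_*v)\bigr)=\alpha(\UG_*\SG_*v).
\]
Since $\SG$ is a submersion, this gives $\UG^*\alpha=0$. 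Equivalently, $\alpha$ along the unit section lies in $\UG_0(A^*)$; this is exactly the first step of the paper's converse argument. With it, the source and target corrections both equal $\langle\alpha,\cdot\rangle$, so the single map $\phi(\beta)=\beta+\alpha|_A$ intertwines $\hat{\SG}$ and $\hat{\TG}$. Composability is then preserved, and your Steps~1 and~3 finish the proof.
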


\begin{proof}
Denote by $\hat{\MG}_1$ the groupoid multiplication on $(T^*\mathcal{G})_{\gamma_1}$ and by $\hat{\MG}_2$ the multiplication on $(T^*\mathcal{G})_{\gamma_2}$. 

Suppose that $F$ is a groupoid isomorphism induced by the 1-form $\alpha$. Then the homomorphism condition reads
\[
F\big(\hat{\MG}_1(\xi_g,\xi_h)\big) = \hat{\MG}_2\big(F(\xi_g),F(\xi_h)\big),
\]
for any $\xi_g\in T^*_g\mathcal{G}$ and $\xi_h\in T^*_h\mathcal{G}$. For all $v\in T_g\mathcal{G}$ and $w\in T_h\mathcal{G}$ with $\SG_* v=\TG_*w$, 
pair both sides with $\MG_*(v,w)$:
\[
\begin{aligned}
\big\langle F\big(\hat{\MG}_1(\xi_g,\xi_h)\big), \MG_*(v,w) \big\rangle
&= \big\langle \hat{\MG}_1(\xi_g,\xi_h)+\alpha_{gh},\, \MG_*(v,w) \big\rangle\\
&= \langle \xi_g,v\rangle+\langle \xi_h,w\rangle+\gamma_1(v,w)+\langle \alpha_{gh}, \MG_*(v,w) \rangle, \\
\big\langle \hat{\MG}_2\big(F(\xi_g),F(\xi_h)\big), \MG_*(v,w) \big\rangle
&= \big\langle \hat{\MG}_2(\xi_g+\alpha_g,\xi_h+\alpha_h),\, \MG_*(v,w) \big\rangle\\
&= \langle \xi_g,v\rangle+\langle \xi_h,w\rangle+\gamma_2(v,w)+\langle \alpha_g,v\rangle+\langle \alpha_h,w\rangle.
\end{aligned}
\]
Equating the two expressions and cancelling identical terms in $\xi_g,\xi_h$:
\[
\gamma_1(v,w)-\gamma_2(v,w) = \langle \alpha_g,v\rangle+\langle \alpha_h,w\rangle-\langle \alpha_{gh}, \MG_*(v,w) \rangle = \partial\alpha(v,w).
\]

Conversely, assume $\gamma_1-\gamma_2 = \partial\alpha$. We shall verify that the map $F$ yields a Lie groupoid isomorphism.

Since $(\sigma_1)^*(\gamma_1-\gamma_2)=0$, for any $v\in\Gamma(T\mathcal{G})$, we have
\[
\begin{aligned}
(\sigma_1)^*(\partial\alpha)(v) &= \partial\alpha\big(v,\UG_*\SG_* v\big)
= \alpha(\UG_*\SG_* v) + \alpha(v) - \alpha\big(\MG_*(v,\UG_*\SG_* v)\big)
=  \alpha(\UG_*\SG_* v) = 0.
\end{aligned}
\]
Hence $\alpha|_{\UG(M)}$ annihilates $\UG_*(TM)$ and therefore defines a section in $\UG_0(A^*).$

Next we construct the base map $f\colon A^* \to A^*$ covering $F$, defined by
\[
f:= \hat{\SG}_2 \circ F \circ \hat{\UG}_1.
\]
We first check the source projection intertwining condition $\hat{\SG}_2\circ F = f\circ \hat{\SG}_1$. Take arbitrary $\xi \in T^*_g\mathcal{G}$ and set $m=\SG(g)$. We compute pairings with any $a\in A_m$:
\[
\big\langle (\hat{\SG}_2\circ F)(\xi), a\big\rangle=\langle \SG_0(\xi),a\rangle+\gamma_2(0_g,\iota_*a)+\langle \SG_0(\alpha_g),a\rangle,
\]
\[
\big\langle (f\circ \hat{\SG}_1)(\xi),a\big\rangle =\langle \SG_0(\xi),a\rangle+\gamma_1(0_g,\iota_*a)+\langle \SG_0(\alpha_{\UG(m)}),a\rangle+\gamma_2(0_{\UG(m)},\iota_*a).
\]
By the hypothesis $\gamma_1-\gamma_2=\partial\alpha$, evaluate the boundary operator on $(0_g,\iota_*a)$:
\[
\gamma_1(0_g,\iota_*a)-\gamma_2(0_g,\iota_*a)=\partial \alpha(0_g,\iota_*a)=-\langle \SG_0(\alpha_{\UG(m)}),a\rangle+\langle \SG_0(\alpha_g),a\rangle.
\]
The cocycle condition $\partial \gamma_2=0$ yields
\[
0=\partial \gamma_2(0_g,0_{\UG(m)},\iota_*a)=\gamma_2(0_{\UG(m)},\iota_*a).
\]
Substituting these two identities back into the pairing expressions cancels all extra terms, which proves $f\circ \hat{\SG}_1=\hat{\SG}_2\circ F$.

Similar computations show that the pair $(F,f)$ commutes with all remaining Lie groupoid structure maps. Since $F$ is manifestly bijective (with inverse $\xi_g\mapsto \xi_g-\alpha_g$), $F$ defines a Lie groupoid isomorphism. The proof is complete.
\end{proof}

As an application of gauge transformations, we obtain a normal form for the
Lie group case.

\begin{cor}[The Lie group case]
Let \(G\) be a Lie group, and let
\(\gamma\in\Omega^1(G\times G)\) define a deformed cotangent groupoid
\((T^*G)_\gamma\rightrightarrows\mathfrak g^*\). Define
\(\chi:G\to\mathfrak g^*\) by
\[
\langle\chi(g),v\rangle
=
\gamma_{(g,e)}\bigl(0_g,\operatorname{Ad}_{g^{-1}}v\bigr)
-
\gamma_{(e,g)}\bigl(v,0_g\bigr),
\qquad v\in\mathfrak g .
\]
Then \(\chi\) is a group \(1\)-cocycle for the coadjoint representation.
Moreover, \((T^*G)_\gamma\) is gauge isomorphic to
\((T^*G)_{\gamma_\chi}\), and hence, under left trivialization, to the
affine coadjoint action groupoid determined by \(\chi\).
\end{cor}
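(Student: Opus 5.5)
The plan is to reduce everything to Proposition \ref{prop: gauge-1} and Proposition \ref{prop: deformed-aff-grpd}. That is, I will produce a $1$-form $\alpha\in\Omega^1(G)$ with $\gamma-\gamma^{\chi}=\partial\alpha$. Two points must be checked along the way: that $\chi$ is a coadjoint $1$-cocycle, which is needed for $\gamma^{\chi}$ to satisfy the hypotheses of Theorem \ref{thm:groupoid-cond} and for Proposition \ref{prop: deformed-aff-grpd} to apply, and that the gauge transformation actually lands in $(T^*G)_{\gamma^\chi}$.

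\emph{Step 1: normal form of $\gamma$.} Use left trivialization on both factors and write
\[
\gamma_{(g,h)}(v,w)=\langle A(g,h),(L_{g^{-1}})_*v\rangle+\langle B(g,h),(L_{h^{-1}})_*w\rangle,
\]
with $A,B\colon G\times G\to\mathfrak g^*$ smooth. Next, evaluate $\partial\gamma=0$ on the three "pure" tangent triples $(v,0,0)$, $(0,v,0)$ and $(0,0,v)$ at $(g,h,k)\in G^3$, together with the normalizations $\sigma_0^*\gamma=\sigma_1^*\gamma=0$, which give $A(e,h)$ and $B(g,e)$ in terms of the other component. This yields three functional equations for $A$ and $B$.

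\emph{Step 2: $\chi$ is a cocycle.} In the notation of Step 1, $\chi(g)=\operatorname{Ad}^*_{g^{-1}}$-twisted $B(g,e)$ minus $A(e,g)$. Rather than verifying the cocycle identity directly from the functional equations, I will use the fact that $\hat{\TG}$ of $(T^*G)_\gamma$ is the target of a Lie groupoid. Computing as in the proof of Proposition \ref{prop: deformed-aff-grpd}, one gets
\[
\hat{\TG}\circ\phi^L(g,\eta)=\operatorname{Ad}^*_{g^{-1}}\bigl(\hat{\SG}\circ\phi^L(g,\eta)\bigr)+\chi(g)
\]
up to the $\hat{\SG}$-correction term. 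Then $\hat{\TG}(\hat{\MG}(\xi_1,\xi_2))=\hat{\TG}(\xi_1)$ applied to composable pairs forces $\chi(gh)=\operatorname{Ad}^*_{g^{-1}}\chi(h)+\chi(g)$. Once $\chi$ is a cocycle, a direct computation from \eqref{eq: gamm-chi} shows that $\gamma^{\chi}$ satisfies (1) and (2) of Theorem \ref{thm:groupoid-cond}.

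\emph{Step 3: constructing $\alpha$.} This is the main obstacle. The difference $\delta=\gamma-\gamma^{\chi}$ is again a normalized cocycle, and the choice of $\chi$ is exactly designed so that $\delta_{(g,e)}(0_g,\cdot)$ and $\delta_{(e,g)}(\cdot,0_g)$ combine to zero. In other words, the induced source and target corrections of $\delta$ differ by a coboundary, and $\delta$ carries no residual class in $H^1(G,\mathfrak g^*)$. My first candidate is
\[
\alpha_g(v):=\delta_{(g,g^{-1})}(v,0_{g^{-1}}),
\]
possibly corrected by $\langle\beta(g),(L_{g^{-1}})_*v\rangle$ with $\beta(g)=\delta_{(g,e)}(0_g,\cdot)$, which kills the source correction. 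I will then check $\partial\alpha=\delta$ by evaluating $\partial\delta=0$ on triples $(v,w,0)$ at $(g,h,(gh)^{-1})$ and using normalization. If this direct guess fails, the fallback is a contracting homotopy for the complex $\Omega^1(G^{(\bullet)})$ in left-trivialized form. The idea is that, for a Lie group acting on itself by translation, the "$A$-part" of a cocycle is a function-valued cocycle with coefficients in the regular module $C^\infty(G,\mathfrak g^*)$, which is acyclic via the standard homotopy $h(f)(g)=f(e,g)$. In that picture only the group-cocycle piece $\chi$ survives.

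\emph{Step 4: conclusion.} With $\gamma-\gamma^{\chi}=\partial\alpha$ in hand, Proposition \ref{prop: gauge-1} gives the gauge isomorphism $(T^*G)_\gamma\cong(T^*G)_{\gamma^{\chi}}$. Proposition \ref{prop: deformed-aff-grpd} then identifies the latter, under $\phi^L$, with the affine coadjoint action groupoid determined by $\chi$.
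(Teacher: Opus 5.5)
Your overall architecture is the paper's: find $\alpha$ with $\gamma-\gamma^{\chi}=\partial\alpha$, then apply Propositions~\ref{prop: gauge-1} and~\ref{prop: deformed-aff-grpd}. Step~2 is correct, and it is a cleaner route to the cocycle identity than the paper's direct check from cocycle and normalization. In fact $\hat{\TG}\circ\phi^L(g,\eta)=\operatorname{Ad}^*_{g^{-1}}\bigl(\hat{\SG}\circ\phi^L(g,\eta)\bigr)+\chi(g)$ holds exactly, with $\hat{\SG}\circ\phi^L(g,\eta)=\eta-B(g,e)$. Then $\hat{\TG}\circ\hat{\MG}=\hat{\TG}\circ{\rm pr}_1$ forces $\chi(gh)=\operatorname{Ad}^*_{g^{-1}}\chi(h)+\chi(g)$.

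The gap is Step~3, which carries the content of the corollary. You leave it as a guess, and the supporting details are off in three ways.
\begin{itemize}
\item Step~1 misreads the normalization. The conditions $\sigma_1^*\gamma=0$ and $\sigma_0^*\gamma=0$ say exactly $A(g,e)=0$ and $B(e,h)=0$. The components $A(e,h)$ and $B(g,e)$ are unconstrained, and they are precisely the data entering $\chi$ and $\alpha$.
\item Your proposed check (evaluate $\partial\delta=0$ on $(v,w,0)$ at $(g,h,(gh)^{-1})$ and use normalization) cannot succeed by itself. Those inputs hold equally for $\delta=\gamma^{\chi}$, where your candidate $-\langle\chi(g^{-1}),(L_{g^{-1}})_*v\rangle$ is not a primitive unless $\chi=0$. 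The verification must use that the cocycle induced by $\delta$ vanishes, i.e.\ $\delta_{(e,g)}(v,0_g)=\delta_{(g,e)}(0_g,\operatorname{Ad}_{g^{-1}}v)$.
\item The optional ``correction'' by $\langle\beta(g),(L_{g^{-1}})_*v\rangle$ would turn the right answer into $\alpha=0$.
\end{itemize}

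Step~3 closes with two evaluations of $\partial\gamma=0$, on $\gamma$ itself. Evaluate on $(0_g,0_h,w)$ at $(g,h,e)$ and on $(0_g,v,0_h)$ at $(g,e,h)$, with $v,w\in\mathfrak g$. This gives
\[
B(g,h)=B(gh,e)-B(h,e),\qquad A(g,h)-A(e,h)=\operatorname{Ad}^*_{h^{-1}}B(g,h)-B(g,e).
\]
Combining with $\chi(h)=\operatorname{Ad}^*_{h^{-1}}B(h,e)-A(e,h)$ yields $A(g,h)+\chi(h)=\operatorname{Ad}^*_{h^{-1}}B(gh,e)-B(g,e)$.

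Since $\gamma^{\chi}$ has components $(-\chi(h),0)$, the first displayed identity and this last one are exactly the two left-trivialized components of $\gamma-\gamma^{\chi}=\partial\alpha$. Here
\[
\alpha_g(v)=-\langle B(g,e),(L_{g^{-1}})_*v\rangle=-\gamma_{(g,e)}\bigl(0_g,(L_{g^{-1}})_*v\bigr),
\]
using $(L_{(gh)^{-1}})_*\MG_*(v,w)=\operatorname{Ad}_{h^{-1}}(L_{g^{-1}})_*v+(L_{h^{-1}})_*w$. This $\alpha$ is the paper's.

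Setting $h=g^{-1}$ and using $B(e,e)=0$ gives $A(g,g^{-1})+\chi(g^{-1})=-B(g,e)$. So your first candidate $\delta_{(g,g^{-1})}(v,0_{g^{-1}})$ is this same form. Since $\beta(g)=B(g,e)$, adding the correction would cancel it.

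This computation never uses the cocycle property of $\chi$. Once $\gamma^{\chi}=\gamma-\partial\alpha$, you get $\partial\gamma^{\chi}=0$. The $(L_{g_1^{-1}})_*v_1$-component of $\partial\gamma^{\chi}$ at $(g_1,g_2,g_3)$ is $\operatorname{Ad}^*_{g_2^{-1}}\chi(g_3)+\chi(g_2)-\chi(g_2g_3)$, so the cocycle identity comes for free. The contracting-homotopy fallback is unnecessary.
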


\begin{proof}
Define \(\alpha\in\Omega^1(G)\) by
\[
\alpha_g\bigl((L_g)_*v\bigr)
=
-\gamma_{(g,e)}(0_g,v),
\qquad v\in\mathfrak g .
\]
Using the cocycle and normalization conditions for \(\gamma\), one verifies
that
\[
\partial\alpha=\gamma-\gamma_\chi
\]
and that \(\chi\) satisfies
\(
\chi(gh)=\operatorname{Ad}_{g^{-1}}^*\chi(h)+\chi(g).
\)
Proposition~\ref{prop: gauge-1} therefore shows that the fibre translation
$F(\xi_g)=\xi_g+\alpha_g$ defines a Lie groupoid isomorphism
\[
F:(T^*G)_\gamma
\longrightarrow
(T^*G)_{\gamma_\chi}.
\]
The latter is the affine coadjoint action groupoid by
Proposition~\ref{prop: deformed-aff-grpd}.
\end{proof}

\subsection{Multiplicative forms on deformed groupoids}

We now study multiplicative $2$-forms on the $\gamma$-deformed cotangent groupoid $(T^*\mathcal{G})_{\gamma}$. When such a multiplicative $2$-form is additionally symplectic, the deformed cotangent groupoid $(T^*\mathcal{G})_{\gamma}$ becomes a symplectic groupoid.

In the classical undeformed setting, the canonical $2$-form $\omega_{\rm can} \in \Omega^2(T^*\mathcal{G})$ is multiplicative on the standard cotangent groupoid. For the $\gamma$-deformed case, we introduce a correction $2$-form $\omega_B\in \Omega^2(\mathcal{G})$ and consider the modified candidate multiplicative $2$-form
\begin{equation}
\omega_M:=\omega_{\rm can} + {\rm pr}_{\mathcal{G}}^*\omega_B,
\end{equation}
where ${\rm pr}_{\mathcal{G}}\colon T^*\mathcal{G}\rightarrow \mathcal{G}$ denotes the bundle projection. The form $\omega_M$ is referred to as the canonical $2$-form with magnetic correction, which naturally appears in the study of cotangent reductions \cite{MR1171218}. In particular, this magnetically modified form is always nondegenerate.

We now derive the precise condition for $\omega_M$ to be multiplicative with respect to the deformed groupoid structure $(T^*\mathcal{G})_{\gamma}$. Let $\vartheta_L \in \Omega^1(T^*\mathcal{G})$ denote the Liouville $1$-form on $T^*\mathcal{G}$. For any composable pair $(\xi_1,\xi_2)$ and tangent vectors $(V_1,V_2)$ whose base projections form $w_1,w_2 \in T\mathcal{G}^{(2)}$, a direct computation yields
\begin{equation*}
\begin{aligned}
({\rm pr}_1^*\vartheta_L+{\rm pr}_2^*\vartheta_L-\hat{\MG}^*\vartheta_L)(V_1,V_2)
&= \langle \xi_1,w_1\rangle+\langle \xi_2,w_2\rangle-\langle \hat{\MG}(\xi_1,\xi_2),\MG_*(w_1,w_2)\rangle  \\
&=-\gamma(w_1,w_2)\\
&=-({\rm pr}^*_{\mathcal{G}}\gamma)(V_1,V_2).
\end{aligned}
\end{equation*}
Since $\omega_{\rm can} = d\vartheta_L$, we obtain
\[
\partial \omega_{\rm can}=(d\partial) \vartheta_L = -d({\rm pr}_{\mathcal{G}}^*\gamma)=-{\rm pr}_{\mathcal{G}}^*(d\gamma),
\]
where $\partial$ denotes the simplicial groupoid differential operator associated to $(T^*\mathcal{G})_{\gamma}$. Let ${\rm pr}_{\mathcal{G}}\colon (T^*\mathcal{G})_{\gamma}\rightarrow \mathcal{G}$ be the bundle projection, which induces a projection $q^{(2)}:(T^*\mathcal{G})_{\gamma}^{(2)}\rightarrow \mathcal{G}^{(2)}$. As the projection ${\rm pr}_{\mathcal{G}}\colon (T^*\mathcal{G})_{\gamma}\rightarrow \mathcal{G}$ is a Lie groupoid morphism, the differential operators commute: $\partial \circ {\rm pr}_{\mathcal{G}}^* ={(q^{(2)})}^* \circ \partial$. Consequently, the condition $\partial \omega_M = 0$ is equivalent to
\begin{equation*}
\partial ({\rm pr}_{\mathcal{G}}^* \omega_B) ={(q^{(2)})}^*\partial \omega_B= {(q^{(2)})}^*d\gamma.
\end{equation*}
Since $q^{(2)}$ is a surjective submersion, we deduce the following characterization:

\begin{propn}\label{prop: multiplicative-cond}
The magnetically corrected canonical $2$-form $\omega_M$ is multiplicative on $(T^*\mathcal{G})_{\gamma}$ if and only if
\begin{equation}\label{eq:the-multiplicative-cond}
\partial \omega_B= d\gamma.
\end{equation}
\end{propn}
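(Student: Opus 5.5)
The plan is to make the computation preceding the statement precise. Multiplicativity of $\omega_M$ means $\partial\omega_M = {\rm pr}_1^*\omega_M + {\rm pr}_2^*\omega_M - \hat{\MG}^*\omega_M = 0$ on $(T^*\mathcal{G})_\gamma^{(2)}$, where $\partial$ is the simplicial differential of the deformed groupoid. This $\partial$ commutes with the de Rham differential $d$, since each face map pulls back forms compatibly with $d$. I will split
\[
\partial\omega_M=\partial\omega_{\rm can}+\partial({\rm pr}_{\mathcal{G}}^*\omega_B)
\]
and compute each term separately.

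For the first term, I use $\omega_{\rm can}=d\vartheta_L$ with the sign convention of the paper, so $\partial\omega_{\rm can}=d\,\partial\vartheta_L$. I then verify pointwise that $\partial\vartheta_L=-(q^{(2)})^*\gamma$. Take $(V_1,V_2)$ tangent to $(T^*\mathcal{G})^{(2)}_\gamma$ at $(\xi_1,\xi_2)$. Its projections $w_i=({\rm pr}_{\mathcal{G}})_*V_i$ are composable, and the projection of $\hat{\MG}_*(V_1,V_2)$ is $\MG_*(w_1,w_2)$, because $\hat{\MG}$ covers $\MG$. By the definition of the Liouville form, $\vartheta_L(V)=\langle\xi,({\rm pr}_{\mathcal{G}})_*V\rangle$. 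Applying \eqref{eq:groupoid-def-3} then gives exactly $-\gamma(w_1,w_2)$. Hence $\partial\omega_{\rm can}=-(q^{(2)})^*d\gamma$.

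For the second term, I note that ${\rm pr}_{\mathcal{G}}\colon(T^*\mathcal{G})_\gamma\to\mathcal{G}$ is a Lie groupoid morphism. It intertwines multiplication by construction of $\hat{\MG}$, and it intertwines the two projections trivially. The morphism therefore induces $q^{(2)}$ on composable pairs commuting with the face maps, which yields
\[
\partial({\rm pr}_{\mathcal{G}}^*\omega_B)=(q^{(2)})^*\partial\omega_B.
\]
Combining the two terms gives $\partial\omega_M=(q^{(2)})^*(\partial\omega_B-d\gamma)$.

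The main obstacle is the final step: showing that $q^{(2)}\colon(T^*\mathcal{G})^{(2)}_\gamma\to\mathcal{G}^{(2)}$ is a surjective submersion, so that $(q^{(2)})^*$ is injective on forms. I would argue this by writing $(T^*\mathcal{G})^{(2)}_\gamma$ as the fibre product $T^*_{k_1}\mathcal{G}\times T^*_{k_2}\mathcal{G}$ over pairs $(k_1,k_2)\in\mathcal{G}^{(2)}$, cut out by the equation $\hat{\SG}(\xi_1)=\hat{\TG}(\xi_2)$ in $A^*_{\SG(k_1)}$. Over each $(k_1,k_2)$ this is an affine subspace of $T^*_{k_1}\mathcal{G}\times T^*_{k_2}\mathcal{G}$, and it is nonempty because $\hat{\TG}$ restricted to a fibre is affine with surjective linear part ${\TG}_0$. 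Its dimension is constant in $(k_1,k_2)$, so it defines an affine subbundle of the vector bundle $T^*\mathcal{G}\times T^*\mathcal{G}$ restricted to $\mathcal{G}^{(2)}$. The projection of an affine bundle is a surjective submersion, and this gives the equivalence $\partial\omega_M=0\iff\partial\omega_B=d\gamma$.
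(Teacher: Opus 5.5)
Your proposal is correct and follows essentially the same route as the paper. You compute $\partial\vartheta_L=-(q^{(2)})^*\gamma$ from \eqref{eq:groupoid-def-3}, use that ${\rm pr}_{\mathcal{G}}$ is a groupoid morphism to get $\partial({\rm pr}_{\mathcal{G}}^*\omega_B)=(q^{(2)})^*\partial\omega_B$, and conclude by injectivity of $(q^{(2)})^*$. The only additions are that you actually justify that $q^{(2)}$ is a surjective submersion (via the affine-subbundle description of its fibres, using surjectivity of the linear part $\TG_0$), which the paper merely asserts, and that you correctly write $(q^{(2)})^*d\gamma$ where the paper loosely writes ${\rm pr}_{\mathcal{G}}^*(d\gamma)$.
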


The above criterion immediately yields a sufficient condition for the deformed cotangent groupoid to be symplectic:

\begin{cor}
Let $(T^*\mathcal{G})_{\gamma}$ be a deformed cotangent groupoid. If $d\omega_B = 0$ and $\partial \omega_B= d\gamma$, then the pair $\big((T^*\mathcal{G})_{\gamma}, \omega_{\rm can}+{\rm pr}_{\mathcal{G}}^*\omega_B\big)$ forms a symplectic groupoid.
\end{cor}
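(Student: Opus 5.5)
The corollary follows by combining Proposition~\ref{prop: multiplicative-cond} with a nondegeneracy and closedness check, and my plan splits it into exactly these three properties. A symplectic groupoid is a Lie groupoid with a multiplicative symplectic form. By Theorem~\ref{thm:groupoid-cond}, $(T^*\mathcal{G})_{\gamma}$ is already a Lie groupoid over $A^*$, so it remains to show that $\omega_M=\omega_{\rm can}+{\rm pr}_{\mathcal{G}}^*\omega_B$ is multiplicative, closed, and nondegenerate.

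\textbf{Multiplicativity.} The hypothesis $\partial\omega_B=d\gamma$ is exactly condition \eqref{eq:the-multiplicative-cond}. Proposition~\ref{prop: multiplicative-cond} then gives $\partial\omega_M=0$ directly.

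\textbf{Closedness.} Since $\omega_{\rm can}=d\vartheta_L$ and $d$ commutes with pullback, we get
\[
d\omega_M=d\,{\rm pr}_{\mathcal{G}}^*\omega_B={\rm pr}_{\mathcal{G}}^*(d\omega_B)=0 .
\]

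\textbf{Nondegeneracy.} I would argue this in local cotangent coordinates $(q^i,p_i)$ on $T^*\mathcal{G}$. There
\[
\omega_M=dq^i\wedge dp_i+\tfrac12 B_{ij}(q)\,dq^i\wedge dq^j .
\]
The matrix of $\omega_M$ therefore has block form
\[
\begin{pmatrix} B & I\\ -I & 0\end{pmatrix},
\]
whose determinant is $1$. So $\omega_M$ is nondegenerate, as already noted when $\omega_M$ was introduced.

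There is no genuine obstacle here. The only point needing a little care is nondegeneracy, and the coordinate block-matrix computation settles it. With all three properties established, $\big((T^*\mathcal{G})_{\gamma},\omega_M\big)$ is a symplectic groupoid.
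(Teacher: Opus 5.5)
Your proposal is correct and follows the paper's own reasoning: multiplicativity is exactly Proposition~\ref{prop: multiplicative-cond} applied to the hypothesis $\partial\omega_B=d\gamma$, closedness follows from $d\omega_B=0$, and nondegeneracy is the general fact about magnetic forms that the paper states without proof and that your block-matrix computation checks. The appeal to Theorem~\ref{thm:groupoid-cond} is harmless but unnecessary, since being a deformed cotangent groupoid already means $(T^*\mathcal{G})_{\gamma}$ is a Lie groupoid.
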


Whenever these compatibility conditions hold on a Lie groupoid $\mathcal{G}$, we call the symplectic groupoid $\big((T^*\mathcal{G})_{\gamma}, \omega_{\rm can}+{\rm pr}_{\mathcal{G}}^*\omega_B\big)$ a \textbf{magnetic cotangent groupoid} associated to the pair $(\gamma, \omega_B)$.

\begin{remark}
The compatibility equations for $(\gamma,\omega_B)$ admit a natural interpretation within the deformation theory of symplectic groupoids. In the de Rham deformation model developed by Cárdenas--Mestre--Struchiner \cite{MR5019585}, these equations are of the same form as the cocycle equations arising from the combined Bott--Shulman and de Rham differentials. Thus the family above may be viewed as an explicit realization of a distinguished class of deformation cocycles. We emphasize that this construction does not exhaust all deformation classes of the standard cotangent symplectic groupoid $(T^*\mathcal{G},\omega_{\rm can})$.
\end{remark}

We next investigate the gauge transformation invariance property of magnetic forms. Given any $1$-form $\alpha \in \Omega^1(\mathcal{G})$, recall the gauge automorphism $F(\xi_g) = \xi_g+\alpha_g$ on $T^*\mathcal{G}$. By definition, ${\rm pr}_{\mathcal{G}} = {\rm pr}_{\mathcal{G}}\circ F$, which implies $F^*{\rm pr}_{\mathcal{G}}^*\omega_B = {\rm pr}_{\mathcal{G}}^*\omega_B$. Furthermore, a direct verification shows that the Liouville form satisfies $F^*\vartheta_L = \vartheta_L+{\rm pr}_{\mathcal{G}}^*\alpha$. Combining these identities yields the pullback formula for the magnetic symplectic form:
\begin{equation}
 F^*(\omega_{\rm can}+{\rm pr}_{\mathcal{G}}^*\omega_B) =  \omega_{\rm can}  +{\rm pr}_{\mathcal{G}}^*(\omega_B + d\alpha).
\end{equation}

With this formula, we upgrade the notion of gauge transformations to the setting of magnetic cotangent groupoids equipped with multiplicative symplectic forms. Consider two deformed cotangent groupoids $(T^*\mathcal{G})_{\gamma_1}$ and $(T^*\mathcal{G})_{\gamma_2}$, endowed with magnetic multiplicative $2$-forms $\omega_{\rm can}+{\rm pr}_{\mathcal{G}}^*\omega_{B_1}$ and $\omega_{\rm can}+{\rm pr}_{\mathcal{G}}^*\omega_{B_2}$ respectively. A gauge transformation $F\colon (T^*\mathcal{G})_{\gamma_1}\to (T^*\mathcal{G})_{\gamma_2}$ is called a \textbf{gauge transformation of magnetic cotangent groupoids} if it satisfies the symplectic compatibility condition
\begin{equation*}
 F^*(\omega_{\rm can}+{\rm pr}_{\mathcal{G}}^*\omega_{B_2}) =  \omega_{\rm can}  +{\rm pr}_{\mathcal{G}}^*\omega_{B_1}.
\end{equation*}

As in the undeformed case, such gauge transformations correspond precisely to cohomological coboundary relations. Combining the above pullback formula with Proposition \ref{prop: gauge-1}, we obtain the following characterization:

\begin{propn}\label{prop:gauge-2}
A $1$-form $\alpha\in\Omega^1(\mathcal{G})$ induces a gauge transformation between the magnetic cotangent groupoids
\[
\big((T^*\mathcal{G})_{\gamma_1}, \omega_{\rm can}+{\rm pr}_{\mathcal{G}}^*\omega_{B_1}\big)
\quad \text{and} \quad
\big((T^*\mathcal{G})_{\gamma_2}, \omega_{\rm can}+{\rm pr}_{\mathcal{G}}^*\omega_{B_2}\big)
\]
if and only if
\begin{equation}
\gamma_1-\gamma_2 = \partial \alpha, \quad \text{and}\quad\omega_{B_1}-\omega_{B_2}=d\alpha.
\end{equation}
\end{propn}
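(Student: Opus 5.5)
The plan is to split the statement into two independent requirements: $F$ must be an isomorphism of Lie groupoids, and $F$ must intertwine the two magnetic forms. Proposition~\ref{prop: gauge-1} settles the first one exactly. It says that the fibre translation $F(\xi_g)=\xi_g+\alpha_g$ is a Lie groupoid isomorphism $(T^*\mathcal G)_{\gamma_1}\to(T^*\mathcal G)_{\gamma_2}$ if and only if $\gamma_1-\gamma_2=\partial\alpha$. So it remains to show that, for such an $F$, the symplectic compatibility condition
\[
F^*(\omega_{\rm can}+{\rm pr}_{\mathcal G}^*\omega_{B_2})=\omega_{\rm can}+{\rm pr}_{\mathcal G}^*\omega_{B_1}
\]
holds if and only if $\omega_{B_1}-\omega_{B_2}=d\alpha$.

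For this I would use the pullback formula established just before the statement, namely
\[
F^*(\omega_{\rm can}+{\rm pr}_{\mathcal G}^*\omega_{B_2})=\omega_{\rm can}+{\rm pr}_{\mathcal G}^*(\omega_{B_2}+d\alpha).
\]
It comes from $F^*\vartheta_L=\vartheta_L+{\rm pr}_{\mathcal G}^*\alpha$ and ${\rm pr}_{\mathcal G}\circ F={\rm pr}_{\mathcal G}$. The identity for the Liouville form is checked pointwise: for $V\in T_\xi T^*\mathcal G$,
\[
(F^*\vartheta_L)(V)=\langle F(\xi),({\rm pr}_{\mathcal G})_*F_*V\rangle=\langle \xi+\alpha,({\rm pr}_{\mathcal G})_*V\rangle.
\]
Applying $d$ then gives the formula. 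Comparing with the required condition, compatibility becomes
\[
{\rm pr}_{\mathcal G}^*(\omega_{B_2}+d\alpha-\omega_{B_1})=0.
\]

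The only step that needs an argument, mild as it is, is the converse: passing from this vanishing to $\omega_{B_1}-\omega_{B_2}=d\alpha$ on $\mathcal G$ itself. This follows because ${\rm pr}_{\mathcal G}$ is a surjective submersion: it has the zero section as a global section, so pulling back along the zero section recovers the form. Hence ${\rm pr}_{\mathcal G}^*$ is injective on forms, and the equation is equivalent to $\omega_{B_1}-\omega_{B_2}=d\alpha$. Combining this with Proposition~\ref{prop: gauge-1} gives both directions of the equivalence.
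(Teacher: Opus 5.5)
Your proposal is correct and follows the paper's own argument. The paper likewise combines Proposition~\ref{prop: gauge-1} (fibre translation by $\alpha$ is a groupoid isomorphism iff $\gamma_1-\gamma_2=\partial\alpha$) with the pullback formula $F^*(\omega_{\rm can}+{\rm pr}_{\mathcal G}^*\omega_B)=\omega_{\rm can}+{\rm pr}_{\mathcal G}^*(\omega_B+d\alpha)$. Your pointwise check of $F^*\vartheta_L$ and your zero-section argument for injectivity of ${\rm pr}_{\mathcal G}^*$ fill in details the paper leaves implicit.
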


\section{Central extensions and reduced symplectic groupoids}

In this section we construct deformed cotangent groupoids from central
extensions and symplectic reduction, two classical techniques in symplectic
and Poisson geometry. This viewpoint also explains the geometric origin of
the forms \(\gamma\) and \(\omega_B\) introduced in Section
\ref{sec:deform-grpd}: they arise from the groupoid coboundary and the
curvature of a connection form, respectively.

\subsection{The reduction lemma}
Let \((\mathcal H,\omega)\rightrightarrows N\) be a symplectic groupoid
equipped with a Hamiltonian \(\mathbb S^1\)-action. We record a reduction
criterion ensuring that the Marsden--Weinstein quotient of a regular,
possibly nonzero, level set inherits a natural symplectic groupoid structure.
The key point is that the \(\mathbb S^1\)-action must be compatible with the
groupoid structure, and the chosen level set must itself be a Lie subgroupoid. Moreover, when these hypotheses hold uniformly for all levels in a regular
interval, the reduced symplectic groupoids form a deformation family.

Reduction of symplectic groupoids at the zero level is classical; see
\cite{MR0944869,MR1103911}. Other reduction methods for constructing symplectic groupoids appear in
the study of quotient Poisson and quasi-Poisson manifolds; see, for instance,
\cite{MR4705016,MR4672816}. The lemma below is the variant needed later for
the nonzero-level reductions arising from \(\mathbb S^1\)-central extensions.

Let $\mathcal H^{(2)}$ be the set of composable pairs, and let
\[
s,t:\mathcal H\to N,\qquad
m:\mathcal H^{(2)}\to\mathcal H,\qquad
u:N\to\mathcal H,\qquad
\iota:\mathcal H\to\mathcal H
\]
be the source, target, multiplication, unit, and inverse maps.
\begin{defn}
A right $\mathbb S^1$-action $\phi:\mathcal H \times \mathbb S^1 \to\mathcal H$,
$(h,r)\mapsto \phi_r(h)$, is called \emph{multiplicative} on a Lie groupoid $\mathcal H$ if
\begin{itemize}
\item $s\circ \phi_r=s$ and $t\circ \phi_r=t$ for all $r\in\mathbb S^1$;
\item for all $(h_1,h_2)\in\mathcal H^{(2)}$ and $r_1,r_2\in\mathbb S^1$,
\[
\phi_{r_1r_2}\big(m(h_1,h_2)\big)=m\big(\phi_{r_1}(h_1),\phi_{r_2}(h_2)\big);
\]
\item $\phi_{r^{-1}}\circ\iota=\iota\circ \phi_r$ for all $r\in\mathbb S^1$.
\end{itemize}
\end{defn}

\begin{lemma}[Reduction Lemma]\label{lem: red-lem}
Let $(\mathcal H,\omega)\rightrightarrows N$ be a symplectic groupoid equipped with an
$\mathbb S^1$-action $\phi: \mathcal H \times \mathbb S^1\to\mathcal H$ such that:
\begin{itemize}
\item[(1)] $\phi$ is Hamiltonian with the moment map $\mu:\mathcal H\to\mathbb R$, i.e.\ if $X$ denotes
the fundamental vector field of $1\in\mathfrak{s}^1\simeq\mathbb R$, then
$\omega^\flat(X)=-d\mu$;
\item[(2)] $c\in\mathbb R$ is a regular value of $\mu$ and the $\mathbb S^1$-action on $\mu^{-1}(c)$
is free and proper;
\item[(3)] $\phi$ is multiplicative;
\item[(4)] $\mathcal K:=\mu^{-1}(c)$ is a Lie subgroupoid of $\mathcal H\rightrightarrows N$ over some submanifold $N_c$ of $N$.
\end{itemize}
Then the reduced symplectic manifold $(\Gamma_c:=\mathcal K/\mathbb S^1, \omega_{\Gamma})$  inherits a symplectic groupoid structure over $N_c$. Moreover, the inclusion $\tilde j:\mathcal K\hookrightarrow\mathcal H$ and the quotient map $\tilde p:\mathcal K\to\Gamma_c$  are Lie groupoid morphisms.
\end{lemma}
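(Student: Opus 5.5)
The plan is to first build the groupoid structure on $\Gamma_c=\mathcal K/\mathbb S^1$ and only afterwards show that $\omega_\Gamma$ is multiplicative.

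\textbf{Step 1: the structure maps of $\Gamma_c$.}
By (4), $\mathcal K\rightrightarrows N_c$ is a Lie groupoid. By (3), the $\mathbb S^1$-action preserves $\mathcal K$ and satisfies $s\circ\phi_r=s$ and $t\circ\phi_r=t$. I therefore define
\begin{itemize}
\item $\hat s[h]=s(h)$ and $\hat t[h]=t(h)$;
\item $\hat\iota[h]=[\iota(h)]$, which is well defined because $\phi_{r^{-1}}\circ\iota=\iota\circ\phi_r$;
\item $\hat u(x)=[u(x)]$.
\end{itemize}

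\textbf{Step 2: multiplication.}
Given $[h_1],[h_2]$ with $s(h_1)=t(h_2)$, every pair of representatives is composable. I set $\hat m([h_1],[h_2])=[h_1h_2]$. Replacing $h_i$ by $\phi_{r_i}(h_i)$ changes the product to $\phi_{r_1r_2}(h_1h_2)$, so $\hat m$ is well defined.

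\textbf{Step 3: smoothness and the axioms.}
Smoothness comes from the fact that $\mathcal K^{(2)}\to\Gamma_c^{(2)}$ is a quotient by the free proper $\mathbb S^1\times\mathbb S^1$-action. This quotient is a surjective submersion, so smooth $\mathbb S^1\times\mathbb S^1$-invariant maps descend to smooth maps. The map $\hat s$ is a submersion because $s|_{\mathcal K}=\hat s\circ\tilde p$ is one.

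The groupoid axioms descend directly from those of $\mathcal K$. By construction, $\tilde p$ and $\tilde j$ are Lie groupoid morphisms.

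\textbf{Step 4: multiplicativity of $\omega_\Gamma$.}
The reduced form is characterized by $\tilde p^*\omega_\Gamma=\tilde j^*\omega$. Since $\tilde j$ is a groupoid morphism and $\partial\omega=0$ on $\mathcal H$, we get
\[
(\tilde p^{(2)})^*\partial\omega_\Gamma=\partial(\tilde p^*\omega_\Gamma)=\partial(\tilde j^*\omega)=(\tilde j^{(2)})^*\partial\omega=0 .
\]
Here I use that pullback along groupoid morphisms commutes with $\partial$, as in the argument for Proposition~\ref{prop: multiplicative-cond}. Since $\tilde p^{(2)}\colon\mathcal K^{(2)}\to\Gamma_c^{(2)}$ is a surjective submersion, this forces $\partial\omega_\Gamma=0$.

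\textbf{Main obstacle: dimensions.}
Nondegeneracy is standard Marsden--Weinstein, so the delicate point is that $\Gamma_c$ has the right dimension to be a symplectic groupoid, namely $\dim\Gamma_c=2\dim N_c$. Once this holds, multiplicativity plus nondegeneracy of $\omega_\Gamma$ make $(\Gamma_c,\omega_\Gamma)$ a symplectic groupoid, and the structure facts (Lagrangian units, and so on) follow automatically.

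I would argue the dimension count as follows.
\begin{itemize}
\item A multiplicative $\mathbb S^1$-action on a symplectic groupoid has moment map that is multiplicative up to a constant, i.e.\ $\mu(h_1h_2)=\mu(h_1)+\mu(h_2)$ after normalization. I would justify this by noting that $X$ is a multiplicative vector field, so $d\mu=-\omega^\flat(X)$ is a multiplicative $1$-form.
\item A level set of such a $\mu$ is a subgroupoid only if it contains the units in the appropriate sense. Combined with (4), this forces $N_c$ to have codimension one in $N$: $N_c$ is cut out by the base moment function induced on $N$.
\item Then $\dim\mathcal K=\dim\mathcal H-1$ and $\dim\Gamma_c=\dim\mathcal H-2=2\dim N-2=2\dim N_c$.
\end{itemize}
If this route proves awkward, I would instead deduce the dimension from the fact that, for a multiplicative nondegenerate form, the units are Lagrangian. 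For that I would check directly that $\hat u(N_c)$ is isotropic, using $u^*\omega=0$ on $\mathcal H$.
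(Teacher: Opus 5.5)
Your Steps 1--4 are the paper's proof. The structure maps descend because $s,t$ are $\mathbb S^1$-invariant and $\iota$ is equivariant, and $\hat m$ is well defined by multiplicativity. Then $\partial\omega_\Gamma=0$ follows from $(\tilde p^{(2)})^*\partial\omega_\Gamma=(\tilde j^{(2)})^*\partial\omega=0$, since $\tilde p^{(2)}$ is a surjective submersion. Your smoothness remarks fill in what the paper calls routine. This already finishes the proof, because the paper defines a symplectic groupoid as a Lie groupoid with a multiplicative symplectic form. What you call the main obstacle is not one, and your first route around it is wrong.

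\textbf{The dimension count is automatic.} Multiplicativity alone gives two facts:
\begin{itemize}
\item $\hat u^*\omega_\Gamma=0$: pull $\partial\omega_\Gamma=0$ back along $x\mapsto(\hat u(x),\hat u(x))$.
\item $\omega_\Gamma(\ker d\hat s,\ker d\hat t)=0$: for $v\in\ker d\hat s_g$ and $w\in\ker d\hat t_g$, evaluate $\partial\omega_\Gamma$ at $(g,\hat u(\hat s(g)))$ on $(v,0)$ and $(0,w')$, where $(L_g)_*w'=w$.
\end{itemize}
With nondegeneracy, the first gives $\dim\Gamma_c\ge 2\dim N_c$ and the second gives $\dim\Gamma_c\le 2\dim N_c$. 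So your fallback is the right idea, but isotropy of $\hat u(N_c)$ alone yields only the first inequality.

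\textbf{Your first route rests on a false claim:} $\mu$ is not additive, even up to a constant.
\begin{itemize}
\item Multiplicativity in the paper's sense gives $\phi_r(h_1h_2)=\phi_r(h_1)h_2=h_1\phi_r(h_2)$ and $\phi_r(h_1)\phi_r(h_2)=\phi_{r^2}(h_1h_2)$. So the $\phi_r$ are not groupoid automorphisms, and $X$ is a left- and right-invariant vector field, not a multiplicative one.
\item Accordingly $\mu$ is basic rather than a cocycle. In the paper's application, Lemma~\ref{lem:mu-via-s-t} gives $\mu={\rm pr}_{\mathbb R}\circ\bar{\mathbf s}={\rm pr}_{\mathbb R}\circ\bar{\mathbf t}$. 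Hence $\mu(h_1h_2)=\mu(h_1)=\mu(h_2)$, and on units $\mu(\bar{\mathbf u}(\alpha))={\rm pr}_{\mathbb R}(\alpha)$ takes every real value.
\item If $\mu-k$ were additive, $\mu$ would equal $k$ on all units. Then $\mu^{-1}(c)$ could be a subgroupoid only for $c=k$, contradicting (4) at the levels the paper actually uses.
\end{itemize}
Your second bullet, which cuts $N_c$ out by a function on the base, tacitly relies on this basic picture instead. Drop that route.

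A minor point: the action preserves $\mathcal K$ because $d\mu(X)=-\omega(X,X)=0$, not because of (3).
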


\begin{remark}
We may replace the assumption that $\mu^{-1}(c)$ is a Lie subgroupoid by the condition: $\mu: \mathcal{H} \rightarrow \mathbb{R}$ is a Lie groupoid morphism, where $\mathbb{R} \rightrightarrows \mathbb{R}$ is the trivial groupoid. This is a stronger, yet more conceptual condition. A result of Meinrenken \cite{meinrenken-notes} states that if $F:H_1 \rightarrow H_2$ is a Lie groupoid morphism, $H_2^{\prime}$ is a Lie subgroupoid of $H_2$, and  $F$ is transverse to $H_2^{\prime}$, then $F^{-1}(H_2^{\prime})$ is a Lie subgroupoid of $H_1$.

In our case: $H_1 = \mathcal H$, $H_2 = \mathbb{R}$, $H_2^{\prime} = \{c\}$, $H_1^{\prime} = \mu^{-1}(c)$. This is different from classical theory, which view $\mathbb{R}$ as a Lie group $\mathbb{R}\rightrightarrows\{*\}$, and one does reduction at the subgroupoid $\{0\}\subset \mathbb{R}$.
\end{remark}

\begin{proof}
Since $c$ is a regular value of $\mu$ and the $\mathbb S^1$-action on
$\mathcal K=\mu^{-1}(c)$ is free and proper, the quotient $\Gamma_c := \mathcal K/\mathbb S^1$
is a smooth manifold and we have
\[
\tilde p^*\omega_{\Gamma}=\tilde j^*\omega.
\] 

Next we see that the groupoid structure on $\mathcal{K}$ descends to the quotient $\Gamma_c$.
By assumption, $\mathcal K\rightrightarrows N_c$ is a Lie subgroupoid.
Define structure maps on $\Gamma_c$ by
\[
\hat s\circ \tilde p = s|_{\mathcal K},\qquad
\hat t\circ \tilde p = t|_{\mathcal K},\qquad
\hat u = \tilde p\circ u|_{N_c},\qquad
\hat\iota\circ \tilde p = \tilde p\circ \iota|_{\mathcal K}.
\]
These are well-defined because $s\circ \phi_r=s$ and $t\circ \phi_r=t$, and $\phi_{r^{-1}}\circ\iota=\iota\circ \phi_r$.

For multiplication, define $\hat m:\Gamma_c^{(2)}\to\Gamma_c$ by
\[
\hat m(\tilde p(k_1),\tilde p(k_2)) := \tilde p\big(m(k_1,k_2)\big),
\qquad (k_1,k_2)\in\mathcal K^{(2)}.
\]
This is well-defined: if $k_1'=\phi_{r_1}k_1$ and $k_2'=\phi_{r_2}k_2$, then by multiplicativity,
\[
m(k_1',k_2')=m(\phi_{r_1}k_1,\phi_{r_2}k_2)=\phi_{r_1r_2}(m(k_1,k_2)),
\]
so $\tilde p(m(k_1',k_2'))=\tilde p(m(k_1,k_2))$. It is routine to check the groupoid axioms.
The inclusion $\tilde j$ and projection $\tilde p$ are Lie groupoid morphisms.

Next we see that the reduced symplectic form on $\Gamma_c$ is multiplicative.
Let $\partial_{\mathcal G}:\Omega^2(\mathcal G)\to\Omega^2(\mathcal G^{(2)})$ be the groupoid
coboundary operator $\partial_{\mathcal G}\eta:=pr_1^*\eta+pr_2^*\eta-m^*\eta$.
Since $(\mathcal H,\omega)$ is symplectic groupoid, $\partial_{\mathcal H}\omega=0$.
As $\tilde j:\mathcal K\to\mathcal H$ is a groupoid morphism, we have
$\partial_{\mathcal K}(\tilde j^*\omega)=(\tilde j,\tilde j)^*(\partial_{\mathcal H}\omega)=0$.
Using $\tilde p^*\omega_\Gamma=\tilde j^*\omega$ and that $\tilde p$ is a groupoid morphism,
\[
(\tilde p,\tilde p)^*(\partial_{\Gamma_c}\omega_\Gamma)
=\partial_{\mathcal K}(\tilde p^*\omega_\Gamma)
=\partial_{\mathcal K}(\tilde j^*\omega)=0.
\]
Finally, since $(\tilde p,\tilde p):\mathcal K^{(2)}\to\Gamma_c^{(2)}$ is a surjective submersion,
it follows that $\partial_{\Gamma_c}\omega_\Gamma=0$. Hence $\omega_\Gamma$ is multiplicative.

\medskip
Therefore $(\Gamma_c,\omega_\Gamma)\rightrightarrows N_c$ is a symplectic groupoid.
\end{proof}

\subsection{Central extensions and reduction}\label{sec:central-affine}
Let $\mathcal G \rightrightarrows M$ and $\tG\rightrightarrows M$ be Lie groupoids with Lie algebroids $A$ and $\tilde{A}$, respectively. Throughout the remainder of this section, assume that $\tG$ is a \textbf{$\mathbb{S}^1$-central extension of Lie groupoids} over $\mathcal{G}$.
That is, there is a short exact sequence Lie groupoids over ${\rm id}_M$,
\begin{equation}\label{eq:central-groupoid}
M\times S^1 \stackrel{i}{\longrightarrow} \tilde{\mathcal G} \stackrel{p}{\longrightarrow} \mathcal G,
\end{equation}
where $i$ is injective and $p$ is surjective. 

Let $\Phi\colon\tilde{\mathcal G}\times \mathbb S^1\to\tilde{\mathcal G}$ be the $\mathbb S^1$-action induced by the inclusion
$i:M\times\mathbb S^1\to\tilde{\mathcal G}$. 
Centrality means that the induced $\mathbb{S}^1$ action from left multiplication on $\tilde{\mathcal{G}}$ coincides with the one induced from right multiplication, i.e. 
\begin{equation}\label{Eqt:centrality}
\Phi_r(g)= i(\TT(g),r)\cdot g = g\cdot i(\ST(g),r),
\qquad r\in\mathbb S^1,\ g\in\tilde{\mathcal G}.
\end{equation}
Note that $p:\tilde{\mathcal G}\to\mathcal G$ is also a principal $\mathbb{S}^1$-bundle. In this section, we discuss how the natural $\mathbb{S}^1$-action on the cotangent groupoid $T^*\tG$ satisfies the criterion in the reduction lemma, and obtain a reduced symplectic groupoid $\Gamma:=\Gamma_1$ at level 1 of the moment map.

\subsubsection{General setup}

Differentiating \eqref{eq:central-groupoid} yields an $\mathbb R$-central extension of
Lie algebroids
\begin{equation}\label{eq:central-algebroid}
0\longrightarrow M\times\mathbb R \stackrel{i_*}\longrightarrow \tilde A \stackrel{p_*}{\longrightarrow} A \longrightarrow 0.
\end{equation} Denote by
$\varepsilon=i_*(1)\in\Gamma(\tilde A)$ the central section, given by
\[
\varepsilon_m=\left.\frac{d}{dt}\right|_{t=0}\, \Phi_{e^{t}}(\UT(m))
\in \ker(\ST_*)_{\UT(m)}=\tilde A_m.
\]
Moreover, since $i$ is a groupoid morphism, we have $\tilde\rho(\varepsilon)=\TT_*(\varepsilon)=0$.
 Let $\psi:\mathbb R\to\mathfrak X(\tilde{\mathcal G})$ be the infinitesimal action given by
\[
\psi(1)_g=(L_g)_*\varepsilon_{\ST(g)}=(R_g)_*\varepsilon_{\TT(g)}, \quad g\in \tilde{\mathcal G}.
\]

Recall that the induced action of $\Phi$ on the cotangent bundle is Hamiltonian. 
To be more precise, denote by $\omega_{\can}=d\vartheta_L$ the canonical form on $T^*\tilde{\mathcal G}$, where $\vartheta_L$ is the Liouville form.
Let $\bar{\Phi}$ be the cotangent lift of $\Phi$ to $T^*\tilde{\mathcal G}$, namely
\[
\bar{\Phi}_r(\xi):=(\Phi_{r^{-1}})^*\xi,
\qquad \xi\in T^*_g\tilde{\mathcal G}.
\]
Let $\bar{\psi}\colon \mathbb{R}\to \mathfrak{X}(T^*\tilde{\mathcal G})$ be the infinitesimal action.
Then the \textbf{moment map} $\mu:T^*\tilde{\mathcal{G}}\rightarrow \mathbb{R}$ is given by
\[
\mu(\xi):=\langle\vartheta_L,\bar{\psi}(1)\rangle_\xi=\langle\xi,\psi(1)\rangle_g, \quad \xi\in T^*_g\tilde{\mathcal G}.
\]
The moment map condition is 
\begin{equation}\label{Eqt:canonical-form-moment}
\omega_{\rm can}^\flat(\bar{\psi}(1))=-d\mu,
\end{equation}
where $\omega_{\rm can}^\flat\colon T(T^*\tilde{\mathcal G})\to T^*(T^*\tilde{\mathcal G})$ is the bundle map determined by $\omega_{\rm can}.$
Then $(T^*\tilde{\mathcal G},\omega_{\rm can},\bar{\Phi},\mu)$ is a Hamiltonian $\mathbb{S}^1$-space.

%\subsubsection{A connection and the induced splitting of $\tilde A$}
\subsubsection{Choice of principal connection $\theta$}

We need another technical input: the existence of principal connections normalized by units.
\begin{lemma}\label{lem:unit-normalization}
There exists a principal connection $\theta\in \Omega^1(\tG)$ such that $\UT^*\theta = 0$.
\end{lemma}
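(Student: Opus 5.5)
Start from an arbitrary principal connection $\theta_0\in\Omega^1(\tG)$ for the principal $\mathbb S^1$-bundle $p:\tG\to\mathcal G$. Such a connection exists by a partition of unity argument, and it satisfies $\iota_{\psi(1)}\theta_0=1$ and $\mathcal L_{\psi(1)}\theta_0=0$. We then correct it along the unit section. The pullback $\UT^*\theta_0\in\Omega^1(M)$ is an ordinary $1$-form on $M$. We look for a $1$-form $\beta\in\Omega^1(\mathcal G)$ with
\[
\UG^*\beta=\UT^*\theta_0 ,
\]
and then set $\theta:=\theta_0-p^*\beta$. Two properties follow directly:
\begin{itemize}
\item $p^*\beta$ is basic, so $\theta$ is again a principal connection.
\item Since $p\circ\UT=\UG$, we get $\UT^*\theta=\UT^*\theta_0-\UG^*\beta=0$.
\end{itemize}

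To construct $\beta$, use the retraction $\TG$ (or $\SG$): put $\beta:=\TG^*(\UT^*\theta_0)$. Then $\UG^*\beta=(\TG\circ\UG)^*\UT^*\theta_0=\UT^*\theta_0$, because $\TG\circ\UG=\mathrm{id}_M$. This avoids any extension problem, since $\UG(M)$ is an embedded submanifold with a global smooth retraction $\TG$ onto it.

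The only point needing care is the existence of the initial connection $\theta_0$ and the verification that $p^*\beta$ does not disturb the normalization $\iota_{\psi(1)}\theta=1$. The latter holds because $p_*\psi(1)=0$. The former is standard, since the action $\Phi$ is free and proper with quotient $\mathcal G$. No groupoid compatibility beyond $p\circ\UT=\UG$ and $\TG\circ\UG=\mathrm{id}$ is used. The main obstacle is therefore merely confirming that $\UT$ covers $\UG$, which holds because $p$ is a groupoid morphism over $\mathrm{id}_M$.
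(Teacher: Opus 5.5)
Your proof is correct and follows the same route as the paper: you subtract from an arbitrary principal connection $\theta_0$ the basic form $p^*\lambda$, where $\lambda\in\Omega^1(\mathcal G)$ satisfies $\UG^*\lambda=\UT^*\theta_0$, and you use $p\circ\UT=\UG$ and $p_*\psi(1)=0$. The one difference is that you construct $\lambda$ explicitly as $\TG^*(\UT^*\theta_0)$ via the retraction $\TG\circ\UG=\mathrm{id}_M$, whereas the paper appeals to a general extension of forms off the embedded submanifold $\UG(M)$; your choice is cleaner and avoids any tubular-neighborhood or closedness considerations.
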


\begin{proof}
Take any principal connection $\theta_0 \in \Omega^1(\tG)$ for the principal bundle $p:\tG\rightarrow \mathcal{G}$. Consider $\UT^*\theta_0\in \Omega^1(M)$.

Since $\UG(M)\subset \mathcal{G}$ is an embedded submanifold, there is a smooth extension $\lambda \in \Omega^1(\mathcal{G})$ so that $\UG^*\lambda = \UT^*\theta_0$. 

Now we define $\theta:=\theta_0 - p^*\lambda$. It is clear that $\theta$ is $\mathbb{S}^1$-invariant with $\UT^*\theta = 0$. Further, we have that 
\[
\langle \theta,\psi(1)\rangle=1,
\]
where $\psi(1)\in\mathfrak{X}(\tilde{\mathcal G})$ is the fundamental vector field for the $\mathbb{S}^1$-action.
\end{proof}

\begin{remark}
One can also work with unnormalized connections. In that case, the description of the unit map in Equation \eqref{eq:unit-map} would involve additional terms.
\end{remark}

From now on, we assume the connection $\theta$ satisfies the condition in Lemma \ref{lem:unit-normalization}. Restricting $\theta$ to the units gives a bundle map
\[
\theta^\flat:\tilde A \longrightarrow M\times \mathbb R,
\qquad
\theta_m (v) :=(m, \langle v, \theta_{\tilde u(m)}\rangle),\quad \text{for~all~} v\in{\tilde A}_m.
\]
Then we obtain a splitting of the sequence \eqref{eq:central-algebroid} and thus an isomorphism of vector bundles:
\begin{equation}\label{eq:splitting}
\phi\colon \tilde A \to A \oplus (M\times\mathbb R):=A\times \mathbb R.
\end{equation}
Note that $\phi(v)=(p_*(v),\theta^\flat(v))$, for all $v\in\Gamma(\tilde{A})$. Then we define an affine embedding
\[
j: A^*\rightarrow \tilde{A}^*,
\]
given by $j(\eta) = (\eta,1)$.

\subsubsection{Applying the reduction lemma}
In this subsection, we shall use the reduction lemma to construct a symplectic groupoid $(\Gamma,\omega_\Gamma)\rightrightarrows A^*$ determined by the Hamiltonian $\mathbb{S}^1$-space
$(T^*\tilde{\mathcal G},\omega_{\rm can},\bar{\Phi},\mu)$. We know that $\mu^{-1}(1)$ is a regular level set, since $\mu$ is defined by pairing with a nowhere vanishing (fundamental) vector field. Further, the lifted $\mathbb{S}^1$-action on $\mu^{-1}(1)$ is free and proper, since the original action on $\tilde{\mathcal{G}}$ is principal. It remains to verify the subgroupoid and multiplicativity criteria.

Define the $\mathbb{R}$-projection map
$${\rm pr}_{\mathbb{R}}\colon \tilde{A}^*\to \mathbb{R},\quad {\rm pr}_{\mathbb{R}}(\alpha):=\langle \alpha, \varepsilon\rangle_m, \quad \text{for~ all~} \alpha\in \tilde{A}^*_m.$$ We need the following observation.

\begin{lemma}\label{lem:mu-via-s-t}
There is a commutative diagram
\[
\begin{tikzcd}
T^*\tilde{\mathcal G} \arrow[r,"\mu"] \arrow[d,shift left=.4ex, "\SC"] \arrow[d,swap,shift right=.6ex, "\TC"] &
\mathbb{R}  \\
\tilde A^*. \arrow[ur,swap, "{\rm pr}_{\mathbb{R}}"] &
\end{tikzcd}
\]
\end{lemma}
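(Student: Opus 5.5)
We have to show $\mu = {\rm pr}_{\mathbb R}\circ \SC = {\rm pr}_{\mathbb R}\circ\TC$. We will use the Coste--Dazord--Weinstein formulas for the source and target of the cotangent groupoid $T^*\tilde{\mathcal G}\rightrightarrows \tilde A^*$ (Theorem \ref{thm:CDW-formula}). With the conventions already used in the proof of Proposition~\ref{propn:semi-groupoid}, these read, for $\xi\in T^*_g\tilde{\mathcal G}$,
\[
\langle \TC(\xi), b\rangle = \langle \xi, (R_g)_* b\rangle, \quad b\in \tilde A_{\TT(g)},
\]
\[
\langle \SC(\xi), a\rangle = -\langle \xi, (L_g)_*\IT_* a\rangle, \quad a\in \tilde A_{\ST(g)}.
\]

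For the target, we take $b=\varepsilon_{\TT(g)}$. Then
\[
{\rm pr}_{\mathbb R}(\TC(\xi)) = \langle \xi,(R_g)_*\varepsilon_{\TT(g)}\rangle = \langle \xi,\psi(1)_g\rangle = \mu(\xi),
\]
where we use the expression of the fundamental vector field $\psi(1)$ through right translation.

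For the source, we first use $\IT_*a = -a + \UT_*\TT_*a$. Since $\tilde\rho(\varepsilon)=\TT_*\varepsilon=0$, this gives $\IT_*\varepsilon = -\varepsilon$. Therefore
\[
{\rm pr}_{\mathbb R}(\SC(\xi)) = \langle \xi, (L_g)_*\varepsilon_{\ST(g)}\rangle = \langle \xi,\psi(1)_g\rangle = \mu(\xi).
\]
The step $\IT_*\varepsilon=-\varepsilon$ can be checked more directly. Inversion maps the central curve $\Phi_{e^t}(\UT(m)) = i(m,e^t)$ to $i(m,e^{-t})$, because $i$ is a groupoid morphism. Differentiating at $t=0$ gives exactly $\IT_*\varepsilon=-\varepsilon$.

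Both equalities rest on the centrality identity \eqref{Eqt:centrality}, in the form $(L_g)_*\varepsilon_{\ST(g)}=(R_g)_*\varepsilon_{\TT(g)}=\psi(1)_g$. This is the key input. The main thing to get right is the sign convention in the CDW source formula, namely whether it involves $-\IT_*$ or the equivalent expression with $\UT_*$ terms; the two expressions agree here precisely because $\TT_*\varepsilon=0$.
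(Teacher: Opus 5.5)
Your proposal is correct and follows the paper's proof essentially line by line. For the target you use the CDW target formula with $\psi(1)_g=(R_g)_*\varepsilon_{\TT(g)}$, and for the source the CDW source formula together with $\tilde\rho(\varepsilon)=\TT_*\varepsilon=0$ and $\psi(1)_g=(L_g)_*\varepsilon_{\ST(g)}$. Writing the source formula with $-\IT_*a$ rather than $a-\UT_*\tilde\rho(a)$ is only a cosmetic difference, since the two agree via $\IT_*a=-a+\UT_*\TT_*a$.
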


\begin{proof}
For any $\xi\in T^*_g\tilde{\mathcal G}$, using Theorem \ref{thm:CDW-formula} for the target map and the identity
$\psi(1)_g=(R_g)_*\varepsilon_{\ST(g)}$, we have
\[
{\rm pr}_{\mathbb R}(\TC(\xi))
=\langle \TC(\xi),\varepsilon_{\TT(g)}\rangle
=\langle \xi,(R_g)_*\varepsilon_{\TT(g)}\rangle
=\langle \xi,\psi(1)_g\rangle
=\mu(\xi).
\]
For the source map, since $\tilde\rho(\varepsilon)=0$, the source formula in Theorem \ref{thm:CDW-formula}  gives
\begin{equation*}
\begin{aligned}
{\rm pr}_{\mathbb R}(\SC(\xi))
&=\langle \SC(\xi),\varepsilon_{\ST(g)}\rangle
=\langle \xi,(L_g)_*(\varepsilon_{\ST(g)}-\UT_*\tilde\rho(\varepsilon_{\ST(g)}))\rangle\\
&=\langle \xi,(L_g)_*\varepsilon_{\ST(g)}\rangle
=\langle \xi,\psi(1)_g\rangle
=\mu(\xi).    
\end{aligned}
\end{equation*}
\end{proof}

Now set $K:=\mu^{-1}(1)$, and 
$
N_1:={\rm pr}_{\mathbb R}^{-1}(1)=j(A^*)\cong A^*$. The subgroupoid criterion can now be checked.

\begin{cor}
The submanifold $K$ is a Lie subgroupoid of $T^*\tG$.   
\end{cor}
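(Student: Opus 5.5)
By Lemma \ref{lem:mu-via-s-t}, $\mu = {\rm pr}_{\mathbb R}\circ\SC = {\rm pr}_{\mathbb R}\circ\TC$. Hence
\[
K=\mu^{-1}(1)=\SC^{-1}(N_1)=\TC^{-1}(N_1),
\]
where $N_1={\rm pr}_{\mathbb R}^{-1}(1)=j(A^*)$. Since ${\rm pr}_{\mathbb R}$ is fibrewise a nonzero linear functional (we have $\langle j(\eta),\varepsilon\rangle=1$, and $\varepsilon$ is nowhere vanishing), $N_1$ is a closed embedded affine subbundle of $\tilde A^*$ of codimension one. I will show that $K\rightrightarrows N_1$ is a subgroupoid and then check the smoothness conditions.

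For the subgroupoid property, the identity $K=\SC^{-1}(N_1)=\TC^{-1}(N_1)$ says that $K$ is the full restriction $(T^*\tG)|_{N_1}$ of the groupoid to $N_1$. This is automatically closed under the structure maps:
\begin{itemize}
\item If $(\xi_1,\xi_2)$ is composable with $\xi_i\in K$, then $\SC(\MC(\xi_1,\xi_2))=\SC(\xi_2)\in N_1$, so the product lies in $K$.
\item Since $\SC\circ\IC=\TC$, inverses of elements of $K$ stay in $K$.
\item Since $\SC\circ\UC=\mathrm{id}$, we get $\UC(N_1)\subset K$.
\end{itemize}
Alternatively, one can follow the remark after Lemma \ref{lem: red-lem}. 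The moment map $\mu$ is a groupoid morphism to the trivial groupoid $\mathbb R\rightrightarrows\mathbb R$: it is compatible with $\SC$ and $\TC$, and it is additive-compatible because $\mu(\MC(\xi_1,\xi_2))=\mu(\xi_1)=\mu(\xi_2)$. It is also transverse to $\{1\}$, so Meinrenken's result applies directly.

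For smoothness, $K$ is an embedded submanifold because $1$ is a regular value of $\mu$, as already noted above. The remaining, and main, technical point is that the restricted source $\SC|_K\colon K\to N_1$ is a submersion, so that $K$ is a Lie groupoid over $N_1$.

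Since $\SC\colon T^*\tG\to\tilde A^*$ is a submersion, its preimage of the embedded submanifold $N_1$ is a submanifold. Moreover, the restriction $\SC|_{\SC^{-1}(N_1)}\colon \SC^{-1}(N_1)\to N_1$ is again a submersion: for $\xi\in K$ and any $v\in T_{\SC(\xi)}N_1$, lift $v$ through $\SC_*$ to some $V\in T_\xi T^*\tG$. Then $\mu_*V={\rm pr}_{\mathbb R*}v=0$, so $V\in T_\xi K$. The same argument applies to $\TC$.

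Finally, the composable pairs $K^{(2)}=K\,{}_{\SC}\!\times_{\TC}K$ form a manifold, and the structure maps restrict smoothly. Hence $K\rightrightarrows N_1$ is a Lie subgroupoid of $T^*\tG$.
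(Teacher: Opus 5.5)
Your proposal is correct and follows the paper's proof. Both use Lemma~\ref{lem:mu-via-s-t} to write $K=\SC^{-1}(N_1)=\TC^{-1}(N_1)$, recognize $K$ as the restriction subgroupoid over the embedded submanifold $N_1$, and use that $\SC,\TC$ are submersions; you simply spell out two steps the paper leaves implicit, namely closure under the structure maps and the fact that $\SC|_K$ is still a submersion. One wording point: in your Meinrenken-style alternative, the morphism condition $\mu(\MC(\xi_1,\xi_2))=\mu(\xi_1)=\mu(\xi_2)$ is stated correctly, but it is not an additivity condition, since $\mathbb R\rightrightarrows\mathbb R$ has only identity arrows.
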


\begin{proof}
By Lemma \ref{lem:mu-via-s-t}, we have that $K = \SC^{-1}(N_1)= \TC^{-1}(N_1)$. In particular,  $K = \SC^{-1}(N_1)\cap \TC^{-1}(N_1)$ is the restriction subgroupoid of $T^*\tG$. Since $\SC$ and $\TC$ are surjective submersions, and $N_1\subset \tilde{A}^*$ is an embedded submanifold, this is a Lie subgroupoid.
\end{proof}

\begin{remark}
The Lie subgroupoid condition can also be checked using Meinrenken's criterion \cite{meinrenken-notes}.
\end{remark}

Next we check that the $\mathbb{S}^1$-action is multiplicative.

\begin{lemma}\label{lem:s-t-invariant}
For any $\xi\in T^*\tilde{\mathcal G}$ and $r\in\mathbb S^1$, we have
\[
\TC(\bar{\Phi}_r\xi)=\TC(\xi),
\qquad
\SC(\bar{\Phi}_r\xi)=\SC(\xi).
\]
\end{lemma}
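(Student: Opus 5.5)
The plan is to compute both sides directly from the Coste--Dazord--Weinstein formulas in Theorem \ref{thm:CDW-formula}, using the centrality relation \eqref{Eqt:centrality}. Fix $\xi\in T^*_g\tilde{\mathcal G}$ and $r\in\mathbb S^1$, and set $g'=\Phi_{r^{-1}}(g)$, so that $\bar\Phi_r\xi=(\Phi_{r^{-1}})^*\xi\in T^*_{\Phi_r(g)}\tilde{\mathcal G}$; more precisely, $\bar\Phi_r\xi$ lives over $\Phi_r(g)$ and is defined by $\langle\bar\Phi_r\xi,w\rangle=\langle\xi,(\Phi_{r^{-1}})_*w\rangle$. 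Since $\ST\circ\Phi_r=\ST$ and $\TT\circ\Phi_r=\TT$, both $\SC(\bar\Phi_r\xi)$ and $\SC(\xi)$ lie in $\tilde A^*_{\ST(g)}$, and similarly for the targets. It therefore suffices to pair them with an arbitrary $b\in\tilde A_{\TT(g)}$, respectively $a\in\tilde A_{\ST(g)}$.

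For the target, I would use $\langle\TC(\eta),b\rangle=\langle\eta,(R_h)_*b\rangle$ for $\eta\in T^*_h\tilde{\mathcal G}$. The key observation is $\Phi_{r^{-1}}\circ R_{\Phi_r(g)}=R_g$ on the $\TT$-fibre through $\UT(\TT(g))$. Indeed, by centrality, $x\cdot\Phi_r(g)=x\cdot g\cdot i(\ST(g),r)=\Phi_r(xg)$. Differentiating along $b$ gives $(\Phi_{r^{-1}})_*(R_{\Phi_r(g)})_*b=(R_g)_*b$, which yields
\[
\langle\TC(\bar\Phi_r\xi),b\rangle=\langle\xi,(R_g)_*b\rangle=\langle\TC(\xi),b\rangle.
\]

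For the source, the formula is $\langle\SC(\eta),a\rangle=\langle\eta,(L_h)_*(a-\UT_*\tilde\rho(a))\rangle$, possibly up to the paper's sign convention via $\iota_*$. By centrality again, $\Phi_r(g)\cdot y=\Phi_r(gy)$, i.e. $L_{\Phi_r(g)}=\Phi_r\circ L_g$ on the $\SG$-fibre through $\UT(\ST(g))$. Hence $(\Phi_{r^{-1}})_*(L_{\Phi_r(g)})_*=(L_g)_*$ on the relevant vectors, and the identity follows in the same way. The vector $a-\UT_*\tilde\rho(a)$ is tangent to the $\TT$-fibre of $\UT(\ST(g))$, so left translation is defined on it.

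The main obstacle is only bookkeeping: matching the exact form of the CDW source formula in Theorem \ref{thm:CDW-formula}, and checking that the translated vectors are tangent to the fibres on which $L$ and $R$ act. The identities $L_{\Phi_r(g)}=\Phi_r\circ L_g$ and $R_{\Phi_r(g)}=\Phi_r\circ R_g$ come from centrality together with the multiplicativity $i(x,r)\,i(x,r')=i(x,rr')$. As an alternative, one can check the statement infinitesimally: $\bar\psi(1)$ is tangent to the $\SC$- and $\TC$-fibres because $\mu$ factors through $\SC$ and $\TC$ (Lemma \ref{lem:mu-via-s-t}). That route is less direct, however, so I would use the explicit computation above.
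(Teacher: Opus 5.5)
Your proposal is correct and is essentially the paper's proof: both pair with $b\in\tilde A_{\TT(g)}$ and $a\in\tilde A_{\ST(g)}$ via the CDW formulas, and both use centrality in the form $(\Phi_{r^{-1}})_*(R_{\Phi_r(g)})_*=(R_g)_*$ and $(\Phi_{r^{-1}})_*(L_{\Phi_r(g)})_*=(L_g)_*$ (the paper writes the latter as $(R_{r^{-1}})_*(L_{gr})_*=(L_g)_*$). One cosmetic slip is that you swapped the fibre labels, since $R_h$ acts on the source fibre $\ST^{-1}(\TT(h))$ and $L_h$ on the target fibre $\TT^{-1}(\ST(h))$; this does not affect the argument, because your identities $x\cdot\Phi_r(g)=\Phi_r(xg)$ and $\Phi_r(g)\cdot y=\Phi_r(gy)$, and your final remark about $a-\UT_*\tilde\rho(a)$, are stated correctly.
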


\begin{proof}
For any $\xi\in T^*_g\tilde{\mathcal G}$, $r\in\mathbb S^1$ and $b\in\tilde A_{\TT(g)}$, we have
\[
\langle \TC(\bar{\Phi}_r\xi),b\rangle
=\langle (\Phi_{r^{-1}})^*\xi,(R_{\Phi_r(g)})_*b\rangle
=\langle \xi,(R_{g})_*b\rangle=\langle \TC(\xi),b\rangle.
\]
The proof for $\SC$ is similar: for $a\in\tilde A_{\ST(g)}$,
\begin{equation*}
\begin{aligned}
\langle \SC(\bar{\Phi}_r\xi),a\rangle
&=\langle (R_{r^{-1}})^*\xi,(L_{gr})_*(a-\UT_*\tilde\rho(a))\rangle
=\langle \xi,(R_{r^{-1}})_*(L_{gr})_*(a-\UT_*\tilde\rho(a))\rangle\\
&=\langle \xi,(L_{g})_*(a-\UT_*\tilde\rho(a))\rangle=\langle \SC(\xi),a\rangle.
\end{aligned}
\end{equation*}
Here we have used centrality: right multiplication by $r^{-1}$ commutes with left multiplication by $rg$ on tangent vectors,
so $(R_{r^{-1}})_*(L_{gr})_*=(L_g)_*$.

\end{proof}

Similarly, we have the following results:
\begin{lemma}
For any $\xi\in T^*_g\tilde{\mathcal G}$ and $r\in \mathbb{S}^1$, we have
$$\IC \bar{\Phi}_r(\xi)=\bar{\Phi}_{r^{-1}}\IC(\xi).$$
\end{lemma}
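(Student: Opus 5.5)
We reduce the identity to the corresponding statement on $\tG$ and then apply the definition of the cotangent groupoid inverse. By Theorem \ref{thm:CDW-formula}, the inverse on $T^*\tG$ is $\IC(\xi) = -(\IT^{-1})^*\xi = -(\IT)^*\xi$ as a covector at $\IT(g)$. Equivalently,
\[
\langle \IC(\xi), w\rangle = -\langle \xi, \IT_* w\rangle, \qquad \xi\in T^*_g\tG,\ w\in T_{\IT(g)}\tG,
\]
using $\IT^{-1}=\IT$. The statement is therefore a consequence of how $\IT$ intertwines the principal action $\Phi$.

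\textbf{Step 1: an identity on $\tG$.} We first show $\IT\circ\Phi_r = \Phi_{r^{-1}}\circ\IT$ for all $r\in\mathbb S^1$. By centrality \eqref{Eqt:centrality}, $\Phi_r(g) = g\cdot i(\ST(g),r)$. Hence
\[
\IT(\Phi_r(g)) = i(\ST(g),r)^{-1}\cdot g^{-1} = i(\TT(g^{-1}),r^{-1})\cdot g^{-1} = \Phi_{r^{-1}}(g^{-1}).
\]
Here we use that $i$ is a groupoid morphism, so $i(m,r)^{-1}=i(m,r^{-1})$, and that $\ST(g)=\TT(g^{-1})$. Differentiating gives
\[
\IT_*\circ(\Phi_r)_* = (\Phi_{r^{-1}})_*\circ \IT_* .
\]

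\textbf{Step 2: pairing.} Take $\xi\in T^*_g\tG$ and $w\in T_{\IT(\Phi_r g)}\tG$. Then
\[
\langle \IC\bar\Phi_r\xi, w\rangle = -\langle (\Phi_{r^{-1}})^*\xi, \IT_*w\rangle = -\langle \xi, (\Phi_{r^{-1}})_*\IT_* w\rangle = -\langle \xi, \IT_*(\Phi_r)_* w\rangle,
\]
where the last equality is Step 1 applied with $r^{-1}$. On the other side,
\[
\langle \bar\Phi_{r^{-1}}\IC\xi, w\rangle = \langle \IC\xi, (\Phi_r)_*w\rangle = -\langle \xi, \IT_*(\Phi_r)_*w\rangle.
\]
The two expressions agree. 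Base points also match: both covectors live over $\Phi_{r^{-1}}(g^{-1})$, by Step 1.

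\textbf{Main obstacle.} The only point needing care is conventions. One must confirm the exact form of $\IC$ in Theorem \ref{thm:CDW-formula} (in particular the sign, which is irrelevant here since it appears on both sides) and the convention $\bar\Phi_r=(\Phi_{r^{-1}})^*$. The inversion of the circle parameter must be tracked consistently through these. Centrality is used essentially, in Step 1, to move $i(\cdot,r)$ to the correct side after inversion.
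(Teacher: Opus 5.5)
Your proof is correct and is essentially the paper's. The paper proves exactly your Step 1 from centrality, writing $\Phi_r(g)=i(\TT(g),r)\cdot g$ where you use the right-multiplication form, and then simply asserts that the cotangent identity follows; your Step 2 verifies that explicitly. (One cosmetic slip: the equality $(\Phi_{r^{-1}})_*\IT_*w=\IT_*(\Phi_r)_*w$ in Step 2 is Step 1 with parameter $r$ itself, not with $r^{-1}$. This changes nothing.)
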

\begin{proof}
By the centrality, for any $g\in \tilde{\mathcal G}$ and $r\in S^1$ we have
$$\IT \Phi_r(g)=\IT(i(\TT(g),r)\cdot g)=\IT(g)\cdot i(\TT(g),r^{-1})=\Phi_{r^{-1}} \IT(g).$$
Then the desired identity follows.
\end{proof}

\begin{lemma}\label{lem:m-equivariant}
Let $(g,h)\in\tilde{\mathcal G}^{(2)}$, and let $\xi\in T^*_g\tilde{\mathcal G}$, $\eta\in T^*_h\tilde{\mathcal G}$ satisfy
$\SC(\xi)=\TC(\eta)$. Then for all $r_1,r_2\in\mathbb S^1$,
\[
\MC(\bar{\Phi}_{r_1}\xi,\bar{\Phi}_{r_2}\eta)
=
\bar{\Phi}_{r_1r_2}\MC(\xi,\eta).
\]
\end{lemma}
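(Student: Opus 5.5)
The plan is to test the claimed identity by pairing both sides against an arbitrary tangent vector at the product point. We use the Coste--Dazord--Weinstein characterization of the cotangent multiplication: for $(v,w)\in T_{(g,h)}\tilde{\mathcal G}^{(2)}$,
\[
\langle \MC(\xi,\eta),\MT_*(v,w)\rangle=\langle\xi,v\rangle+\langle\eta,w\rangle .
\]
By Lemma~\ref{lem:s-t-invariant}, $\SC(\bar\Phi_{r_1}\xi)=\SC(\xi)=\TC(\eta)=\TC(\bar\Phi_{r_2}\eta)$. So the pair on the left-hand side is composable, and $\MC(\bar\Phi_{r_1}\xi,\bar\Phi_{r_2}\eta)$ lies over $\Phi_{r_1}(g)\Phi_{r_2}(h)$. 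Centrality \eqref{Eqt:centrality} gives
\[
\Phi_{r_1}(g)\,\Phi_{r_2}(h)=g\, i(\ST(g),r_1)\, i(\TT(h),r_2)\, h=\Phi_{r_1r_2}(gh),
\]
so both sides are covectors at the same point $\Phi_{r_1r_2}(gh)$.

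The key geometric input is the identity
\[
\MT\circ(\Phi_{r_1}\times\Phi_{r_2})=\Phi_{r_1r_2}\circ\MT \quad \text{on } \tilde{\mathcal G}^{(2)},
\]
which is the pointwise computation above. Differentiating it gives, for $(v,w)$ tangent to $\tilde{\mathcal G}^{(2)}$ at $(g,h)$,
\[
\MT_*\bigl((\Phi_{r_1})_*v,(\Phi_{r_2})_*w\bigr)=(\Phi_{r_1r_2})_*\MT_*(v,w).
\]
Note that $\Phi_{r_1}\times\Phi_{r_2}$ preserves $\tilde{\mathcal G}^{(2)}$, since $\Phi$ preserves source and target. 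Consequently, every tangent vector at the composable point $(\Phi_{r_1}g,\Phi_{r_2}h)$ has the form $((\Phi_{r_1})_*v,(\Phi_{r_2})_*w)$ with $(v,w)$ tangent to $\tilde{\mathcal G}^{(2)}$. Because $\MT_*$ is surjective onto $T\tilde{\mathcal G}$, the vectors $\MT_*((\Phi_{r_1})_*v,(\Phi_{r_2})_*w)$ exhaust $T_{\Phi_{r_1r_2}(gh)}\tilde{\mathcal G}$.

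It then suffices to compare the two sides on such vectors. For the left-hand side,
\[
\bigl\langle \MC(\bar\Phi_{r_1}\xi,\bar\Phi_{r_2}\eta),\MT_*((\Phi_{r_1})_*v,(\Phi_{r_2})_*w)\bigr\rangle
=\langle(\Phi_{r_1^{-1}})^*\xi,(\Phi_{r_1})_*v\rangle+\langle(\Phi_{r_2^{-1}})^*\eta,(\Phi_{r_2})_*w\rangle
=\langle\xi,v\rangle+\langle\eta,w\rangle .
\]
For the right-hand side, using $\bar\Phi_{r_1r_2}=(\Phi_{(r_1r_2)^{-1}})^*$,
\[
\bigl\langle \bar\Phi_{r_1r_2}\MC(\xi,\eta),(\Phi_{r_1r_2})_*\MT_*(v,w)\bigr\rangle
=\langle\MC(\xi,\eta),\MT_*(v,w)\rangle=\langle\xi,v\rangle+\langle\eta,w\rangle .
\]
The two pairings agree, so the covectors agree.

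The only delicate point is the exhaustion/well-definedness step. One must check that $\Phi_{r_1}\times\Phi_{r_2}$ restricts to a diffeomorphism of $\tilde{\mathcal G}^{(2)}$; this follows from $\ST\circ\Phi_r=\ST$, $\TT\circ\Phi_r=\TT$, and the fact that its inverse is $\Phi_{r_1^{-1}}\times\Phi_{r_2^{-1}}$. One must also note that $\MT_*$ is onto at every point, which is standard since multiplication is a submersion. With these in hand, the computation above is routine.
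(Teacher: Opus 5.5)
Your proof is correct and follows essentially the same route as the paper: pair both sides against $\MT_*$ of tangent vectors at the composable pair, apply the CDW defining property of $\MC$, and differentiate the centrality identity $\MT(\Phi_{r_1}g,\Phi_{r_2}h)=\Phi_{r_1r_2}\MT(g,h)$. The only differences are cosmetic. You push vectors forward from $(g,h)$, whereas the paper pulls them back from $(\Phi_{r_1}g,\Phi_{r_2}h)$. You also spell out the composability, base-point and surjectivity checks that the paper leaves implicit.
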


\begin{proof}
Let $v\in T_{\Phi_{r_1}g}\tilde{\mathcal G}$ and $w\in T_{\Phi_{r_2} h}\tilde{\mathcal G}$ satisfy $\ST_*v=\TT_*w$.
Using the defining property of $\MC$ and $\bar{\Phi}_r=(R_{r^{-1}})^*$,
\[
\begin{aligned}
\langle \MC(\bar{\Phi}_{r_1}\xi,\bar{\Phi}_{r_2}\eta), \MT_*(v,w)\rangle
&=\langle \bar{\Phi}_{r_1}\xi,v\rangle+\langle \bar{\Phi}_{r_2}\eta,w\rangle\\
&=\langle \xi,(\Phi_{r_1^{-1}})_*v\rangle+\langle \eta,(\Phi_{r_2^{-1}})_*w\rangle\\
&=\langle \MC(\xi,\eta), \MT_*((\Phi_{r_1^{-1}})_*v,(\Phi_{r_2^{-1}})_*w)\rangle.
\end{aligned}
\]

Since the sequence \eqref{eq:central-groupoid} is an $\mathbb S^1$-central extension, i.e.,   satisfies Equation \eqref{Eqt:centrality}, the action $\Phi$ and the multiplication $\MT$ satisfy the compatible condition:
\[
\MT\big(\Phi_{r_1} g,\Phi_{r_2} h\big)=\,\Phi_{r_1r_2}\MT(g,h).
\]
Differentiating this identity gives
\[
\MT_*((\Phi_{r_1^{-1}})_*v,(\Phi_{r_2^{-1}})_*w)=(\Phi_{(r_1r_2)^{-1}})_*\MT_*(v,w).
\]
Therefore,
\[
\langle \MC(\bar{\Phi}_{r_1}\xi,\bar{\Phi}_{r_2}\eta), \MT_*(v,w)\rangle
=
\langle (\Phi_{(r_1r_2)^{-1}})^*\MC(\xi,\eta),\MT_*(v,w)\rangle,
\]
which is exactly
$\MC(\bar{\Phi}_{r_1}\xi,\bar{\Phi}_{r_2}\eta)
=
\bar{\Phi}_{r_1r_2}\MC(\xi,\eta)$.
\end{proof}

Now set $\Gamma:=K/\mathbb S^1 = \mu^{-1}(1)/\mathbb{S}^1$. Denote by $\bar{j}:K\hookrightarrow T^*\tilde{\mathcal G}$
the inclusion and by $\bar{p}:K\to\Gamma$ the quotient map.
Marsden--Weinstein reduction yields a unique symplectic form $\omega_\Gamma\in\Omega^2(\Gamma)$ such that
\[
\bar{p}^*\omega_\Gamma=\,\bar{j}^*\omega_{can}.
\]
By Lemmas \ref{lem:s-t-invariant} and \ref{lem:m-equivariant}, the $\mathbb S^1$-action on $T^*\tilde{\mathcal G}$
is multiplicative. Therefore, the Reduction Lemma applies and shows the following:
\begin{thm}
$(\Gamma,\omega_\Gamma)\rightrightarrows A^*$ is a symplectic groupoid.
\end{thm}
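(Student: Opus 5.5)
The plan is to apply the Reduction Lemma (Lemma \ref{lem: red-lem}) to the Hamiltonian $\mathbb S^1$-space $(T^*\tilde{\mathcal G},\omega_{\rm can},\bar\Phi,\mu)$ at the level $c=1$, and then identify the base $N_1$ with $A^*$.

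First, hypothesis (1) of the Reduction Lemma is the moment map identity \eqref{Eqt:canonical-form-moment}, which holds for the cotangent lift of $\Phi$ with $\mu(\xi)=\langle\xi,\psi(1)\rangle$. Hypothesis (2) asks that $1$ be a regular value and that the action on $\mu^{-1}(1)$ be free and proper. Regularity holds because on each fibre $T^*_g\tilde{\mathcal G}$ the map $\mu$ is a nonzero linear functional, since $\psi(1)_g\neq 0$; hence $d\mu$ is nonzero already along the vertical directions. Freeness and properness are inherited from the principal action on $\tilde{\mathcal G}$, because $\bar\Phi_r$ covers $\Phi_r$ and the footpoint projection is equivariant.

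Next comes hypothesis (3), multiplicativity. The source and target invariance is Lemma \ref{lem:s-t-invariant}, compatibility with multiplication is Lemma \ref{lem:m-equivariant}, and compatibility with inversion is the lemma $\IC\bar\Phi_r=\bar\Phi_{r^{-1}}\IC$. Hypothesis (4) is the preceding corollary: $K=\mu^{-1}(1)=\SC^{-1}(N_1)=\TC^{-1}(N_1)$ is the restriction Lie subgroupoid over $N_1={\rm pr}_{\mathbb R}^{-1}(1)$, as follows from Lemma \ref{lem:mu-via-s-t}. With all four hypotheses verified, the Reduction Lemma gives a symplectic groupoid $(\Gamma,\omega_\Gamma)\rightrightarrows N_1$ with $\bar p^*\omega_\Gamma=\bar j^*\omega_{\rm can}$.

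It remains to identify the base with $A^*$. The splitting \eqref{eq:splitting} determined by the normalized connection $\theta$ gives $\tilde A^*\cong A^*\times\mathbb R$, with ${\rm pr}_{\mathbb R}$ becoming the second projection, since $\theta^\flat(\varepsilon)=1$. Hence the affine embedding $j$ is a diffeomorphism of $A^*$ onto the affine hypersurface $N_1$. Transporting the structure along $j$ gives $(\Gamma,\omega_\Gamma)\rightrightarrows A^*$.

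The main obstacle is confirming that $N_1$ really is an embedded submanifold, which requires ${\rm pr}_{\mathbb R}$ to be a submersion. This is immediate because ${\rm pr}_{\mathbb R}$ is fibrewise linear and nonzero, as $\varepsilon$ is nowhere vanishing. A second check is needed in the corollary: restricting to $\SC^{-1}(N_1)\cap\TC^{-1}(N_1)$ must give smooth source and target submersions onto $N_1$. I would settle this by noting that the restricted source and target remain submersions, since $\SC,\TC$ are submersions transverse to $N_1$.
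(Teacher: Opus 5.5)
Your proposal is correct and matches the paper's argument. The paper likewise applies the Reduction Lemma at level $1$: regularity and freeness come from the principal $\mathbb S^1$-action, the subgroupoid condition from $K=\SC^{-1}(N_1)=\TC^{-1}(N_1)$ via Lemma \ref{lem:mu-via-s-t}, and multiplicativity from Lemmas \ref{lem:s-t-invariant}, \ref{lem:m-equivariant} and the inverse-compatibility lemma. Your additional checks are details the paper leaves implicit: that ${\rm pr}_{\mathbb R}$ is a submersion, that the restricted source and target maps remain submersions, and that $j$ identifies $A^*$ with $N_1$ because $\theta^\flat(\varepsilon)=1$.
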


\subsection{Explicit descriptions on $T^*\mathcal{G}$}
We shall see that the reduced groupoid $\Gamma$ is an example of deformed cotangent groupoids.
\subsubsection{A diffeomorphism between $\Gamma$ and $T^*\mathcal G$, after Satzer--Marsden--Kummer}
We sketch the identification of the reduced space
$\Gamma=\mu^{-1}(1)/\mathbb S^1$ with the cotangent bundle $T^*\mathcal G$. For any $g\in \tG$, we have the dualized derivative, $(dp|_g)^*:T^*_{p(g)}\mathcal{G}\rightarrow T^*_g\tG$. The choice of connection 1-form $\theta$ for the principal $\mathbb S^1$-bundle $p:\tG\rightarrow \mathcal{G}$ is important for this step.

% Fix $g\in\tG$ and consider the orbit map
% \[
% \Phi^{g}:\mathbb S^1\longrightarrow \tG,
% \qquad
% r\longmapsto \Phi_r(g).
% \]
% Since $p:\tG\to\mathcal G$ is a principal $\mathbb S^1$-bundle, there is a short exact sequence
% \[
% 0\longrightarrow \mathbb R
% \xrightarrow{\, (d\Phi^{g})_{1}\,}
% T_g\tG
% \xrightarrow{\, dp|_{g}\,}
% T_{p(g)}\mathcal G
% \longrightarrow 0,
% \]
% where we identify $\mathfrak{s}^1\simeq\mathbb R$.

% Dualizing yields
% \begin{equation}\label{ses:diffeo}
% 0\longrightarrow
% T^*_{p(g)}\mathcal G
% \xrightarrow{\, (dp|_{g})^*\,}
% T^*_g\tG
% \xrightarrow{\, (d\Phi^{g})_{1}{}^*\,}
% \mathbb R
% \longrightarrow 0.
% \end{equation}

% Recall that our infinitesimal generator is defined using $e^{r}$, hence
% \[
% \psi(1)_g
% =
% \left.\frac{d}{dr}\right|_{r=0}
% \Phi_{e^{r}}(g)
% =
% (d\Phi^{g})_{1}(1).
% \]
% Therefore, for any $\xi\in T^*_g\tG$,
% \begin{equation}\label{eq:diffeo1}
% (d\Phi^{g})_{1}{}^*(\xi)
% =
% \langle \xi,(d\Phi^{g})_{1}(1)\rangle
% =
% \langle \xi,\psi(1)_g\rangle
% =
% \mu(\xi).
% \end{equation}

% Fix a principal connection $1$-form $\theta\in\Omega^1(\tG)$,
% normalized by
% \[
% \theta(\psi(1))=1.
% \]
% Then by \eqref{eq:diffeo1},
% \begin{equation}\label{eq:diffeo2}
% (d\Phi^{g})_{1}{}^*(\theta_g)
% =
% \langle \theta_g,(d\Phi^{g})_{1}(1)\rangle
% =
% \langle \theta_g,\psi(1)_g\rangle
% =
% 1.
% \end{equation}

Now consider a map
\[
\beta:T^*\mathcal G\longrightarrow \Gamma.
\]
Given $\eta\in T^*_{p(g)}\mathcal G$, set
\[
\beta(\eta)
:=
\Bigl[
(dp|_{g})^*\eta+\theta_g
\Bigr]
\in \mu^{-1}(1)/\mathbb S^1.
\]
% Using \eqref{eq:diffeo1} and \eqref{eq:diffeo2}, we compute
% \[
% \mu\bigl((dp|_{g})^*\eta+\theta_g\bigr)
% =
% (d\Phi^{g})_{1}{}^*
% \bigl((dp|_{g})^*\eta+\theta_g\bigr)
% =
% 0+1
% =
% 1,
% \]
% so $(dp|_{g})^*\eta+\theta_g\in\mu^{-1}(1)$.

% To see that $\beta$ is independent of the choice of $g$ over $p(g)$,
% let $g'=\Phi_r(g)$. Since $p\circ \Phi_r=p$, we have
% $(dp|_{g'})^* = (d\Phi_{r^{-1}}|_{g})^*\circ (dp|_{g})^*$, hence
% \[
% (dp|_{g'})^*\eta
% =
% \bar{\Phi}_r\bigl((dp|_{g})^*\eta\bigr).
% \]
% Moreover, $\theta$ is $\mathbb S^1$-invariant, so $\theta_{g'}=\bar{\Phi}_r(\theta_g)$.
% Therefore
% \[
% (dp|_{g'})^*\eta+\theta_{g'}
% =
% \bar{\Phi}_r\bigl((dp|_{g})^*\eta+\theta_g\bigr),
% \]
% which represents the same element in $\Gamma$.

\begin{thm}[\cite{MR0448428,MR1171218,MR0604285}]
The map $\beta:T^*\mathcal G\to\Gamma$ is a well-defined diffeomorphism.
\end{thm}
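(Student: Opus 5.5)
The plan is to show in turn that $\beta$ lands in $\mu^{-1}(1)$, that it is independent of the lift $g$, that it is a smooth bijection, and finally that its inverse is smooth.

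\emph{Landing in the level set.} Fix $g\in\tG$ over a point of $\mathcal G$. Since $p\circ\Phi_r=p$, the fundamental field $\psi(1)_g$ lies in $\ker(dp|_g)$. Hence
\[
\mu\bigl((dp|_g)^*\eta+\theta_g\bigr)=\langle\eta,dp|_g\,\psi(1)_g\rangle+\langle\theta_g,\psi(1)_g\rangle=0+1=1,
\]
using the normalization $\langle\theta,\psi(1)\rangle=1$ from Lemma~\ref{lem:unit-normalization}.

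\emph{Independence of the lift.} Let $g'=\Phi_r(g)$. Differentiating $p\circ\Phi_{r^{-1}}=p$ at $g'$ gives $(dp|_{g'})^*\eta=(\Phi_{r^{-1}})^*(dp|_g)^*\eta=\bar\Phi_r\bigl((dp|_g)^*\eta\bigr)$. Invariance of the principal connection gives $\theta_{g'}=\bar\Phi_r(\theta_g)$. The two representatives therefore lie in the same $\mathbb S^1$-orbit, so $\beta$ is well defined.

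\emph{Smoothness.} I will write $\beta$ as the composition of the smooth map
\[
\tilde\beta:\;p^*(T^*\mathcal G)=\tG\times_{\mathcal G}T^*\mathcal G\longrightarrow\mu^{-1}(1),\qquad (g,\eta)\longmapsto(dp|_g)^*\eta+\theta_g,
\]
with the quotient $\bar p$. The map $\tilde\beta$ is $\mathbb S^1$-equivariant by the computation above. Since $p^*(T^*\mathcal G)\to T^*\mathcal G$ is itself a principal $\mathbb S^1$-bundle (a surjective submersion), $\beta$ is smooth.

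\emph{Bijectivity via a fibrewise inverse.} The main step is to show that $\tilde\beta$ is an $\mathbb S^1$-equivariant diffeomorphism, and I will do this fibrewise. Over each $g$ there is the exact sequence
\[
0\to T^*_{p(g)}\mathcal G\xrightarrow{(dp|_g)^*}T^*_g\tG\xrightarrow{\langle\cdot,\psi(1)_g\rangle}\mathbb R\to0.
\]
It is exact because $dp|_g$ is surjective with kernel spanned by $\psi(1)_g$, the action being free. Given $\xi\in T^*_g\tG$ with $\mu(\xi)=1$, the covector $\xi-\theta_g$ pairs to zero with $\psi(1)_g$. It therefore equals $(dp|_g)^*\eta$ for a unique $\eta$. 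This gives the inverse $\xi\mapsto(g,\eta)$. In other words, over each $g$ the map $\tilde\beta$ is an affine isomorphism onto the affine hyperplane $\mu^{-1}(1)\cap T^*_g\tG$. The inverse is smooth because $\eta$ is obtained from the smooth vector-bundle morphism $\xi\mapsto\xi-\theta_{\mathrm{pr}(\xi)}$ followed by the inverse of the injective bundle map $(dp)^*$ onto its image, which is the subbundle $\mathrm{Ann}(\psi(1))$. Consequently $\tilde\beta$ is an equivariant diffeomorphism between principal $\mathbb S^1$-bundles over $T^*\mathcal G$ and $\Gamma$. It therefore descends to a diffeomorphism $\beta$, whose inverse is induced by $\tilde\beta^{-1}$ followed by the projection $(g,\eta)\mapsto\eta$.

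\emph{Expected obstacle.} The only delicate point is smoothness of the inverse. Working at the $\mathbb S^1$-equivariant level upstairs and then descending along the principal bundle projections $\bar p$ and $\tG\times_{\mathcal G}T^*\mathcal G\to T^*\mathcal G$ handles this cleanly. It avoids choosing local sections of $p$, although local sections would give an alternative check.
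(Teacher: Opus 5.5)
Your proposal is correct and complete. The level-set check $\mu\bigl((dp|_g)^*\eta+\theta_g\bigr)=1$, together with lift-independence via $\bar\Phi_r$ and $\mathbb S^1$-invariance of $\theta$, is exactly what underlies the paper's definition of $\beta$. The paper does not prove the diffeomorphism claim itself; it defers it to Satzer, Kummer and Marsden. Your remaining steps are the standard connection-shift argument of those references: subtracting $\theta_g$, inverting $(dp|_g)^*$ onto $\mathrm{Ann}(\psi(1)_g)$ to make $\tilde\beta$ an equivariant diffeomorphism, and descending along the two principal $\mathbb S^1$-bundles. So you have supplied the proof the paper only cites.
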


\begin{remark}
By replacing the term $\theta_g$ by $r\theta_g$, for $r\in \mathbb{R}$, one may obtain a diffeomorphism $T^*\mathcal{G}\cong \mu^{-1}(r)/\mathbb{S}^1$.   
\end{remark}

\subsubsection{Description of the reduced symplectic groupoid $\Gamma$}

We describe the structure maps of $(\Gamma,\omega_{\Gamma})$ under the diffeomorphism $\beta:T^*\mathcal{G}\rightarrow \Gamma$. According to \cite{stacks_B-X}, the 1-form $\partial\theta \in \Omega^1(\tilde{\mathcal G}^{(2)})$ is invariant and horizontal under the $\mathbb{S}^1\times \mathbb{S}^1$-action on the bundle $\tilde{p}:\tG^{(2)}\rightarrow \mathcal{G}^{(2)}$. Therefore, it is basic and descends uniquely to a 1-form $\gamma \in \Omega^1({\mathcal G}^{(2)})$, satisfying 
\begin{equation}\label{eq:descend-theta}
\tilde{p}^*\gamma = \partial \theta.   
\end{equation}

Now we have obtained a family of examples of deformed cotangent groupoids via central extension and reduction. The explicit source, target, multiplication, unit, and inverse formulas are derived in Appendix B. They coincide with the defining formulas of the \(\gamma\)-deformed cotangent groupoid.

\begin{thm}\label{thm: red-grpd}
Assume there is an $\mathbb{S}^1$-central extension of Lie groupoids $\tilde{\mathcal{G}}$ over $\mathcal{G}$. The reduced groupoid $T^*\mathcal{G}\cong \mu^{-1}(1)/\mathbb{S}^1$ is the $\gamma$-deformed cotangent groupoid $(T^*\mathcal{G})_{\gamma}$.
\end{thm}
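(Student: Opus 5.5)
The plan is to transport every structure map of $\Gamma$ along $\beta$ and compare it with \eqref{eq:groupoid-def-1}--\eqref{eq:inverse-map}. Since $\bar p\colon K\to\Gamma$ is a groupoid morphism (Reduction Lemma), each map can be computed on representatives. For $\eta\in T^*_{p(g)}\mathcal G$ we use the representative $\tilde\xi=(dp|_g)^*\eta+\theta_g\in K$. We also use the identification $N_1=j(A^*)\cong A^*$ coming from the splitting \eqref{eq:splitting}: an element $\alpha\in N_1\subset\tilde A^*$ is recorded by its values on horizontal lifts $\tilde a\in\tilde A$, namely those with $\theta^\flat(\tilde a)=0$ and $p_*\tilde a=a$. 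Everything then reduces to evaluating the Coste--Dazord--Weinstein formulas (Theorem \ref{thm:CDW-formula}) for $T^*\tilde{\mathcal G}$ on these representatives and pushing through $p$.

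\textbf{Source and target.} For the target, take $b\in A_{\TG(g)}$ with horizontal lift $\tilde b$. We get $\langle\TC(\tilde\xi),\tilde b\rangle=\langle\eta,p_*(R_g)_*\tilde b\rangle+\theta((R_g)_*\tilde b)$. Since $p$ is a morphism, $p_*(R_g)_*\tilde b=(R_{p(g)})_*b$, which gives $\langle\TG_0(\eta),b\rangle$. For the correction term, apply $\partial\theta=\tilde p^*\gamma$ to the pair $(\tilde b,0_g)$ at $(\UT\TT(g),g)$:
\[
\gamma(b,0_{p(g)})=\theta(\tilde b)+\theta(0_g)-\theta\bigl((R_g)_*\tilde b\bigr)=-\theta\bigl((R_g)_*\tilde b\bigr),
\]
using horizontality of $\tilde b$ and $\UT^*\theta=0$ through the sign conventions of $\partial$. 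This reproduces \eqref{eq:groupoid-def-2}. The source is handled symmetrically with the pair $(0_g,\IT_*\tilde a)$ and the formula $\SC(\xi)=(L_g)^*(\cdot)$ on $\tilde a-\UT_*\tilde\rho(\tilde a)$, which gives \eqref{eq:groupoid-def-1}. Here we use $\IT_*\tilde a=-\tilde a+\UT_*\TT_*\tilde a$ and the vanishing of $\theta$ on $\UT_*TM$.

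\textbf{Multiplication.} Let $(\tilde\xi_1,\tilde\xi_2)\in K^{(2)}$ be representatives and let $\tilde w_i$ be lifts of $w_i$ with $\ST_*\tilde w_1=\TT_*\tilde w_2$. Such lifts exist because $p^{(2)}\colon\tilde{\mathcal G}^{(2)}\to\mathcal G^{(2)}$ is a submersion. Then
\[
\langle\MC(\tilde\xi_1,\tilde\xi_2),\MT_*(\tilde w_1,\tilde w_2)\rangle=\langle\eta_1,w_1\rangle+\langle\eta_2,w_2\rangle+\theta(\tilde w_1)+\theta(\tilde w_2),
\]
and the last two terms equal $\gamma(w_1,w_2)+\theta(\MT_*(\tilde w_1,\tilde w_2))$. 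So $\MC(\tilde\xi_1,\tilde\xi_2)-\theta_{gh}$ annihilates vertical vectors, hence equals $(dp)^*\zeta$ for some $\zeta$. This $\zeta$ satisfies exactly \eqref{eq:groupoid-def-3}, so $\beta(\zeta)$ is the reduced product.

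\textbf{Unit and inverse.} For units, $\UC(\alpha)$ at $\UT(m)$ is the covector killing $T_mM$ and restricting to $\alpha$ on $\tilde A$. For $\alpha=j(\eta)$, this equals $(dp)^*\UG_0(\eta)+\theta$, because $\theta$ vanishes on $\UT_*TM$ and takes the value $1$ on $\varepsilon$. This gives \eqref{eq:unit-map}. For the inverse, $\IC(\tilde\xi)=(\IT_*)^*\tilde\xi$, and its $\theta$-part becomes $\theta\circ\IT_*-\theta$. Applying $\partial\theta$ to $(\IT_*\tilde w,\tilde w)$ and using $\MT_*(\IT_*\tilde w,\tilde w)\in\UT_*TM$ gives $-\gamma(\iota_*w,w)$, matching \eqref{eq:inverse-map}. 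The sign must be checked against the convention for $\IC$ and for $\bar\Phi$ on inverses.

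\textbf{Main obstacle.} The main risk is the sign and basepoint bookkeeping, in particular:
\begin{itemize}
\item the convention $\partial\theta=\mathrm{pr}_1^*\theta+\mathrm{pr}_2^*\theta-\MT^*\theta$ versus the signs chosen in \eqref{eq:groupoid-def-1}--\eqref{eq:inverse-map};
\item showing that the identification $N_1\cong A^*$ via horizontal lifts is compatible with $\theta^\flat$ in the source formula, where the $\tilde\rho$-correction appears.
\end{itemize}
I would first fix the convention for $\partial$ on the target computation, where it is simplest, and then propagate it to the other maps. As a consistency check, by Theorem \ref{thm:groupoid-cond} the resulting $\gamma$ must satisfy $\partial\gamma=0$ and the normalization conditions. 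These follow independently from $\partial^2=0$ and $\UT^*\theta=0$.
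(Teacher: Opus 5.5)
Your proposal is correct up to one sign slip, and it follows the paper's proof in Appendix~B essentially step by step. It uses the same representatives $(dp|_g)^*\eta+\theta_g$ and the same Coste--Dazord--Weinstein formulas for $T^*\tilde{\mathcal G}$, and it applies $\partial\theta=\tilde p^*\gamma$ and $\UT^*\theta=0$ to the same pairs $(\tilde b,0_g)$, $(0_g,\IT_*\tilde a)$, $(\tilde w_1,\tilde w_2)$, $(\IT_*\tilde w,\tilde w)$. The slip is in the inverse: the CDW inverse is $\IC(\tilde\xi)=-\IT^*\tilde\xi$, so the correction term is $-\theta\circ\IT_*-\theta=-(\partial\theta)(\IT_*\cdot\,,\cdot)$, which is what actually yields your correct final term $-\gamma(\iota_*w,w)$ (with your sign the covector would lie in $\mu^{-1}(-1)$, since $\IT_*\psi(1)=-\psi(1)$).
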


% \begin{remark}
% Although the well-definedness condition \ref{eq:semigroupoid-cond} in the downstairs description appears technical, in the reduction examples it has a simple geometric origin: the relevant kernel directions admit horizontal lifts, and the connection form vanishes on horizontal vectors.
% \end{remark}

\subsubsection{The canonical form with magnetic term}
Let $\theta\in\Omega^1(\tG)$ be a principal connection $1$--form on the $\mathbb S^1$--bundle
$p:\tG\to\mathcal G$. Let $\Omega=d\theta\in\Omega^2(\tG)$ denote its curvature, which is horizontal and $\mathbb S^1$--invariant, hence descends to a
$2$--form $\omega_B\in\Omega^2(\mathcal G)$, uniquely characterized by
\[
p^*\omega_B=\Omega.
\]
Via the diffeomorphism $\beta:T^*\mathcal G\to\Gamma$, the reduced symplectic form pulls back to
\[
\omega_M
=
\omega_{can}^{\mathcal G}
+
(pr_{\mathcal{G}})^*\omega_B,
\]
where $\omega_{can}^{\mathcal G}$ denotes the canonical symplectic form on
$T^*\mathcal G$ and $pr_{\mathcal{G}}:T^*\mathcal G\to\mathcal G$ is the bundle projection.
The additional term $(pr_{\mathcal{G}})^*\omega_B$ is the classical \emph{magnetic term}
associated with the curvature of the chosen connection.

The symplectic form $\omega_M$ is also multiplicative on the deformed cotangent groupoid:
$$
\tilde{p}^*(\partial \omega_B-d\gamma) = \partial(d\theta)-d(\partial\theta)=0.
$$
Since $\tilde{p}$ is a surjective submersion, we have that $\partial \omega_B = d\gamma$, and Proposition \ref{prop: multiplicative-cond} applies. Therefore, we have that

\begin{propn}\label{prop: red-multiplicative}
The reduced groupoid $T^*\mathcal{G}\cong \mu^{-1}(1)/\mathbb{S}^1$ is also a symplectic groupoid, endowed with the form  
\[
\omega_M=\omega_{can}^{\mathcal G}
+
(pr_{\mathcal{G}})^*\omega_B
\]
\end{propn}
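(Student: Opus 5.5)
The plan is to establish the proposition in three parts. First we identify the pulled-back form $\beta^*\omega_\Gamma$ with $\omega_M$. Next we check that $\omega_M$ is closed and nondegenerate. Finally we deduce multiplicativity, either from Proposition \ref{prop: multiplicative-cond} or directly from the Reduction Lemma.

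\emph{Step 1: $\beta^*\omega_\Gamma=\omega_M$.} Work upstairs on the pullback bundle $E:=p^*T^*\mathcal G=\{(g,\eta): g\in\tG,\ \eta\in T^*_{p(g)}\mathcal G\}$. This space has a surjective submersion $q:E\to T^*\mathcal G$, $(g,\eta)\mapsto\eta$, and a projection $\pi:E\to\tG$. Define
\[
\tilde\beta:E\to K=\mu^{-1}(1),\qquad \tilde\beta(g,\eta)=(dp|_g)^*\eta+\theta_g .
\]
This map lands in $K$ because $\theta(\psi(1))=1$ and $(dp)^*\eta$ kills vertical vectors. By construction $\bar p\circ\tilde\beta=\beta\circ q$.

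The main computation is the pullback of the Liouville form. For $V\in T_{(g,\eta)}E$ with base projection $v=\pi_*V\in T_g\tG$, the tautological property of $\vartheta_L$ gives
\[
(\tilde\beta^*\vartheta_L)(V)=\langle (dp)^*\eta+\theta_g,v\rangle=\langle\eta,p_*v\rangle+\theta(v),
\]
so that $\tilde\beta^*\vartheta_L=q^*\vartheta_L^{\mathcal G}+\pi^*\theta$. Here we use that ${\rm pr}_{\mathcal G}\circ q=p\circ\pi$. Taking $d$ yields
\[
\tilde\beta^*\bar j^*\omega_{\rm can}=q^*\omega^{\mathcal G}_{\rm can}+\pi^*p^*\omega_B=q^*\bigl(\omega^{\mathcal G}_{\rm can}+{\rm pr}_{\mathcal G}^*\omega_B\bigr).
\]
Combining this with $\bar p^*\omega_\Gamma=\bar j^*\omega_{\rm can}$ and $\bar p\circ\tilde\beta=\beta\circ q$, we obtain $q^*\beta^*\omega_\Gamma=q^*\omega_M$. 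Since $q$ is a surjective submersion, $\beta^*\omega_\Gamma=\omega_M$.

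\emph{Step 2: $\omega_M$ is symplectic.} This follows immediately from Step 1, since $\omega_\Gamma$ is a Marsden--Weinstein reduced symplectic form and $\beta$ is a diffeomorphism. One can also see it directly. The form is closed because $p^*d\omega_B=d\,d\theta=0$ and $p$ is a submersion. It is nondegenerate because adding a horizontal (basic) term to $\omega_{\rm can}$ does not affect nondegeneracy.

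\emph{Step 3: multiplicativity.} By Theorem \ref{thm: red-grpd}, $\beta$ identifies $(T^*\mathcal G)_\gamma$ with $\Gamma$ as groupoids. The computation $\tilde p^*(\partial\omega_B-d\gamma)=\partial d\theta-d\partial\theta=0$ uses that $\partial$ commutes with $d$ and with pullback along the groupoid morphism $p$. Since $\tilde p:\tG^{(2)}\to\mathcal G^{(2)}$ is a surjective submersion, it follows that $\partial\omega_B=d\gamma$, and Proposition \ref{prop: multiplicative-cond} gives that $\omega_M$ is multiplicative. As a consistency check, this also follows from the Reduction Lemma: $\omega_\Gamma$ is multiplicative on $\Gamma$, and $\beta$ is a groupoid isomorphism.

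The main obstacle is Step 1, specifically keeping track of base points in the Liouville-form computation and justifying $\bar p\circ\tilde\beta=\beta\circ q$, that is, independence of the lift $g$. The latter follows from the $\mathbb S^1$-invariance of $\theta$ and the relation $(dp|_{\Phi_r g})^*=\bar\Phi_r\circ(dp|_g)^*$.
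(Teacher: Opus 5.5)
Your proposal is correct and follows the paper's route. You identify $\beta^*\omega_\Gamma$ with $\omega_M$, deduce $\partial\omega_B=d\gamma$ from $\tilde p^*(\partial\omega_B-d\gamma)=\partial d\theta-d\partial\theta=0$, and then invoke Proposition~\ref{prop: multiplicative-cond}. The only difference is that your Step 1 actually proves the identification, via the Liouville-form pullback $\tilde\beta^*\vartheta_L=q^*\vartheta_L^{\mathcal G}+\pi^*\theta$ on $p^*T^*\mathcal G$, whereas the paper simply asserts it as the classical Satzer--Marsden--Kummer magnetic-term result.
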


Starting from any $\mathbb{S}^1$-central extension of Lie groupoids $\widetilde{\mathcal{G}}\rightarrow \mathcal{G}$, the reduction procedure produces a magnetic cotangent groupoid $T^*\mathcal{G}$ associated to the pair $(\gamma, \omega_B)$. Such a description depends on the choice of a normalized principal connection $\theta$ on the $\mathbb{S}^1$-bundle $\widetilde{\mathcal{G}}\rightarrow \mathcal{G}$. Its curvature descends to a 2-form $\omega_B$ on $\mathcal{G}$, while the failure of the connection to be multiplicative descends to the deformation 1-form $\gamma.$

Note that different choices of connections produce gauge-equivalent presentations of magnetic groupoids: let $\theta_1$ and $\theta_2$ be two connection 1-forms. Then $\theta_1-\theta_2$ is basic and descends to a 1-form $\alpha$ on $\mathcal{G}$. In particular, $d\theta_1-d\theta_2$ descends to $d\alpha$, and $\partial \theta_1-\partial\theta_2$ descends to $\partial \alpha.$ It follows that
\begin{equation*}
\gamma_1-\gamma_2 = \partial \alpha, \quad \text{and}\quad\omega_{B_1}-\omega_{B_2}=d\alpha.
\end{equation*}
Thus we have the following result.

\begin{propn}
Let \(\theta_1\) and \(\theta_2\) be normalized principal connections on
\(\widetilde{\mathcal G}\to\mathcal G\), and let
\((\gamma_i,\omega_{B,i})\) be the corresponding pairs. Then the fibre
translation
\[
F(\xi_g)=\xi_g+\alpha_g
\]
defines a symplectic groupoid isomorphism
\[
F:
\left(
(T^*\mathcal G)_{\gamma_1},
\omega_{\mathrm{can}}+pr_{\mathcal G}^*\omega_{B,1}
\right)
\longrightarrow
\left(
(T^*\mathcal G)_{\gamma_2},
\omega_{\mathrm{can}}+pr_{\mathcal G}^*\omega_{B,2}
\right).
\]
In particular, the magnetic cotangent model of the reduced symplectic
groupoid is independent of the choice of normalized connection up to gauge
equivalence.
\end{propn}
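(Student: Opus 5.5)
The plan is to reduce the statement to Proposition~\ref{prop:gauge-2}, and the first task is to produce the $1$-form $\alpha$. Since $\theta_1$ and $\theta_2$ are both principal connections, their difference $\theta_1-\theta_2$ vanishes on the fundamental vector field $\psi(1)$, because both pair to $1$ with it. It is also $\mathbb S^1$-invariant. Hence it is basic, and there is a unique $\alpha\in\Omega^1(\mathcal G)$ with $p^*\alpha=\theta_1-\theta_2$.

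Next I would establish the two coboundary relations.

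\begin{enumerate}
\item Since $d$ commutes with $p^*$, we get $p^*(\omega_{B,1}-\omega_{B,2})=d\theta_1-d\theta_2=p^*d\alpha$. Because $p$ is a surjective submersion, this gives $\omega_{B,1}-\omega_{B,2}=d\alpha$.
\item Since $p$ is a groupoid morphism, the simplicial differential commutes with pullback: $\partial\circ p^*=\tilde p^*\circ\partial$, with $\tilde p\colon\tilde{\mathcal G}^{(2)}\to\mathcal G^{(2)}$. Using \eqref{eq:descend-theta} for each $\theta_i$, we get $\tilde p^*(\gamma_1-\gamma_2)=\partial\theta_1-\partial\theta_2=\partial p^*\alpha=\tilde p^*\partial\alpha$.
\item The map $\tilde p$ is a surjective submersion, being an $\mathbb S^1\times\mathbb S^1$-principal bundle. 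Therefore $\gamma_1-\gamma_2=\partial\alpha$.
\end{enumerate}

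For the relations to be read correctly, the orientation of the gauge map has to be right. Proposition~\ref{prop:gauge-2} requires $\gamma_1-\gamma_2=\partial\alpha$ and $\omega_{B_1}-\omega_{B_2}=d\alpha$ for the map $(T^*\mathcal G)_{\gamma_1}\to(T^*\mathcal G)_{\gamma_2}$. I would check the sign conventions of Proposition~\ref{prop: gauge-1} against this: its homomorphism computation gives $\gamma_1-\gamma_2=\partial\alpha$ exactly when $F(\xi)=\xi+\alpha$. If the symplectic pullback formula $F^*(\omega_{\rm can}+{\rm pr}^*\omega_B)=\omega_{\rm can}+{\rm pr}^*(\omega_B+d\alpha)$ produces the opposite sign for $\omega_B$, I would recheck the sign in $F^*\vartheta_L=\vartheta_L+{\rm pr}^*\alpha$ under the paper's convention $\omega_{\rm can}=d\vartheta_L$. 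If needed, I would replace $\alpha$ by $-\alpha$ consistently in both relations, which amounts to exchanging the roles of $\theta_1$ and $\theta_2$.

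I would also confirm the hypotheses of Proposition~\ref{prop:gauge-2}. Both $(T^*\mathcal G)_{\gamma_i}$ are $\gamma_i$-deformed cotangent groupoids with multiplicative magnetic forms, by Theorem~\ref{thm: red-grpd} and Proposition~\ref{prop: red-multiplicative}. The normalization $\UT^*\theta_i=0$ gives $\UG^*\alpha=0$; this is consistent with, though not strictly needed beyond, the unit compatibility already built into the proof of Proposition~\ref{prop: gauge-1}. Then $F$ is a Lie groupoid isomorphism intertwining the magnetic symplectic forms, hence an isomorphism of symplectic groupoids.

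The main obstacle is the sign bookkeeping between the Liouville pullback and the direction of $F$. Everything else is formal naturality of $d$ and $\partial$ under $p$. As a conceptual cross-check, the fibre translation by $\alpha$ should correspond, under the two Satzer--Marsden--Kummer diffeomorphisms $\beta_i$, to the identity on $\Gamma$. This follows from $(dp)^*\eta+\theta_{1}=(dp)^*(\eta+\alpha)+\theta_2$ at each point, which also pins down the direction: $\beta_2\circ F=\beta_1$. In particular $F$ is automatically a symplectic groupoid isomorphism, and this gives an independent proof.
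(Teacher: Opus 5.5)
Your proposal is correct and follows the paper's argument. Like the paper, you descend $\theta_1-\theta_2$ to $\alpha$, use naturality of $d$ and $\partial$ under $p$ to get $\gamma_1-\gamma_2=\partial\alpha$ and $\omega_{B,1}-\omega_{B,2}=d\alpha$, and invoke Proposition~\ref{prop:gauge-2}; your cross-check $\beta_1=\beta_2\circ F$ is the same one the paper gives. The sign contingency you worried about does not arise: with $p^*\alpha=\theta_1-\theta_2$ and $F^*\vartheta_L=\vartheta_L+{\rm pr}_{\mathcal G}^*\alpha$, both relations already come out in exactly the orientation Proposition~\ref{prop:gauge-2} requires.
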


\begin{proof}
The identities above and Proposition~\ref{prop:gauge-2} give the result.
Equivalently, if
\[
\beta_i:T^*\mathcal G\longrightarrow \mu^{-1}(1)/\mathbb S^1
\]
is the identification determined by \(\theta_i\), then
\[
\beta_1=\beta_2\circ F .
\]
Thus \(F=\beta_2^{-1}\circ\beta_1\) is the change of magnetic
cotangent coordinates induced by changing the connection.
\end{proof}

\subsubsection{A deformation of symplectic groupoids}

One may consider reduction at general level sets of $\mu$. The reduction lemma applies and there is again a diffeomorphism $T^*\mathcal{G}\cong \mu^{-1}(r)/\mathbb{S}^1$, for any $r\in \mathbb{R}$. In this case, we again get a deformed cotangent groupoid, which is also a symplectic groupoid.

\begin{propn}\label{thm:reduced-deformed-grpd}
For any $r\in \mathbb R$, the   reduced symplectic groupoid  $T^*\mathcal{G}\cong \mu^{-1}(r)/\mathbb{S}^1$ is an $(r\gamma)$-deformed cotangent symplectic groupoid, endowed with the form \[
\omega_r=\omega_{can}^{\mathcal G}
+
r(pr_{\mathcal{G}})^*\omega_B
\]
\end{propn}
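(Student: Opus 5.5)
The plan is to run the level-$1$ argument with $1$ replaced by a general $r$, or to reduce to it by rescaling. Three things must be checked for $K_r:=\mu^{-1}(r)$:
\begin{itemize}
\item[(a)] the Reduction Lemma~\ref{lem: red-lem} applies at level $r$;
\item[(b)] the identification $\beta_r(\eta):=[(dp|_g)^*\eta+r\theta_g]$ transports the reduced groupoid structure to that of $(T^*\mathcal G)_{r\gamma}$;
\item[(c)] $\beta_r^*\omega_\Gamma=\omega_{\rm can}+r\,{\rm pr}_{\mathcal G}^*\omega_B$.
\end{itemize}

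For (a), every ingredient used at level $1$ is level-independent. Lemma~\ref{lem:mu-via-s-t} gives $K_r=\SC^{-1}(N_r)=\TC^{-1}(N_r)$ with $N_r={\rm pr}_{\mathbb R}^{-1}(r)$, an embedded affine subbundle of $\tilde A^*$. Hence $K_r$ is a restriction Lie subgroupoid. The multiplicativity lemmas (Lemmas~\ref{lem:s-t-invariant}, \ref{lem:m-equivariant} and the inverse lemma) do not involve the level at all. The value $r$ is regular because $\mu$ is fibrewise affine with nonzero linear part, and the action on $K_r$ is free and proper since the action on $\tilde{\mathcal G}$ is principal. So $(\Gamma_r,\omega_{\Gamma_r})\rightrightarrows N_r\cong A^*$ is a symplectic groupoid, where $N_r\cong A^*$ via the splitting $\eta\mapsto(\eta,r)$.

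For (b), I would avoid redoing Appendix B by a trick: $r\theta$ satisfies the same algebraic identities as $\theta$ except for the normalization $\langle\theta,\psi(1)\rangle=1$, which becomes $r$. The derivations of the structure maps of $\Gamma$ in terms of $\theta$ are linear in $\theta$ once $\mu=r$ is used in place of $\mu=1$. In those derivations, every occurrence of $\partial\theta$ becomes $\partial(r\theta)=r\,\tilde p^*\gamma$. Concretely, I would take $\xi_i=(dp)^*\eta_i+r\theta_{g_i}$ and pair $\MC(\xi_1,\xi_2)$ with a lift $\MT_*(\tilde w_1,\tilde w_2)$ of $\MG_*(w_1,w_2)$. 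This gives
\[
\langle\eta_1,w_1\rangle+\langle\eta_2,w_2\rangle+r\bigl(\theta(\tilde w_1)+\theta(\tilde w_2)-\theta(\MT_*(\tilde w_1,\tilde w_2))\bigr)+r\,\theta(\MT_*(\tilde w_1,\tilde w_2)),
\]
whose middle bracket is $r\gamma(w_1,w_2)$. Subtracting $r\theta$ at $g_1g_2$ exhibits $\hat\MG$ as in \eqref{eq:groupoid-def-3} with $r\gamma$. The same computation, with $a\in A$ lifted via the splitting $\phi$ and $\theta^\flat$ absorbing the $r$, gives $\hat\SG,\hat\TG,\hat\IG$ in \eqref{eq:groupoid-def-1}, \eqref{eq:groupoid-def-2} and \eqref{eq:inverse-map} with $r\gamma$. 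The normalization $\UT^*\theta=0$ gives $\hat\UG=\UG_0$. Since $r\gamma$ satisfies the conditions of Theorem~\ref{thm:groupoid-cond} (they are linear in $\gamma$), this is consistent.

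For (c), I would use the standard Satzer--Marsden--Kummer computation. Let $\tau_r:T^*\tilde{\mathcal G}\to T^*\tilde{\mathcal G}$ be the fibre translation $\xi\mapsto\xi+r\theta$. Then $\tau_r^*\vartheta_L=\vartheta_L+r\,{\rm pr}^*\theta$, so $\tau_r^*\omega_{\rm can}=\omega_{\rm can}+r\,{\rm pr}^*d\theta$. Restricted to the annihilator of the vertical directions, which is $(dp)^*T^*\mathcal G$, the form $\omega_{\rm can}$ pulls back to $\omega_{\rm can}^{\mathcal G}$ and $d\theta=p^*\omega_B$. This yields $\omega_r$. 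Multiplicativity then follows independently from Proposition~\ref{prop: multiplicative-cond}, since $\partial(r\omega_B)=r\,d\gamma=d(r\gamma)$.

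The main obstacle is (b) for the source and target maps. There the splitting $\phi$ and the identification $N_r\cong A^*$ both depend on $r$, and one must check that the correction terms $\gamma(0_k,\IG_*a)$ and $\gamma(b,0_k)$ come out scaled by exactly $r$. My first attempt would be to deduce everything from the level-$1$ case. When $r\neq0$, the point is that $\mu^{-1}(r)$ for $(\tilde{\mathcal G},\theta)$ and $\mu^{-1}(1)$ for $\theta'=r\theta$ agree as computations, even though $\theta'$ is not a connection. When $r=0$, the claim is just the classical cotangent groupoid, with $\beta_0$ given by $(dp)^*$.
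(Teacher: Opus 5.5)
Your proposal is correct and follows essentially the paper's route. The paper justifies this proposition only by observing that the Reduction Lemma applies at every level, and that replacing $\theta_g$ by $r\theta_g$ in $\beta$ and rerunning the level-$1$ computations of Appendix B turns $\partial\theta$ into $r\,\tilde p^*\gamma$ and $d\theta$ into $r\,p^*\omega_B$. Your fibre-translation argument for $\omega_r$ makes explicit the Satzer--Marsden--Kummer step that the paper only cites. Note also that the splitting $\phi$ need not vary with $r$ (only the identification $N_r\cong A^*$, $\eta\mapsto(\eta,r)$, does), so the source/target check you flag as the main obstacle goes through verbatim with the extra factor $r$.
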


\subsection{Geometrically nontrivial examples}\label{sec:Kac-Moody-section}

% \subsection{The reduced symplectic groupoid as a nontrivial $\gamma$-deformation}
\subsubsection{Examples arising from Lie group central extensions}
In this section, we study examples arising from Lie group extensions. Assume that there is an $\mathbb{S}^1$-central extension of Lie groups 
\begin{equation}
1\rightarrow \mathbb{S}^1\rightarrow \tilde{G} \rightarrow G \rightarrow 1.
\end{equation}
By taking derivatives, we obtain
\begin{equation}
0\rightarrow \mathbb{R}\rightarrow \tilde{\mathfrak{g}} \rightarrow \mathfrak{g} \rightarrow 0.
\end{equation}
We may fix a splitting of vector spaces $S: \mathfrak{g}\rightarrow \tilde{\mathfrak{g}}$, so that $\tilde{\mathfrak{g}} = S(\mathfrak{g})\oplus \mathbb{R}\epsilon$, where $\epsilon$ denotes the central generator. For simplicity, we may denote elements of $\tilde{\mathfrak{g}}$ by $(v,r)$, for $v \in \mathfrak{g}$ and $r\in \mathbb{R}$.

The associated 1-cocycle $\chi: G\rightarrow \mathfrak{g}^*$ is given by, for any $v\in \mathfrak{g}$, 
\[
\langle \chi(g),v\rangle :=pr_{\mathbb{R}}(Ad_{{\tilde{g}}^{-1}}(v,0)), \quad \forall \tilde{g}\in \tilde{G} \text{ such that } p(\tilde{g})=g.
\]
It follows from centrality that $\chi$ is well-defined.

By Proposition \ref{thm:reduced-deformed-grpd}, under the choice of a connection 1-form $\theta \in \Omega^1(\tilde{G})$, we obtain a deformed symplectic groupoid on $T^*G$.  To match with the descriptions as in Proposition \ref{prop: deformed-aff-grpd}, we define
\begin{equation}
    \langle \theta, (L_{\tilde{g}})_*(v,r)\rangle:=r,\quad \forall (v,r)\in \tilde{\mathfrak{g}} \text{  and  } \tilde{g}\in \tilde{G}.
\end{equation}
Using $\partial \theta = (pr_1)^*\theta + (pr_2)^*\theta-\MT^*\theta$, together with
\[
\MT_*((L_{\tilde{g}_1})_*(v_1,r_1),(L_{\tilde{g}_2})_*(v_2,r_2)) = (L_{\tilde{g}_1\tilde{g}_2})_*(Ad_{{\tilde{g}_2}^{-1}}(v_1,r_1)+(v_2,r_2)) 
\]
we obtain
\begin{equation*}
\begin{aligned}
(\partial \theta)((L_{\tilde{g}_1})_*(v_1,r_1),(L_{\tilde{g}_2})_*(v_2,r_2)) = -pr_{\mathbb{R}}(Ad_{{\tilde{g}_2}^{-1}}(v_1,0)). 
\end{aligned}
\end{equation*}
Here we have used that $pr_{\mathbb{R}}(Ad_{{\tilde{g}_2}^{-1}}(v_1,r_1)) = r_1$.

Now it follows from Equation \eqref{eq: gamm-chi} that 
\[
\partial \theta = \tilde{p}^*\gamma^{\chi}.
\]
Thus we have found a connection $\theta$, which yields the reduced groupoid as the $\gamma^{\chi}$-deformed cotangent groupoid. Further, one checks that
\[
d\theta = p^*\omega_B.
\]
We have the following result, which is a reformulation of \cite{B-X-Z}.
\begin{thm}
Assume there is an $\mathbb{S}^1$-central extension of Lie groups $\tilde{G}\rightarrow G$. Consider the corresponding $(r\gamma^{\chi})$-deformed cotangent groupoid $(T^*G)_{r\gamma^{\chi}}$, with corresponding symplectic form $\omega_r=\omega_{can} + r(pr_G)^*\omega_B$.

Then for any $r\in \mathbb{R}$, $((T^*G)_{r\gamma^{\chi}},\omega_r)$ is a symplectic groupoid. At $r=0$, we get the coadjoint action groupoid; and at $r=1$, we get the affine action groupoid.
\end{thm}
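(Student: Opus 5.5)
The plan is to assemble the statement from results already established, with one honest check at each end of the family. Fix the left-invariant-in-$v$ connection $\theta$ defined by $\langle\theta,(L_{\tilde g})_*(v,r)\rangle=r$. We have already computed $\partial\theta=\tilde p^*\gamma^{\chi}$, and $d\theta=p^*\omega_B$ descends to $\omega_B$.

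Before invoking Proposition \ref{thm:reduced-deformed-grpd}, I will first check that $\theta$ is a principal connection normalized by units. For $\mathbb S^1$-invariance, right translation by a central element preserves $(L_{\tilde g})_*(v,r)$ up to base point. Indeed $\Phi_z(\tilde g)=\tilde g z$ with $z$ central, and $(R_z)_*(L_{\tilde g})_*(v,r)=(L_{\tilde g z})_*\operatorname{Ad}_{z^{-1}}(v,r)=(L_{\tilde gz})_*(v,r)$. For the fundamental field, $\psi(1)=(L_{\tilde g})_*\varepsilon$ with $\varepsilon=(0,1)$, so $\theta(\psi(1))=1$. For unit normalization, the base $M$ is a point, so $\UT^*\theta=0$ is vacuous.

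With these checks in hand, Proposition \ref{thm:reduced-deformed-grpd}, applied with this $\theta$, shows that for each $r\in\mathbb R$ the reduction $\mu^{-1}(r)/\mathbb S^1\cong T^*G$ is the $(r\gamma^{\chi})$-deformed cotangent groupoid with symplectic form $\omega_r=\omega_{\rm can}+r(pr_G)^*\omega_B$, and this form is multiplicative. As a purely intrinsic cross-check, $r\gamma^\chi$ satisfies the hypotheses of Theorem \ref{thm:groupoid-cond}: $\partial(r\gamma^\chi)=r\tilde p^{*-1}\partial\partial\theta=0$ after descending. We also have $\partial\omega_B=d\gamma^\chi$, which Proposition \ref{prop: multiplicative-cond} needs. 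Nondegeneracy of the magnetic form is automatic, and $d\omega_r=0$ since $\omega_B$ descends from the exact form $d\theta$. By the corollary to Proposition \ref{prop: multiplicative-cond}, each $((T^*G)_{r\gamma^\chi},\omega_r)$ is a symplectic groupoid.

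For the endpoints, at $r=0$ the deformation vanishes, so we have the ordinary cotangent groupoid $T^*G$. Under left trivialization $\phi^L$ this is the coadjoint action groupoid; this is the case $\chi=0$ of Proposition \ref{prop: deformed-aff-grpd}. At $r=1$, Proposition \ref{prop: deformed-aff-grpd} applied to $\gamma^\chi$ identifies $(T^*G)_{\gamma^\chi}$ with the affine action groupoid for $\chi$. For that proposition we need $\chi$ to be a group $1$-cocycle, which I will verify directly from its definition: write $\operatorname{Ad}_{(\tilde g\tilde h)^{-1}}=\operatorname{Ad}_{\tilde h^{-1}}\operatorname{Ad}_{\tilde g^{-1}}$, split $\operatorname{Ad}_{\tilde g^{-1}}(v,0)=(\operatorname{Ad}_{g^{-1}}v,\langle\chi(g),v\rangle)$, and use that $\operatorname{Ad}$ fixes the central component.

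The main obstacle is sign and convention bookkeeping, since the paper's stated formula for $\partial\theta$ is only a sketch. Its term $-pr_{\mathbb R}(\operatorname{Ad}_{\tilde g_2^{-1}}(v_1,0))$ must match $\gamma^\chi(v_1,v_2)=-\langle\chi(g_2),(L_{g_1^{-1}})_*v_1\rangle$ exactly, including which factor $\chi$ is evaluated at. I would recheck this by hand from the left-trivialized multiplication formula. If a sign or a left/right mismatch appears, I would absorb it with the gauge freedom in Proposition \ref{prop: gauge-1} or by replacing $\chi$ with its equivalent cocycle, as in the Lie group corollary.
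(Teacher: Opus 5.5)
Your proposal is correct and follows the paper's own route. You fix the left-invariant connection $\theta$, use $\partial\theta=\tilde p^*\gamma^{\chi}$ and $d\theta=p^*\omega_B$, invoke Proposition~\ref{thm:reduced-deformed-grpd} for the whole family, and identify the endpoints via Proposition~\ref{prop: deformed-aff-grpd}; your checks that $\theta$ is a normalized principal connection and that $\chi$ satisfies the cocycle identity fill in details the paper leaves implicit. The sign bookkeeping you flag comes out right with no gauge correction needed: $\MT_*$ of the two left-translated vectors is $(L_{\tilde g_1\tilde g_2})_*\bigl(\operatorname{Ad}_{\tilde g_2^{-1}}(v_1,r_1)+(v_2,r_2)\bigr)$, so $\partial\theta$ evaluates to $-\langle\chi(g_2),v_1\rangle$, which is exactly $\tilde p^*\gamma^{\chi}$ (and in your intrinsic cross-check the degeneracy normalization, which you did not verify, also holds because $\chi(e)=0$).
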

Although these objects sit in a smooth $r$-family of symplectic groupoids, the endpoint $r=0$ need not be Morita equivalent to $r=1$, even as Lie groupoids. We shall see an explicit example next.

For any affine coadjoint 1-cocycle $\chi$, we denote by $G\times_{\chi}\mathfrak{g}^* \rightrightarrows \mathfrak{g}^*$ the affine action groupoid corresponding to $\chi$. In particular, when $\chi = 0$, we recover the coadjoint action groupoid. We have the following condition for non-Morita equivalence.

\begin{lemma}
If no stabilizer of the affine action corresponding to $\chi$ is isomorphic to $G$, then the groupoids $G \times _0\mathfrak{g}^*$ and $G\times_{\chi}\mathfrak{g}^*$ are not Morita equivalent.

\end{lemma}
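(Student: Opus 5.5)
The plan is to use the Morita invariance of isotropy groups. If two Lie groupoids are Morita equivalent, then there is a homeomorphism between their orbit spaces, and the isotropy groups at corresponding points are isomorphic as Lie groups. This is standard: a Morita equivalence bibundle $P$ with free, proper, principal actions identifies the isotropy $\mathcal{G}_x$ with the isotropy $\mathcal{H}_y$ for any $p\in P$ lying over $(x,y)$, via $g\cdot p = p\cdot h$. So it is enough to exhibit a point of $G\times_0\mathfrak g^*$ whose isotropy group is not isomorphic to the isotropy group of any point of $G\times_\chi\mathfrak g^*$.

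In the coadjoint action groupoid $G\times_0\mathfrak g^*\rightrightarrows\mathfrak g^*$, the arrows from $\eta$ to itself are the pairs $(g,\eta)$ with $\operatorname{Ad}^*_{g^{-1}}\eta=\eta$. The isotropy group at $\eta$ is therefore the coadjoint stabilizer $G_\eta$. At $\eta=0$ we have $\operatorname{Ad}^*_{g^{-1}}0=0$ for every $g$, so the isotropy group at the origin is all of $G$.

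For the affine action groupoid $G\times_\chi\mathfrak g^*$, the isotropy group at $\eta$ is the stabilizer $\{g\in G: \operatorname{Ad}^*_{g^{-1}}\eta+\chi(g)=\eta\}$ of $\eta$ under the affine action. This is a closed subgroup of $G$, and by hypothesis none of these stabilizers is isomorphic to $G$. Now suppose, for contradiction, that a Morita equivalence exists. The orbit $\{0\}$ of the first groupoid would then correspond to some orbit $\mathcal O$ of the second, and for any $\eta\in\mathcal O$ the stabilizer of $\eta$ would be isomorphic to $G$. This contradicts the hypothesis, so the two groupoids are not Morita equivalent.

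The main point requiring care is the isotropy invariance under Morita equivalence. I would cite it as standard (for example, Moerdijk–Mrčun), or give a short proof using a bibundle $P$. The proof goes as follows. Since the right action is principal, for $p\in P$ with moment images $x$ and $y$ and any $g\in\mathcal G_x$, there is a unique $h$ with $g p = p h$. The map $g\mapsto h$ is a smooth group isomorphism $\mathcal G_x\to\mathcal H_y$, and its inverse is obtained symmetrically from the principality of the left action. Nothing else is needed, since the argument only uses the bare existence of the point $0$ with full isotropy $G$.
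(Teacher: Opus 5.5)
Your proposal is correct and follows the same route as the paper: Morita invariance of isotropy groups, applied to the fixed point $0\in\mathfrak g^*$ of the coadjoint action, whose isotropy is all of $G$. The only difference is that you prove the isotropy invariance directly via a bibundle, whereas the paper cites the Behrend--Xu differentiable-stack framework for that fact.
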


\begin{proof}
By \cite{stacks_B-X}, Morita-equivalent Lie groupoids present equivalent differentiable stacks, and corresponding objects have isomorphic isotropy groups. In our case, for the coadjoint action, $0 \in \mathfrak{g}^*$ is a fixed point for the action. Therefore, the isotropy at $0$ is the Lie group $G$. 

In order for $G \times _0\mathfrak{g}^*$ and $G\times_{\chi}\mathfrak{g}^*$ to be Morita equivalent, one must have that $G$ is also the isotropy group over some point in the groupoid $G\times_{\chi}\mathfrak{g}^*$. Equivalently, if no stabilizer of the affine action is isomorphic to $G$, then the groupoids $G \times _0\mathfrak{g}^*$ and $G\times_{\chi}\mathfrak{g}^*$ are not Morita equivalent.
\end{proof}

\subsubsection{The Kac-Moody example}
Consider the Kac-Moody extension
\begin{equation}
1\rightarrow \mathbb{S}^1\rightarrow \widetilde{LG} \rightarrow LG \rightarrow 1.
\end{equation}
The associated affine action groupoid is the gauge action groupoid. By \cite{A-M-M}, the restricted gauge action of $\Omega G$ on $L\mathfrak{g}^*$ makes it into a principal bundle over $G$. In particular, the action is free. Since $\Omega G$ acts freely, the stabilizer of a gauge connection injects into the finite-dimensional group $G$ by evaluation at the base point, and therefore it cannot be isomorphic to $LG$. We have obtained a deformation between different stacks.

\begin{thm}
Assume $G$ is a compact, connected, nontrivial Lie group, with the Kac-Moody extension $\widetilde{LG}\rightarrow LG$. For any $r\in \mathbb{R}$, consider the corresponding $(r\gamma^{\chi})$-deformed cotangent groupoid $(LG \times L\mathfrak{g}^*)_{r\gamma^{\chi}}$.

Then the groupoid $(LG \times L\mathfrak{g}^*)_{0}$ and $(LG \times L\mathfrak{g}^*)_{\gamma^{\chi}}$ are non-Morita-equivalent. In particular, this is a deformation between different stacks. 
\end{thm}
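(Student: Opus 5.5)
The plan is to reduce the claim to the non-Morita-equivalence lemma just proved, applied to the loop group $LG$ in place of $G$, with $\chi$ the Kac--Moody cocycle. The first step is to identify both endpoints explicitly. By Proposition~\ref{prop: deformed-aff-grpd}, together with the computation $\partial\theta=\tilde p^*\gamma^{\chi}$ for the left-invariant connection above, the groupoid $(LG\times L\mathfrak g^*)_{0}$ is the coadjoint action groupoid $LG\times_0 L\mathfrak g^*$. The groupoid $(LG\times L\mathfrak g^*)_{\gamma^\chi}$ is the affine action groupoid $LG\times_\chi L\mathfrak g^*$.

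Next I would identify the affine action with the gauge action. For the Kac--Moody extension, the cocycle is $\chi(g)=g^{-1}dg$, up to sign and level normalization. So under $L\mathfrak g^*\supset L\mathfrak g\,d\theta$ (connections on the trivial bundle over $S^1$), the affine action is $A\mapsto \operatorname{Ad}_{g^{-1}}A+g^{-1}dg$. This is the gauge action, as already asserted in the preceding paragraph.

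The main step is to show that no stabilizer of the gauge action is isomorphic to $LG$. Here I would follow the argument sketched before the statement. Fix a connection $A$ and let $\operatorname{Stab}(A)\subset LG$ be its stabilizer. The based loop group $\Omega G$ acts freely on $L\mathfrak g^*$, by \cite{A-M-M}. Hence the evaluation homomorphism $\operatorname{ev}_0\colon \operatorname{Stab}(A)\to G$ has kernel $\operatorname{Stab}(A)\cap\Omega G=\{e\}$, so it is injective. Concretely, a gauge transformation fixing $A$ solves the ODE $dg=[g,A]$-type parallel transport equation, so it is determined by $g(0)$. Thus $\operatorname{Stab}(A)$ is isomorphic to a subgroup of the finite-dimensional compact group $G$; in fact it is the centralizer of the holonomy of $A$.

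It then remains to argue that $LG$ is not isomorphic to any subgroup of $G$. This is the point where I expect the only genuine care to be needed, because the isomorphism notion must be fixed. I would use isomorphism of Lie groups, which is what matching isotropy groups under Morita equivalence provides. Then the obstruction is dimension: $LG$ is infinite-dimensional because $G$ is nontrivial, while $\operatorname{Stab}(A)$ is a closed subgroup of $G$ and hence finite-dimensional. If one wanted to avoid Fréchet Lie group subtleties, an abstract-group argument would also work. Since $G$ is nontrivial and connected, it contains a circle $T$. Then $LG$ contains $LT$, which is an abelian group of uncountable torsion-free rank, and a compact Lie group cannot contain such a subgroup that is also abstractly isomorphic to all of $LG$. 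However, I would present the dimension argument.

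Finally, the isotropy of $LG\times_0 L\mathfrak g^*$ at $0$ is all of $LG$. By the preceding lemma, the two groupoids are therefore not Morita equivalent, so they present different differentiable stacks.
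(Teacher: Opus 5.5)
Your proposal is correct and is essentially the paper's argument. You identify the endpoints as the coadjoint and gauge action groupoids, use freeness of the based loop group $\Omega G$ to embed every stabilizer into $G$ by evaluation at the base point, and conclude with the isotropy lemma because the coadjoint isotropy at $0$ is all of $LG$; you add only that the stabilizer is the centralizer of the holonomy and that the obstruction is dimension.

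Keep the dimension argument and drop the abstract-group aside, which is false as stated. Abstractly $\mathbb S^1\cong\mathbb Q/\mathbb Z\oplus\mathbb Q^{(\mathfrak c)}$ while $L\mathbb S^1\cong\mathbb Z\oplus\mathbb Q/\mathbb Z\oplus\mathbb Q^{(\mathfrak c)}$, so for $G=\mathbb S^1$ the compact group $G$ does contain a subgroup abstractly isomorphic to $LG$, and uncountable torsion-free rank is no obstruction.
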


Further, following \cite{A-M-M,B-X-Z,NiQi2026Gerbes}, on the level of cohomology classes of symplectic groupoids, we have a deformation from the trivial class to a generator. 

\begin{thm}
Assume $G$ is a compact, simple, and simply-connected Lie group. Consider the Kac-Moody extension $\widetilde{LG}\rightarrow LG$. For any $r\in \mathbb{R}$, consider the corresponding $r\gamma^{\chi}$-deformed cotangent symplectic groupoid $((LG \times L\mathfrak{g}^*)_{r\gamma^{\chi}}, \omega_r)$.

The family interpolates between a symplectic groupoid whose multiplicative symplectic class is trivial when $r=0$, and the gauge-action symplectic groupoid when $r=1$, whose class corresponds to the basic generator in $H^3_G(G;\mathbb{Z})$.
\end{thm}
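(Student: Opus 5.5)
The plan is to use the three previous results in turn. First, the family $((LG\times L\mathfrak g^*)_{r\gamma^\chi},\omega_r)$ is a family of symplectic groupoids, by the Lie group central extension theorem stated above applied to $\widetilde{LG}\to LG$. Second, its endpoints are the coadjoint action groupoid at $r=0$ and the gauge action groupoid at $r=1$; this follows from Proposition \ref{prop: deformed-aff-grpd} together with the identification of the Kac--Moody cocycle $\chi(g)=g^{-1}dg$ (up to the level normalization), paired via the basic inner product. Third, once the endpoints are identified, the statement reduces to computing the multiplicative (equivariant) cohomology class of $\omega_r$ at the two ends.

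At $r=0$ the form is $\omega_{\rm can}$ on the ordinary cotangent groupoid. This form is exact, $\omega_{\rm can}=d\vartheta_L$, and it is multiplicative with $\partial\vartheta_L=0$, because $\gamma=0$ in the computation preceding Proposition \ref{prop: multiplicative-cond}. The pair $(\omega_{\rm can},0)$ is therefore the total coboundary of $(\vartheta_L)$ in the Bott--Shulman--de Rham double complex, so its class vanishes.

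At $r=1$ the groupoid is $LG\ltimes L\mathfrak g^*$ with the gauge action, and $\omega_1=\omega_{\rm can}+{\rm pr}^*\omega_B$. Its class lives in $H^3$ of the stack $[L\mathfrak g^*/LG]$. I would use the Morita equivalence of \cite{A-M-M}: the holonomy map $L\mathfrak g^*\to G$ is a principal $\Omega G$-bundle, which gives $[L\mathfrak g^*/LG]\simeq[G/G]$ and hence $H^3\cong H^3_G(G)$. The remaining step is to transport $\omega_1$, or equivalently the prequantization data (the central extension $\mu^{-1}(1)\to\mu^{-1}(1)/\mathbb S^1$, which by Theorem B is the reduced $T^*\widetilde{LG}$ at level $1$), across this equivalence. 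After transport it should be the basic gerbe over $[G/G]$, whose Dixmier--Douady class is the generator of $H^3_G(G;\mathbb Z)\cong\mathbb Z$ for $G$ compact, simple and simply connected.

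This last identification is the main obstacle. I would handle it by restricting to $\Omega G$-quotients: the curvature $\omega_B$ of the Kac--Moody connection is the level-one cocycle $\langle\xi,\partial_t\eta\rangle$, and its class together with $\gamma^\chi$ descends to the equivariant Cartan--Dirac 3-form $\tfrac{1}{12}\langle\theta^L,[\theta^L,\theta^L]\rangle$ plus its equivariant correction on $G$. That 3-form represents the basic class. If this comparison becomes too computational, I would instead cite \cite{B-X-Z,NiQi2026Gerbes}, where the gauge-action quasi-symplectic groupoid is shown to be Morita equivalent to the AMM groupoid $G\ltimes G$ with its Cartan--Dirac class. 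In either approach, the explicit identification of the level-one cocycle with the Kac--Moody extension is the point that needs care.
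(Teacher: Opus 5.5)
Your proposal follows the paper's route. The paper gives no argument beyond identifying the endpoints (the coadjoint action groupoid at $r=0$ and the gauge action groupoid at $r=1$) and invoking \cite{A-M-M,B-X-Z,NiQi2026Gerbes} for $[L\mathfrak g^*/LG]\simeq[G/G]$ and the basic-gerbe identification; your observation that $\omega_{\rm can}=d\vartheta_L$ with $\partial\vartheta_L=0$ makes the $r=0$ triviality explicit. One cosmetic slip: with the paper's convention $\chi(gh)=\operatorname{Ad}^*_{g^{-1}}\chi(h)+\chi(g)$, the Kac--Moody cocycle is a multiple of $dg\,g^{-1}$, not $g^{-1}dg$ (the two differ by $\operatorname{Ad}_g$, not by a level normalization), and this does not affect the argument.
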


\section{Affine Poisson geometry and prequantization}

It is well-known that each symplectic groupoid determines a unique Poisson structure on the base manifold, so that the groupoid target map is Poisson. In this section, we discuss applications and interpretations of the deformed cotangent symplectic groupoid in Poisson geometry.

For simplicity, we shall work with examples arising from central extensions of Lie groupoids
\begin{equation}
M\times \mathbb S^1 \stackrel{i}{\longrightarrow} \tilde{\mathcal G} \stackrel{p}{\longrightarrow} \mathcal G,
\end{equation}
as discussed in Section \ref{sec:central-affine}. We will see that in this case, the Poisson structure on the base $A^*$ determined by the reduced symplectic groupoid is an affine Poisson structure.

Below we briefly recall affine Poisson structures on $A^*$. By differentiating the groupoid central extension above, we obtain an $\mathbb R$-central extension of
Lie algebroids
\begin{equation}
0\longrightarrow M\times\mathbb R \stackrel{i_*}\longrightarrow \tilde A \stackrel{p_*}{\longrightarrow} A \longrightarrow 0.
\end{equation}
We may choose a splitting, as in Equation \eqref{eq:splitting}, so that $\tilde{A} = A \times \mathbb{R}$. By centrality, for $X,Y\in \Gamma(A)$, the Lie bracket on $\tilde{A}$ is determined by
\begin{equation}
    [(X,0),(Y,0)]_{\tilde{A} } = ([X,Y]_A, \lambda(X,Y)),
\end{equation}
for some Lie algebroid 2-cocycle $\lambda \in \Gamma(\wedge^2A^*)$. The cocycle \(\lambda\) depends on the choice of splitting, but changing the
splitting changes \(\lambda\) by a Lie algebroid coboundary. 

Denote by $q:A^*\to M$ the bundle projection.
For $X\in\Gamma(A)$, let $l_X\in C^\infty(A^*)$ be the corresponding fiberwise linear
function. The cocycle $\lambda$ defines an \emph{affine Poisson structure}
$\pi_\lambda$ on $A^*$, uniquely determined by
\begin{equation}\label{eq:affine-poisson}
\begin{aligned}
\{l_X,l_Y\}_{\pi_\lambda}
&=l_{[X,Y]_A}+q^*\lambda(X,Y),\\
\{l_X,q^*f\}_{\pi_\lambda}
&=q^*(\rho(X)f),\\
\{q^*f,q^*g\}_{\pi_\lambda}
&=0,
\end{aligned}
\end{equation}
for all $X,Y\in\Gamma(A)$ and $f,g\in C^\infty(M)$.
When $\lambda=0$, this reduces to the usual Lie--Poisson structure on $A^*$.

In order to show that the reduced symplectic groupoid determines the affine Poisson structure above, we first embed the affine Poisson structure into the Lie-Poisson manifold, as a Poisson submanifold. Let $\pi_{\rm Lie}$ denote the Lie--Poisson structure on $\tilde A^*$.
Consider the affine embedding
\[
j:A^*\hookrightarrow \tilde A^*,
\qquad
a\longmapsto (a,1).
\]
One checks directly that the map
$
j:(A^*,\pi_\lambda)\longrightarrow (\tilde A^*,\pi_{Lie})
$
is a Poisson embedding.

Next, we employ the following observation regarding reduced symplectic groupoids: under reasonable assumptions, the Poisson structure determined by the reduced symplectic groupoid coincides with the Poisson structure inheritted by the reduced base as a Poisson submanifold. For this, we recall some terminology:  let $(\Gamma, \omega_{\Gamma})\rightrightarrows N_c$ be the reduced symplectic groupoid from $(\mathcal{H},\omega)$, then we obtain induced Poisson structures $\pi_c$ on $N_c$, and $\pi_N$ on $N$, respectively.

Recall from Poisson geometry that, for any Poisson manifold $(M,\Pi)$, let $\Pi^{\sharp}: T^*M \rightarrow TM$ be the induced bundle map. Then a submanifold $S \subset M$ is called a Poisson submanifold if
\begin{equation}\label{eq:Poisson-sub}
\Pi^{\sharp}(T^*M|_S) \subset TS.   
\end{equation}
In this case, $S$ inherits a unique Poisson structure such that the inclusion is a Poisson map. We postpone the proof of the following lemma to the end of this section.

\begin{lemma}[Poisson-naturality]\label{propn: Poisson-nature}
Let \((H,\omega)\rightrightarrows (N,\pi_N)\) be a symplectic groupoid,
and suppose the reduction lemma produces a reduced symplectic groupoid
\[
(\Gamma,\omega_\Gamma)\rightrightarrows N_c.
\]
Let \(\pi_c^{\mathrm{red}}\) be the Poisson structure on \(N_c\) induced by
\((\Gamma,\omega_\Gamma)\). If \(N_c\subset N\) is a Poisson submanifold,
then \(\pi_c^{\mathrm{red}}\) coincides with the Poisson structure inherited
from \((N,\pi_N)\).

\end{lemma}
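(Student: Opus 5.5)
We use the characterization of the base Poisson structure of a symplectic groupoid: for a symplectic groupoid \((\mathcal H,\omega)\rightrightarrows N\), the structure \(\pi_N\) is the unique Poisson structure for which \(t\) is a Poisson map (equivalently, \(s\) is anti-Poisson). Since \(\pi_c^{\mathrm{red}}\) is likewise characterized by \(\hat t:(\Gamma,\omega_\Gamma)\to(N_c,\pi_c^{\mathrm{red}})\) being Poisson, it suffices to show that \(\hat t\) is Poisson when \(N_c\) carries the inherited structure \(\pi_c^{\rm sub}\). Uniqueness applies because \(\hat t\) is a surjective submersion.

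The computation is done on \(\mathcal K=\mu^{-1}(c)\). Let \(\tilde p:\mathcal K\to\Gamma\) be the quotient and \(\tilde j:\mathcal K\hookrightarrow\mathcal H\) the inclusion, with \(\hat t\circ\tilde p=t|_{\mathcal K}\). Fix \(f,g\in C^\infty(N_c)\) and extend them to \(F,G\in C^\infty(N)\). Because \(N_c\) is a Poisson submanifold, \(\{F,G\}_{\pi_N}|_{N_c}=\{f,g\}_{\pi_c^{\rm sub}}\), independently of the extensions. The functions \(t^*F\) and \(t^*G\) on \(\mathcal H\) are \(\mathbb S^1\)-invariant, since \(t\circ\phi_r=t\). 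Their restrictions to \(\mathcal K\) are \(\tilde p^*\hat t^*f\) and \(\tilde p^*\hat t^*g\).

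We then invoke the standard Marsden--Weinstein fact: for invariant functions \(F_1,F_2\) on \(\mathcal H\), the bracket \(\{F_1,F_2\}_\omega\) restricted to \(\mathcal K\) equals \(\tilde p^*\) of the reduced bracket of the descended functions. To justify this, note that the Hamiltonian vector field \(X_{F_1}\) of an invariant function is tangent to \(\mu^{-1}(c)\), because \(\{F_1,\mu\}=0\), and it is \(\tilde p\)-related to the reduced Hamiltonian vector field, using \(\tilde p^*\omega_\Gamma=\tilde j^*\omega\). Applying this together with the fact that \(t\) is Poisson gives
\[
\tilde p^*\{\hat t^*f,\hat t^*g\}_{\omega_\Gamma}=\tilde j^*\{t^*F,t^*G\}_\omega=\tilde j^*t^*\{F,G\}_{\pi_N}=\tilde p^*\hat t^*\{f,g\}_{\pi_c^{\rm sub}} .
\]
Since \(\tilde p\) is surjective, \(\hat t\) is Poisson for \(\pi_c^{\rm sub}\). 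By the uniqueness from the first paragraph, \(\pi_c^{\mathrm{red}}=\pi_c^{\rm sub}\).

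The main obstacle is the bookkeeping in the middle step. One point is that \(t^*F\) is invariant on all of \(\mathcal H\), not just on \(\mathcal K\); this follows from condition (3) of the Reduction Lemma, where \(s\circ\phi_r=s\) and \(t\circ\phi_r=t\) hold on \(\mathcal H\). The other is checking that the Hamiltonian vector fields of invariant functions descend correctly. If needed, an alternative is to argue directly with \(\pi^\sharp\), using that \(t_*\) maps \(\omega^{-1}\)-images of \(t^*T^*N\) onto \(\pi_N^\sharp\)-images, which land in \(TN_c\) by \eqref{eq:Poisson-sub}.
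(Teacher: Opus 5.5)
Your proof is correct and is essentially the paper's argument, phrased with Poisson brackets of functions rather than with covectors. Your observation that $X_{t^*F}$ is tangent to $\mathcal K=\mu^{-1}(c)$ because $t$ is $\mathbb S^1$-invariant is exactly the paper's Claim that $(\omega^\flat)^{-1}\bigl((dt_k)^*u\bigr)\in T_k\mathcal K$. Likewise, your $\tilde p$-relatedness of Hamiltonian vector fields via $\tilde p^*\omega_\Gamma=\tilde j^*\omega$ is the paper's identification $\tilde p_*Z_l=(\omega_\Gamma^\flat)^{-1}\bigl((d\hat t_x)^*(dj_y)^*u_l\bigr)$.

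The only difference is bookkeeping: the paper reads off $\pi_c^{\mathrm{red}}(j^*u_1,j^*u_2)=\pi_N(u_1,u_2)$ directly from $t$ and $\hat t$ being Poisson, whereas you show that $\hat t$ is Poisson for the inherited structure and then invoke uniqueness. Local extensions of $f,g$ suffice for your argument, since Poisson brackets are local.
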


We now apply the above result to the extension-reduction example. Since $j(A^*)\subset \tilde{A}^*$ is a Poisson submanifold, we have by Lemma \ref{propn: Poisson-nature} that the affine Poisson structure $\pi_{\lambda}$ coincides with the induced Poisson structure from $\Gamma$. That is, $\hat{\TG}: (\Gamma,\pi_{\Gamma})\rightarrow (A^*,\pi_{\lambda})$ is a Poisson map.

In summary, we obtain the following result.
\begin{thm}\label{thm:grpd-morphisms}
Assume there is an $\mathbb{S}^1$-central extension of Lie groupoids $\tilde{\mathcal{G}}$ over $\mathcal{G}$. The reduced symplectic groupoid $(\Gamma:=\mu^{-1}(1)/\mathbb{S}^1,\omega_{\Gamma})$, which is guaranteed by Theorem \ref{thm: red-grpd} and Proposition \ref{prop: red-multiplicative}, is a symplectic groupoid integrating the affine Poisson structure $(A^*,\pi_{\lambda})$, where $\lambda$ is the cocycle associated to \eqref{eq:central-algebroid}.
\end{thm}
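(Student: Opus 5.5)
The proof assembles results already established, with one genuinely new input: the Poisson-naturality lemma. First I record that $(\Gamma,\omega_\Gamma)\rightrightarrows A^*$ is a symplectic groupoid. The Reduction Lemma applies to $(T^*\tilde{\mathcal G},\omega_{\rm can})$ at level $1$: regularity and freeness were checked, $K=\mu^{-1}(1)$ is the restriction subgroupoid over $N_1=j(A^*)$, and the lifted action is multiplicative by Lemmas \ref{lem:s-t-invariant} and \ref{lem:m-equivariant}. Via $\beta$, Theorem \ref{thm: red-grpd} and Proposition \ref{prop: red-multiplicative} identify it with $((T^*\mathcal G)_\gamma,\omega_M)$. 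So the only thing left is to identify the Poisson structure induced on the base $A^*$. This is the unique structure making the target map $\hat{\TG}$ Poisson, and we must show it equals $\pi_\lambda$.

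For this I apply Lemma \ref{propn: Poisson-nature} with $H=T^*\tilde{\mathcal G}$ and $N=\tilde A^*$. By Coste--Dazord--Weinstein, the base Poisson structure of $(T^*\tilde{\mathcal G},\omega_{\rm can})$ is the Lie--Poisson structure $\pi_{\rm Lie}$ on $\tilde A^*$, and $N_c=N_1=j(A^*)$. Two facts then need to be established.

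\emph{(a) $j(A^*)$ is a Poisson submanifold of $(\tilde A^*,\pi_{\rm Lie})$.} I will check the tangency condition \eqref{eq:Poisson-sub}. The submanifold is the level set ${\rm pr}_{\mathbb R}=l_\varepsilon=1$. Since $\varepsilon$ is central and $\tilde\rho(\varepsilon)=0$, the hamiltonian vector field of $l_\varepsilon$ vanishes: indeed $\{l_\varepsilon,l_{\tilde X}\}=l_{[\varepsilon,\tilde X]}=0$ and $\{l_\varepsilon,q^*f\}=q^*(\tilde\rho(\varepsilon)f)=0$. Hence $l_\varepsilon$ is a Casimir, and its level sets are Poisson submanifolds.

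\emph{(b) The inherited structure is $\pi_\lambda$, i.e.\ $j$ is Poisson.} I compute on generators using the splitting $\phi$. Write $\tilde X=(X,0)$, so that $j^*l_{\tilde X}=l_X$ and $j^*l_\varepsilon=1$. Then
\[
j^*\{l_{(X,0)},l_{(Y,0)}\}=j^*l_{([X,Y]_A,\lambda(X,Y))}=l_{[X,Y]_A}+q^*\lambda(X,Y).
\]
The anchor terms match because $\tilde\rho(X,0)=\rho(X)$. Since the functions $l_{(X,0)}$ and $q^*f$ restrict to generators on $A^*$, this matches \eqref{eq:affine-poisson}.

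With (a) and (b), Lemma \ref{propn: Poisson-nature} gives $\pi^{\rm red}_1=\pi_\lambda$, so $\hat{\TG}$ is Poisson onto $(A^*,\pi_\lambda)$ and $\Gamma$ integrates $\pi_\lambda$. One point needs care: the cocycle $\lambda$ must be defined using the same splitting $\phi$ induced by $\theta$ that is used to identify $N_1$ with $A^*$. A different splitting changes $\lambda$ by a coboundary, and the corresponding affine translation of $A^*$ intertwines the two structures, so the statement holds for the splitting-compatible representative.

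The main obstacle is Lemma \ref{propn: Poisson-nature} itself, whose proof is postponed in the paper. I would argue as follows. The inclusion $\tilde j:K\to H$ and the quotient $\tilde p:K\to\Gamma$ are groupoid morphisms. The target map $t$ of $H$ is Poisson, and $\tilde p^*\omega_\Gamma=\tilde j^*\omega$. Take $f\in C^\infty(N_c)$, extended to some $F$ on $N$. Because $N_c$ is a Poisson submanifold, the hamiltonian vector field $X_{t^*F}$ is tangent to $K=t^{-1}(N_c)$. It is also $\mathbb S^1$-invariant, since $t$ is invariant under the action. Therefore it descends to the hamiltonian vector field of $\hat t^*f$ on $\Gamma$. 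Comparing brackets $\{\hat t^*f,\hat t^*g\}$ upstairs and downstairs then shows $\hat t$ is Poisson for the inherited structure, and uniqueness of the base Poisson structure finishes the argument.
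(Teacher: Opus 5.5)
Your proposal is correct and follows the paper's route. You apply the Poisson-naturality Lemma with $H=T^*\tilde{\mathcal G}$, $N=\tilde A^*$, $N_c=j(A^*)$, after checking that $j(A^*)$ is a Poisson submanifold on which $\pi_{\rm Lie}$ restricts to $\pi_\lambda$. The paper only asserts this check, whereas you carry it out via the Casimir $l_\varepsilon$ and the bracket on generators. Your proof of the lemma is the paper's argument phrased with Hamiltonian vector fields of functions, with one minor difference: you get tangency to $K$ from the Poisson-submanifold property together with $K=\TC^{-1}(N_1)$, while the paper gets it from $d\mu(X)=\omega(X,\zeta)=0$ using $\mathbb S^1$-invariance of $t$, which also covers a general level-set subgroupoid.
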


The reduced symplectic groupoid always carries the additional data of prequantization.

\begin{cor}
With the notation of Theorem~\ref{thm:grpd-morphisms}, the principal
\(\mathbb S^1\)-bundle
\[
\tilde{p}:\mu^{-1}(1)\to \Gamma
\]
admits a natural connection \(1\)-form
$
\alpha=\tilde{j}^*\vartheta_L,
$
whose curvature is $d\alpha=\tilde{p}^*\omega_{\Gamma}$.

Thus \(\mu^{-1}(1)\to\Gamma\) is a  prequantization of the reduced
symplectic groupoid \((\Gamma,\omega_{\Gamma})\).
\end{cor}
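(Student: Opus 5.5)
We must show three things: $\alpha=\tilde j^*\vartheta_L$ descends to a principal connection on $\bar p:\mu^{-1}(1)\to\Gamma$, its curvature is $\bar p^*\omega_\Gamma$, and $\alpha$ is compatible with the groupoid structure. Here $\tilde j$ and $\tilde p$ denote the inclusion $\bar j\colon K\hookrightarrow T^*\tilde{\mathcal G}$ and the quotient map $\bar p\colon K\to\Gamma$, with $K=\mu^{-1}(1)$.

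\textbf{Step 1: $\alpha$ is a connection form.} The $\mathbb S^1$-action on $K$ is the restriction of the cotangent lift $\bar\Phi$. Cotangent lifts preserve the Liouville form, so $\bar\Phi_r^*\vartheta_L=\vartheta_L$. Since $K$ is $\bar\Phi$-invariant (Lemma~\ref{lem:s-t-invariant} together with Lemma~\ref{lem:mu-via-s-t}, or directly from invariance of $\mu$), $\alpha$ is $\mathbb S^1$-invariant. For the vertical normalization, the fundamental vector field $\bar\psi(1)$ is tangent to $K$, and by the definition of the moment map
\[
\alpha(\bar\psi(1))_\xi=\langle\vartheta_L,\bar\psi(1)\rangle_\xi=\mu(\xi)=1,\qquad \xi\in K.
\]
This is the one place where the choice of level $1$ enters; at level $r$ one would use $r^{-1}\alpha$. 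Hence $\alpha$ is a principal connection.

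\textbf{Step 2: curvature.} We have $d\alpha=\bar j^*d\vartheta_L=\bar j^*\omega_{\rm can}$, and Marsden--Weinstein reduction gives $\bar j^*\omega_{\rm can}=\bar p^*\omega_\Gamma$. Thus $d\alpha=\bar p^*\omega_\Gamma$.

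\textbf{Step 3: multiplicativity.} This is the main point that makes the bundle a prequantization of the \emph{groupoid} rather than merely of the symplectic manifold. By Lemma~\ref{lem: red-lem}, $K$ is a Lie subgroupoid and $\bar p$ is a groupoid morphism, so $\bar p\colon K\to\Gamma$ is an $\mathbb S^1$-central extension; centrality follows from Lemma~\ref{lem:m-equivariant}. We then need $\partial_K\alpha=0$. By naturality of $\partial$ under the groupoid morphism $\bar j$, we have $\partial_K\alpha=(\bar j,\bar j)^*\partial\vartheta_L$. For the undeformed cotangent groupoid $T^*\tilde{\mathcal G}$, the defining formula $\langle\MC(\xi_1,\xi_2),\MT_*(w_1,w_2)\rangle=\langle\xi_1,w_1\rangle+\langle\xi_2,w_2\rangle$ gives $\partial\vartheta_L=0$. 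This is the computation preceding Proposition~\ref{prop: multiplicative-cond} with $\gamma=0$. Hence $\alpha$ is a multiplicative connection, and its curvature $\omega_\Gamma$ is multiplicative, consistent with Lemma~\ref{lem: red-lem}.

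The only delicate point is checking that $\bar\psi(1)$ is tangent to $K$ and that the sign conventions make $\alpha(\bar\psi(1))=+1$. Both are immediate from $\mu(\xi)=\langle\vartheta_L,\bar\psi(1)\rangle_\xi$ and from the invariance of $\mu$ under the $\mathbb S^1$-action.
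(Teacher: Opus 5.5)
Your proposal is correct and follows the argument the paper leaves implicit (the corollary is stated without a separate proof): $\alpha=\tilde j^*\vartheta_L$ is $\mathbb S^1$-invariant because cotangent lifts preserve $\vartheta_L$, it is normalized because $\langle\vartheta_L,\bar\psi(1)\rangle=\mu\equiv 1$ on $\mu^{-1}(1)$, and $d\alpha=\tilde j^*\omega_{\rm can}=\tilde p^*\omega_\Gamma$ by the Marsden--Weinstein identity used in the Reduction Lemma. Your Step 3 goes beyond what the paper writes: you show that $\mu^{-1}(1)\to\Gamma$ is an $\mathbb S^1$-central extension via Lemma~\ref{lem:m-equivariant}, and that $\partial\vartheta_L=0$ on $T^*\tilde{\mathcal G}$, which is the $\gamma=0$ case of the Liouville-form computation in Section 2. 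This supplies the multiplicativity of the connection that a prequantization of the \emph{groupoid} requires, and which the paper leaves unstated.
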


\begin{proof}[Proof of Lemma \ref{propn: Poisson-nature}]
Recall the diagram of Lie groupoid morphisms
\[
\begin{tikzcd}
\Gamma \arrow[d, "\hat{\TG}"'] &
\mathcal{K} \arrow[l, "\tilde p"'] \arrow[r, "\tilde j"] \arrow[d, "\TG_K"'] &
\mathcal{H} \arrow[d, "t"] \\
N_c &
N_c \arrow[l, "\mathrm{id}_{N_c}"] \arrow[r, "j"] &
 N,
\end{tikzcd}
\]
where $\tilde j:\mathcal K\hookrightarrow \mathcal H$ is the inclusion and $\tilde p: \mathcal K\to \Gamma$ is the quotient map. 

Let $\pi_{\Gamma}$ be the Poisson bivector field of $(\Gamma, \omega_{\Gamma})$ and $\pi_{H}$ be the Poisson bivector field of $(\mathcal H, \omega)$. Let $k \in \mathcal K$ and write $x:= \tilde p(k) \in \Gamma$, $y:=t(k) \in N_c \subset N$. For any $u_1, u_2 \in T^*_yN$, we need to show that
\[
\pi^{red}_c((dj_y)^*u_1,(dj_y)^*u_2) = \pi(u_1,u_2).
\]
Since $t:\mathcal{H}\rightarrow N$ and $\hat{\TG}:\Gamma\rightarrow N_c$ are both Poisson maps, it suffices to show 
\begin{equation}\label{eq: Poisson-id}
    \pi_H((dt_k)^*u_1,(dt_k)^*u_2) = \pi_{\Gamma}((d\hat{\TG}_x)^*(dj_y)^*u_1,(d\hat{\TG}_x)^*(dj_y)^*u_2).
\end{equation}
Since $N_c$ is a Poisson submanifold, Condition \eqref{eq:Poisson-sub} implies that there exists $Y_1, Y_2 \in T_yN_c$ such that \[(\pi_N)^{\sharp}u_l = j_*Y_l,\quad l=1,2.\]
Since $t$ is Poisson, we have that $j_*Y_l = (\pi_N)^{\sharp}u_l = t_*(\pi_H)^{\sharp}(dt_k)^*u_l.$

\vspace{1em}

\noindent\textbf{Claim.}
Let \(k\in \mathcal K=\mu^{-1}(1)\), and let \(u\in T^*_{t(k)}N\). If the
\(\mathbb S^1\)-action preserves the target map \(t:\mathcal H\to N\), then
\[
(\omega^\flat)^{-1}\bigl((dt_k)^*u\bigr)\in T_k\mathcal K .
\]

\begin{proof}[\textbf{Proof of claim.}]
Set
\[
X=(\omega^\flat)^{-1}\bigl((dt_k)^*u\bigr).
\]
It suffices to show \(d\mu_k(X)=0\). Let $\zeta$ be the fundamental vector field for the $\mathbb S^1$-action on $\mathcal H$. Since
\(\iota_\zeta\omega=-d\mu\), we have
\[
d\mu_k(X)=-\omega_k(\zeta_k,X)=\omega_k(X,\zeta_k).
\]
By the definition of \(X\),
\[
\omega_k(X,\zeta_k)=(dt_k)^*u(\zeta_k)
=u(dt_k(\zeta_k)).
\]
Since \(t\) is \(S^1\)-invariant, \(dt_k(\zeta_k)=0\). Hence
\(
d\mu_k(X)=0,
\)
and \(X\in T_k\mathcal K\).
\end{proof}

Now in view of the embedding $\tilde j: \mathcal K \rightarrow \mathcal H$, there is a unique element $Z_l \in T_kK$ such that 
\begin{equation}\label{eq: poisson-technical_1}
{\tilde j}_*Z_l = (\omega^{\flat})^{-1}(dt_k)^*u_l.    
\end{equation}

Since $(\omega^{\flat})^{-1}=(\pi_H)^{\sharp}$, we may also write that 
\begin{equation}
{\tilde j}_*Z_l = (\pi_H)^{\sharp}(dt_k)^*u_l, \quad l=1,2.
\end{equation}

 Since $\omega_{\Gamma}$ is obtained from $\omega$ by symplectic reduction, we have that $\tilde{p}^*\omega_{\Gamma} = \tilde{j}^*\omega$. From the commutative diagram above, we have that $t\circ \tilde{j} = j\circ \hat{\TG}\circ \tilde{p}$. For any $Z \in T_kK$, we compute that
\[
\begin{aligned}
\omega_\Gamma(\tilde p_*Z_l,\tilde p_*Z)
&= \,\omega(\tilde j_*Z_l,\tilde j_*Z)
= \,\big\langle \omega^\flat(\tilde j_*Z_l),\,\tilde j_*Z\big\rangle \\
&= \,\big\langle (d t_k)^*u,\,\tilde j_*Z\big\rangle
= \,\big\langle u,\,t_*\tilde j_*Z\big\rangle \\
&= \,\big\langle u,\, j_*\hat{\TG}_*{\tilde p}_*Z)\big\rangle
= \,\big\langle (d\hat{\TG}_x)^*(dj_y)^*u,\,\tilde p_*Z\big\rangle.
\end{aligned}
\]
Since $\tilde p_*:T_kK\to T_x\Gamma$ is surjective, we conclude that
\begin{equation}\label{eq: Poisson_technical_2}
    \tilde{p}_*Z_l = (\omega_{\Gamma}^{\flat})^{-1}((d\hat{\TG}_x)^*(dj_y)^*u_l).
\end{equation}
To establish Equation \eqref{eq: Poisson-id}, notice that by Equation \eqref{eq: Poisson_technical_2} and the definition of $\pi_{\Gamma}$ we have that
\begin{align*}
   &\pi_\Gamma((d\hat{\TG}_x)^*(dj_y)^*u_1,(d\hat{\TG}_x)^*(dj_y)^*u_2)\\
=&
\omega_\Gamma\big((\omega_\Gamma^\flat)^{-1}(d\hat{\TG}_x)^*(dj_y)^*u_1,(\omega_\Gamma^\flat)^{-1}(d\hat{\TG}_x)^*(dj_y)^*u_2\big)\\
=&\omega_\Gamma\big(\tilde p_*Z_1,\tilde p_*Z_2 \big).
\end{align*}
Similarly, by Equation \eqref{eq: poisson-technical_1}, we have
\begin{align*}
\pi_{H}((dt_k)^*u_1,(dt_k)^*u_2)
&=
\omega\big((\omega^\flat)^{-1}(dt_k)^*u_1,(\omega^\flat)^{-1}(dt_k)^*u_2\big)\\
&=\omega(\tilde j_*Z_1,\tilde j_*Z_2).
\end{align*}

Then, using $\tilde p^*\omega_\Gamma=\,\tilde j^*\omega$, we have
\[
\omega_\Gamma(\tilde p_*Z_1,\tilde p_*Z_2)
= \,\omega(\tilde j_*Z_1,\tilde j_*Z_2).
\]
Hence, Equation \eqref{eq: Poisson-id} holds.
\end{proof}

\appendix
\section{Symplectic groupoids and cotangent groupoids}\label{sec:CDW-formula}
We follow \cite{mackenzie2005general} for conventions. A groupoid is a small category such that each morphism has an inverse. More concretely, one has the following

\begin{defn}\label{Def:groupoid}
A $\textbf{groupoid}$ consists of a set $\mathcal{G}$ (the arrows) and a set $M$ (the objects), equipped with the following structure maps, 
\begin{itemize}
    \item the \textbf{source} and the \textbf{target} maps $\SG, \TG: \mathcal{G} \rightarrow M,
$
\item the \textbf{composition} map
$
\MG: \mathcal{G}^{(2)} \rightarrow \mathcal{G},
$
defined on the set $\mathcal{G}^{(2)}$ of composable arrows:
$
\mathcal{G}^{(2)}=\{(g, h) \in \mathcal{G} \times \mathcal{G}: \SG(g)=\TG(h)\} .
$

\item the \textbf{unit} map
$
\UG: M \rightarrow \mathcal{G}
$.

\item the \textbf{inverse} map
$
\IG: \mathcal{G} \rightarrow \mathcal{G}
$.
\end{itemize}

We often use $gh$ to denote $\MG(g,h)$. The structure maps satisfy:
\begin{itemize}
    \item law of composition: if $g,h \in \mathcal{G}$ are composable, then
    \begin{equation}
    \mathbf{t}(gh) = \mathbf{t}(g), \quad \mathbf{s}(gh) = \mathbf{s}(h).
    \end{equation}
    \item law of associativity: if $g,h,k \in \mathcal{G}$ are composable, then 
    \begin{equation}
     g(h k)=(g h) k.   
    \end{equation}
    \item law of units: $\forall g\in \mathcal{G}$, we have 
    \begin{equation}
    \mathbf{s}\circ \UG=\mathbf{t}\circ \UG={\rm id}_M, \quad \UG(\mathbf{t}(g))g = g\UG(\SG(g))=g.
    \end{equation}
    \item law of inverses: $\forall g\in \mathcal{G}$, we have
    \begin{equation}
    \begin{aligned}
     &\SG\circ \iota =\TG,  \quad  \iota(g)g = \UG(\SG(g)),\\
     &\TG\circ \iota = \SG, \quad  g\iota(g) = \UG(\TG(g)).
    \end{aligned}
    \end{equation}
\end{itemize}
We call $\mathcal{G}$ a groupoid over $M.$
\end{defn}

\begin{defn}
A $\textbf{Lie groupoid}$ is a groupoid $( \mathcal{G}, M)$ such that $\mathcal{G}$, $M$ are smooth manifolds, the structure maps $\SG, \TG$ are smooth submersions, and $\MG, \UG, \iota$ are smooth maps.
\end{defn}

Define for all $p\geq 0$,
$$\mathcal{G}^{(p)}:=\{ (g_1,\cdots,g_p)\in\underbrace{\mathcal{G}\times\cdots \times \mathcal{G}}_{p ~\text{times}}| \mathbf{s}(g_i)=\mathbf{t}(g_{i+1}),\quad i=1,\cdots,p-1\},$$
i.e., $\mathcal{G}^{(p)}$ is the manifold of composable sequences of $p$ arrows. In particular, $\mathcal{G}^{(0)}=M$ and $\mathcal{G}^{(1)}=\mathcal{G}.$

Given any Lie groupoid $\mathcal{G}$, let ${\rm pr}_1, {\rm pr}_2:\mathcal{G}^{(2)}\rightarrow \mathcal{G}$ be the projections into the first and the second component,  respectively. 

\begin{defn}
Let $\mathcal{G}$ be a  Lie groupoid  and $\omega\in \Omega^2(\mathcal{G})$ be a symplectic form. 
 We say $(\mathcal{G}, \omega)$ is a \textbf{symplectic groupoid} if $\omega$ is multiplicative, i.e., $${\rm pr}_1^*\omega+{\rm pr}_2^*\omega-\mathbf{m}^*\omega = 0.$$   
\end{defn}

Associated to every Lie groupoid $\mathcal{G}\rightrightarrows M$, there are de Rham cohomology groups defined as follows.

For each $p\geq 1$ and $0 \leq i \leq p $, the face maps \(\partial^p_i: \mathcal{G}^{(p)}\to \mathcal{G}^{(p-1)}\) are defined piecewise:
\[
\partial^p_i(g_1,g_2,\dots,g_p)=
\begin{cases}
(g_2,g_3,\dots,g_p),&i = 0,\\
(g_1,\dots,g_ig_{i+1},\dots,g_p),&1\leq i\leq p - 1,\\
(g_1,g_2,\dots,g_{p - 1}),&i = p.
\end{cases}
\]
In particular, $\partial^1_0=\mathbf{s}$ and $\partial^1_1=\mathbf{t}.$ In fact, $\mathcal{G}^\bullet$ is a simplicial manifold.

For any $q$, we define the differential $$\partial:= \sum_{i=0}^n(-1)^i(\partial_i^n)^*\colon \Omega^q(\mathcal{G}^{(n-1)})\rightarrow \Omega^q(\mathcal{G}^{(n)}).$$ We have that $\partial \circ \partial = 0$, and therefore a complex $(\Omega^q(\mathcal{G}^{\bullet}),\partial)$, called the Čech complex of $\mathcal{G}$ associated to $\Omega^q$. We thus obtain the Čech cohomology group ${\rm H}^\bullet(\mathcal{G},\Omega^q)$.

We may also define de Rham cohomology of a Lie groupoid. The de Rham total complex is defined to be $C_{\rm dR}^n(\mathcal{G}) : = \bigoplus_{p+q=n}\Omega^q(\mathcal{G}^{(p)})$. The total differential $\delta: C_{\rm dR}^n(\mathcal{G})\rightarrow C_{\rm dR}^{n+1}(\mathcal{G})$ is defined by $\delta(\omega) = \partial \omega + (-1)^pd\omega$. We have that $\delta \circ \delta = 0$. We thus obtain the de Rham cohomology of $\mathcal{G}$, denoted ${\rm H}^\bullet_{\rm dR}(\mathcal{G})$. Both Čech cohomology and de Rham cohomology of Lie groupoids are invariant under Morita equivalence.

Next we recall the notion of cotangent groupoids. Let ${\mathcal G}\rightrightarrows M$ be a Lie groupoid with structure maps
\[
\SG,\TG:{\mathcal G}\to M,\qquad
\MG:{\mathcal G}^{(2)}\to{\mathcal G},\qquad
\UG:M\to{\mathcal G},\qquad
\IG:{\mathcal G}\to{\mathcal G}.
\]
Let $ A = \operatorname{ker}(\SG_*)|_M$ be its Lie algebroid with anchor
\[
\rho:=\TG_* :  A\to TM.
\]

\begin{thm}[Coste--Dazord--Weinstein, \cite{MR0996653}]\label{thm:CDW-formula}
There exists a Lie groupoid structure
\[
T^*{\mathcal G}\rightrightarrows  A^*
\]
with structure maps $\SG_0,\TG_0,\MG_0,\UG_0,\IG_0$ defined as follows.

\begin{enumerate}
\item[\textup{(1)}] \textbf{Source map.}
For $\xi\in T^*_g{\mathcal G}$,
$\SG_0(\xi)\in  A^*_{\SG(g)}$ is uniquely determined by
\begin{equation}\label{Eqt:smap-tangentgroupoid}
\langle \SG_0(\xi),a\rangle
=
\langle \xi,(L_g)_*(a-\UG_*\rho(a))\rangle,
\quad
a\in A_{\SG(g)}.
\end{equation}

\item[\textup{(2)}] \textbf{Target map.}
For $\xi\in T^*_g{\mathcal G}$,
$\TG_0(\xi)\in  A^*_{\TG(g)}$ is uniquely determined by
\begin{equation}\label{Eqt:tmap-tangentgroupoid}
\langle \TG_0(\xi),b\rangle
=
\langle \xi,(R_g)_*b\rangle,
\quad
b\in A_{\TG(g)}.
\end{equation}

\item[\textup{(3)}] \textbf{Multiplication.}
If $(g,h)\in{\mathcal G}^{(2)}$ and
$\xi\in T^*_g{\mathcal G}$,
$\eta\in T^*_h{\mathcal G}$ satisfy
$\SG_0(\xi)=\TG_0(\eta)$,
then $\MG_0(\xi,\eta)\in T^*_{\MG(g,h)}{\mathcal G}$ is uniquely determined by
\begin{equation}
\langle \MG_0(\xi,\eta),\MG_*(v,w)\rangle
=
\langle \xi,v\rangle+\langle \eta,w\rangle,    
\end{equation}

for all $(v,w)\in T_g{\mathcal G}\times T_h{\mathcal G}$
such that $\SG_*v=\TG_*w$.

\item[\textup{(4)}] \textbf{Unit map.}
For $\alpha\in  A_m^*$,
$\UG_0(\alpha)\in T^*_{\UG(m)}{\mathcal G}$ is defined by
\begin{equation}\label{Eqt:umap-tangentgroupoid}
\langle \UG_0(\alpha),v\rangle
=
\langle \alpha,v-\UG_*\SG_*v\rangle,
\quad
v\in T_{\UG(m)}{\mathcal G}.
\end{equation}

\item[\textup{(5)}] \textbf{Inverse map.} For $\xi\in T^*_g{\mathcal G}$, we have that
\[
\IG_0(\xi)=-\IG^*\xi.
\]
\end{enumerate}

Moreover,
\[
(T^*{\mathcal G},\omega_{can})
\rightrightarrows
(A^*,\pi_{Lie})
\]
is a symplectic groupoid integrating the Lie--Poisson structure on $ A^*$.
\end{thm}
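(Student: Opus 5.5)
The statement to prove is the Coste--Dazord--Weinstein theorem (Theorem~\ref{thm:CDW-formula}): the maps $\SG_0,\TG_0,\MG_0,\UG_0,\IG_0$ make $T^*\mathcal G\rightrightarrows A^*$ a Lie groupoid, and $\omega_{\rm can}$ is multiplicative. Rather than checking all groupoid axioms by hand, I would obtain everything from the special case $\gamma=0$ of the machinery in Section~\ref{sec:deform-grpd}. With $\gamma=0$, formulas \eqref{eq:groupoid-def-1}--\eqref{eq:inverse-map} reduce exactly to the maps in the statement. The hypotheses of Theorem~\ref{thm:groupoid-cond} ($\partial\gamma=0$, $\sigma_0^*\gamma=\sigma_1^*\gamma=0$) hold trivially. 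The proofs of Propositions~\ref{propn:semi-groupoid}, \ref{propn:unit-cond} and \ref{prop: inverse-cond} never used the fact that $T^*\mathcal G$ was already a groupoid; they only used the Lie groupoid structure of $\mathcal G$ and $T\mathcal G$, together with Lemma~\ref{prop:elementinkernel}. So there is no circularity in this route.

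The steps, in order, are as follows. First, I would check that $\SG_0$ and $\TG_0$ are surjective submersions onto $A^*$. Fiberwise over $g$, the map $\TG_0$ is dual to the injective map $b\mapsto (R_g)_*b$, hence surjective. Combined with the submersion $\TG:\mathcal G\to M$, this gives a submersion. The same argument works for $\SG_0$, using injectivity of $a\mapsto (L_g)_*(a-\UG_*\rho(a))$: this vector is tangent to the $\TG$-fiber and differs from $(L_g)_*(-\IG_*a)$ only by a sign, so the map is injective. Second, I would establish well-definedness of $\MG_0$. The map $\MG_*:T_g\mathcal G\times_{TM}T_h\mathcal G\to T_{gh}\mathcal G$ is surjective, since it is already surjective on vectors of the form $(v,0)$ plus right-translated ones. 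Its kernel is described by Lemma~\ref{prop:elementinkernel}, and the compatibility $\SG_0(\xi)=\TG_0(\eta)$ is exactly the vanishing of the right-hand side on that kernel. This is the computation \eqref{eq:xi-relation} with $\gamma=0$. Third, the associativity, source/target compatibility, unit and inverse axioms follow from Propositions~\ref{propn:semi-groupoid}, \ref{propn:unit-cond} and \ref{prop: inverse-cond}. For the inverse, note that $\hat\IG$ with $\gamma=0$ is $\IG_0$, that is, $-\IG^*$, once $\IG$ is used to transport covectors. Smoothness of all the structure maps is clear from the formulas.

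For multiplicativity, I would use the Liouville computation preceding Proposition~\ref{prop: multiplicative-cond}. By definition of $\MG_0$, one has ${\rm pr}_1^*\vartheta_L+{\rm pr}_2^*\vartheta_L-\MG_0^*\vartheta_L=0$ on $(T^*\mathcal G)^{(2)}$, so $\partial\vartheta_L=0$. Hence $\partial\omega_{\rm can}=d\partial\vartheta_L=0$, which is Proposition~\ref{prop: multiplicative-cond} with $\gamma=0$ and $\omega_B=0$. Since $\omega_{\rm can}$ is symplectic, $(T^*\mathcal G,\omega_{\rm can})$ is a symplectic groupoid.

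It remains to identify the induced Poisson structure on $A^*$ with the Lie--Poisson structure, and I expect this to be the main obstacle. I would show that $\TG_0$ is a Poisson map, which reduces to checking brackets on generators. For $X\in\Gamma(A)$, the pullback $\TG_0^*l_X$ is the fiberwise-linear function of the right-invariant vector field $\overrightarrow X$. A standard cotangent-bundle identity then gives $\{\TG_0^*l_X,\TG_0^*l_Y\}_{\rm can}=\TG_0^*l_{[X,Y]}$, up to the sign conventions of the bracket of right-invariant fields. I would also use $\{\TG_0^*l_X,\TG_0^*q^*f\}=\TG_0^*q^*(\rho(X)f)$ and $\{q^*f,q^*g\}=0$. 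Getting the signs consistent with the conventions $\rho=\TG_*$ and $A=\ker\SG_*|_M$ is the delicate part. Since the symplectic groupoid determines a unique Poisson structure on its base making the target map Poisson, agreement on these generators suffices.
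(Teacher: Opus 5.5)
The paper does not prove this theorem. It quotes Coste--Dazord--Weinstein in Appendix~A as background, and Section~\ref{sec:deform-grpd} is built on top of it. Your plan runs that section backwards: you obtain the theorem as the case $\gamma=0$ of Theorem~\ref{thm:groupoid-cond}, plus the Liouville computation before Proposition~\ref{prop: multiplicative-cond}. This is a legitimate alternative that would make the paper self-contained, and much of it is fine. At $\gamma=0$ the following are honestly derived from the formulas, using only the structure of $\mathcal G$:
the well-definedness of $\MG_0$ (Lemma~\ref{prop:elementinkernel});
associativity;
the source/target compatibilities of Proposition~\ref{propn:semi-groupoid};
your submersion argument;
and $\partial\vartheta_L=0\Rightarrow\partial\omega_{\rm can}=0$.

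However, your assertion that Propositions~\ref{propn:unit-cond} and~\ref{prop: inverse-cond} never use the groupoid structure of $T^*\mathcal G$ is false, and at $\gamma=0$ this is exactly where it bites.
\begin{itemize}
\item The proof of Proposition~\ref{propn:unit-cond} opens with $\langle\TG_0(\UG_0(\alpha)),a\rangle=\langle\alpha,a\rangle$, and its second half uses $\SG_0\circ\UG_0=\mathrm{id}$.
\item The proof of Proposition~\ref{prop: inverse-cond} opens with $\SG_0\circ\IG_0=\TG_0$, and likewise uses $\TG_0\circ\IG_0=\SG_0$.
\item Its last step uses that $\SG_0$ maps $\UG_0(A^*)$ bijectively onto $A^*$.
\end{itemize}
These are precisely the unit and inverse axioms of the undeformed groupoid. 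So for $\gamma=0$, citing those propositions assumes what you want to prove.

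You must check these identities directly, which is short. For $\alpha\in A^*_x$, $a\in A_x$, $\xi\in T^*_k\mathcal G$ and $b\in A_{\TG(k)}$:
\[
\langle \TG_0\UG_0\alpha,a\rangle=\langle\alpha,a-\UG_*\SG_*a\rangle=\langle\alpha,a\rangle,\qquad
\langle \SG_0\UG_0\alpha,a\rangle=\langle\alpha,-\IG_*a+\UG_*\SG_*\IG_*a\rangle=\langle\alpha,a\rangle,
\]
\[
\langle \SG_0\IG_0\xi,b\rangle=\langle\xi,\IG_*(L_{k^{-1}})_*\IG_*b\rangle=\langle\xi,(R_k)_*b\rangle=\langle\TG_0\xi,b\rangle .
\]
These use $a-\UG_*\rho(a)=-\IG_*a$, $\SG_*\IG_*a=\rho(a)$, and $\IG\circ L_{k^{-1}}=R_k\circ\IG$ on $\TG^{-1}(\TG(k))$. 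The identity $\TG_0\circ\IG_0=\SG_0$ follows in the same way from $\IG\circ R_{k^{-1}}=L_k\circ\IG$. The first identity gives composability of $(\xi,\UG_0\SG_0\xi)$ in the unit computation. The second, together with injectivity of $\UG_0$, gives the bijection $\UG_0(A^*)\to A^*$ used at the end of the inverse argument. With these in place, the rest of Section~\ref{sec:deform-grpd} applies verbatim.

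Two minor points remain.
\begin{itemize}
\item Smoothness of $\MG_0$ cannot literally be read off, since $\MG_0$ is defined implicitly. Use a local smooth right inverse of $\MG_*$, which exists because $\MG$ is a submersion, to write it explicitly.
\item Your Poisson argument via $\TG_0^*l_X=l_{\overrightarrow X}$ is the standard one; the paper offers nothing to compare it with. The sign is settled once you fix the bracket convention for $\omega_{\rm can}=d\vartheta_L$ and define the bracket on $A$ by $[\overrightarrow X,\overrightarrow Y]=\overrightarrow{[X,Y]}$.
\end{itemize}
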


\section{Proof of Theorem \ref{thm: red-grpd}}\label{sec:gamma-comp}
Denote the structure maps of $\tG$ to be $\ST,\TT,\MT,\UT,\IT$, and the structure maps on the cotangent groupoid $T^*\tG$ to be $\SC, \TC,\MC,\UC,\IC$. Further, we denote the structure maps on the standard cotangent groupoid $T^*\mathcal{G}$ to be $\SG_0, \TG_0,\MG_0,\UG_0,\IG_0$. Let $\mu:T^*\tG\rightarrow \mathbb{R}$ be the moment map, with the subgroupoid $\mu^{-1}(1)\subset T^*\tG$ and the reduced groupoid $\Gamma = \mu^{-1}(1)/\mathbb{S}^1$.

By the Kummer-Marsden-Satzer theorem, under the choice of connection 1-form $\theta$, there is a diffeomorphism $\beta: T^*\mathcal{G}\rightarrow \Gamma$. We need to show that the descended groupoid structure on $\Gamma$ transfers to a deformed cotangent groupoid structure on $T^*\mathcal{G}$, under the diffeomorphism $\beta$.

We first study the transferred source map $\hat{\SG}:T^*\mathcal{G}\rightarrow A^*$. We have the following commutative diagram:
\[
\begin{tikzcd}
T^*\mathcal{G} \arrow[r, "\beta"]\arrow[d, "\hat{\SG}"'] & \Gamma \arrow[d, "\SG_{\Gamma}"'] &
\mu^{-1}(1) \arrow[l, "\tilde p"'] \arrow[r, "\tilde j"] \arrow[d, "\SG_K"'] &
T^*\tG \arrow[d, "\SC"] \\
A^* \arrow[r, "\cong"] & A^*\times\{1\} &
A^*\times\{1\} \arrow[l, "\mathrm{id}"] \arrow[r, "j"] &
 A^*\times \mathbb{R}.
\end{tikzcd}
\]

For any $g\in \tG$ with $p(g)\in \mathcal{G}$, and $\xi \in T^*\mathcal{G}$, we define $\tilde{\xi}\in T^*_g\tG$ by 
\[
\tilde{\xi}:=(dp_g)^*\xi+\theta_g.
\]
Then we have by definition that $\beta(\xi) = \tilde{p}(\tilde{\xi})$.  In view of the diagram above, we have that in $A^*\times \{1\}$
\[
(\hat{\SG}(\xi),1) = \SC (\tilde{\xi}).
\]

We may compute the RHS by the CDW formula. For $(a,c)\in \tA = A\times \mathbb{R}$, we have
\begin{equation*}
\begin{aligned}
    \langle \SC(\tilde{\xi}), (a,c)\rangle &= \langle (dp|_g)^*\xi + \theta_g, -(L_g)_*{\IT}_*(a,c)\rangle\\
    &=\langle (dp|_g)^*\xi,-(L_g)_*{\IT}_*(a,c)\rangle - \langle\theta_g, (L_g)_*{\IT}_*(a,c)\rangle\\
    &=\langle (dp|_g)^*\xi,-(L_g)_*{\IT}_*(a,0)\rangle - \langle\theta_g, (L_g)_*{\IT}_*(a,0)\rangle-\langle\theta_g, (L_g)_*{\IT}_*(0,c)\rangle\\ 
    &=\langle (dp|_g)^*\xi,-(L_g)_*{\IT}_*(a,0)\rangle - \langle\theta_g, (L_g)_*{\IT}_*(a,0)\rangle+ c\langle\theta_g, \psi(1)\rangle\\
    &= \langle \SG_0(\xi),a\rangle - \langle\theta_g, (L_g)_*{\IT}_*(a,0)\rangle + c.
\end{aligned}    
\end{equation*}
Here we used $\IT_*a = -(a-\UT_*\tilde{\rho}(a))$. Further, we have that $ \langle\theta_g, (L_g)_*{\IT}_*(a,0)\rangle = \theta_g(\MT_*(0_g,(\IT_*a,0)))$, and \[(\partial \theta)(0_g,(\IT_*a,0)) = \theta(0_g)+ \theta((\IT_*a,0)) - \langle\theta_g, (L_g)_*{\IT}_*(a,0)\rangle.\] Since $\theta$ is identified with $(0,1) \in A^*\times \mathbb{R}$, we conclude that
\[
\langle \SC(\tilde{\xi}), (a,c)\rangle=\langle \SG_0(\xi),a\rangle + (\partial \theta)(0_g,(\IG_*a,0)) + c.
\]

Since \((p^{(2)})^*\gamma=\partial\theta\) and
\[
(dp,dp)(0_g,\tilde\iota_*(a,0))=(0_k,\iota_*a),
\]
we obtain
\[
(\partial\theta)(0_g,\tilde\iota_*(a,0))
=
\gamma(0_k,\iota_*a).
\]
Therefore
\[
\langle \bar \SG(\tilde\xi),(a,c)\rangle
=
\langle \SG_0(\xi),a\rangle
+
\gamma(0_k,\iota_*a)
+
c.
\]
Thus \(\bar \SG(\tilde\xi)=(\hat \SG(\xi),1)\), where
\[
\langle \hat \SG(\xi),a\rangle
=
\langle \SG_0(\xi),a\rangle+\gamma(0_k,\iota_*a).
\]

The target map is computed similarly. Let \(b\in A_{\TG(k)}\), \(c\in\mathbb R\),
and regard \((b,c)\in \tilde A_{\tilde \TG(g)}\simeq A_{\TG(k)}\oplus\mathbb R\).
For \(\tilde\xi=(dp_g)^*\xi+\theta_g\), the CDW formula gives
\[
\begin{aligned}
\langle \bar \TG(\tilde\xi),(b,c)\rangle
&=
\langle \tilde\xi,(R_g)_*(b,c)\rangle  \\
&=
\langle \xi,(R_k)_*b\rangle
+
\theta_g((R_g)_*(b,0))
+
c  \\
&=
\langle t_0(\xi),b\rangle
+
\theta_g((R_g)_*(b,0))
+
c .
\end{aligned}
\]
On the other hand,
\[
(\partial\theta)((b,0),0_g)
=
\theta((b,0))+\theta(0_g)-\theta_g((R_g)_*(b,0)).
\]
Since \((b,0)\) is horizontal with respect to the splitting determined by
\(\theta\), we have
\(
\theta((b,0))=0.
\) Therefore
\[
\theta_g((R_g)_*(b,0))
=
-(\partial\theta)((b,0),0_g).
\]
Using \((p^{(2)})^*\gamma=\partial\theta\) and
\[
(dp,dp)((b,0),0_g)=(b,0_k),
\]
we obtain
\[
\theta_g((R_g)_*(b,0))
=
-\gamma(b,0_k).
\]
Thus
\[
\langle \bar \TG(\tilde\xi),(b,c)\rangle
=
\langle \TG_0(\xi),b\rangle
-
\gamma(b,0_k)
+
c.
\]
Hence \(\bar \TG(\tilde\xi)=(\hat \TG(\xi),1)\), where
\[
\langle \hat \TG(\xi),b\rangle
=
\langle \TG_0(\xi),b\rangle-\gamma(b,0_k).
\]

For the multiplication, let \(k_i=p(g_i)\), with \((g_1,g_2)\in \tG^{(2)}\), and let
\[
\tilde \xi_i=(dp_{g_i})^*\xi_i+\theta_{g_i}\in T^*_{g_i}\tG .
\]
By the source and target computations above,
\[
\bar \SG(\tilde \xi_1)=(\hat \SG(\xi_1),1),\qquad
\bar \TG(\tilde \xi_2)=(\hat \TG(\xi_2),1).
\]
Hence, if \(\hat \SG(\xi_1)=\hat \TG(\xi_2)\), then
\(\tilde \xi_1\) and \(\tilde \xi_2\) are composable.

Let \(v_i\in T_{g_i}\tG\) satisfy
\[
\tilde \SG_*v_1=\tilde \TG_*v_2.
\]
Then the CDW formula gives
\[
\begin{aligned}
\left\langle
\bar \MG(\tilde \xi_1,\tilde \xi_2),
\tilde \MG_*(v_1,v_2)
\right\rangle
&=
\langle \tilde\xi_1,v_1\rangle
+
\langle \tilde\xi_2,v_2\rangle \\
&=
\langle \xi_1,p_*v_1\rangle
+
\langle \xi_2,p_*v_2\rangle
+
\theta_{g_1}(v_1)+\theta_{g_2}(v_2) \\
&=
\langle \xi_1,p_*v_1\rangle
+
\langle \xi_2,p_*v_2\rangle \\
&\quad+
(\partial\theta)(v_1,v_2)
+
\theta_{g_1g_2}\bigl(\tilde \MG_*(v_1,v_2)\bigr) \\
&=
\langle \xi_1,p_*v_1\rangle
+
\langle \xi_2,p_*v_2\rangle \\
&\quad+
\gamma(p_*v_1,p_*v_2)
+
\theta_{g_1g_2}\bigl(\tilde \MG_*(v_1,v_2)\bigr).
\end{aligned}
\]
Since \(p\) is a groupoid morphism,
\[
p_*\tilde \MG_*(v_1,v_2)=\MG_*(p_*v_1,p_*v_2).
\]
Therefore \(\bar \MG(\tilde \xi_1,\tilde \xi_2)\) is the \(\beta\)-lift of the covector
\(\hat \MG(\xi_1,\xi_2)\in T^*_{k_1k_2}G\) determined by
\[
\left\langle \hat \MG(\xi_1,\xi_2),\MG_*(w_1,w_2)\right\rangle
=
\langle \xi_1,w_1\rangle
+
\langle \xi_2,w_2\rangle
+
\gamma(w_1,w_2),
\]
for all \(w_i\in T_{k_i}\mathcal G\) with \(\SG_*w_1=\TG_*w_2\).

For the unit map, let \((\alpha,1)\in \tilde A_m^*\cong A_m^*\times \mathbb R\).
For any \(v\in T_{\tilde \UG(m)}\tilde{\mathcal G}\), the CDW formula gives
\[
\begin{aligned}
\langle \bar \UG(\alpha,1),v\rangle
&=
\left\langle (\alpha,1),\,v-\tilde \UG_*\tilde \SG_*v\right\rangle  \\
&=
\left\langle \alpha,\,p_*v-\UG_*\SG_*p_*v\right\rangle
+
\theta(v)-\theta(\tilde u_*\tilde \SG_*v).
\end{aligned}
\]
Since \(\tilde \UG^*\theta=0\), the last term vanishes. Hence
\[
\langle \bar \UG(\alpha,1),v\rangle
=
\langle \UG_0(\alpha),p_*v\rangle+\theta(v).
\]
Therefore
\[
\bar \UG(\alpha,1)
=
(dp_{\tilde u(m)})^*\UG_0(\alpha)+\theta_{\tilde \UG(m)}.
\]
Under the identification \(T^*\mathcal G\simeq \mu^{-1}(1)/\mathbb S^1\), the covector $\bar \UG(\alpha,1)$ correponds to $\UG_0(\alpha)$. Thus the reduced unit map is
\[
\hat \UG=\UG_0.
\]

For the inverse map, let \(k=p(g)\), \(\xi\in T_k^*\mathcal G\), and set
\[
\tilde\xi=(dp_g)^*\xi+\theta_g\in T_g^*\tilde{\mathcal G}.
\]
For any \(v\in T_{g^{-1}}\tilde{\mathcal G}\), the CDW formula gives
\[
\begin{aligned}
\langle \bar\iota(\tilde\xi),v\rangle
&=
-\langle \tilde\xi,\tilde\iota_*v\rangle  \\
&=
-\langle \xi,p_*\tilde\iota_*v\rangle
-
\theta_g(\tilde\iota_*v) \\
&=
\langle \iota_0(\xi),p_*v\rangle
-
\theta_g(\tilde\iota_*v).
\end{aligned}
\]
On the other hand,
\[
(\partial\theta)(\tilde\iota_*v,v)
=
\theta_g(\tilde\iota_*v)
+
\theta_{g^{-1}}(v)
-
\theta_{\tilde \UG(\tilde \SG(g))}
\bigl(\tilde \MG_*(\tilde\iota_*v,v)\bigr).
\]
Since \(\tilde \MG_*(\tilde\iota_*v,v)\) is tangent to the unit section and
\(\tilde \UG^*\theta=0\), the last term vanishes. Hence
\[
-\theta_g(\tilde\iota_*v)
=
-(\partial\theta)(\tilde\iota_*v,v)
+
\theta_{g^{-1}}(v).
\]
Therefore
\[
\begin{aligned}
\langle \bar\iota(\tilde\xi),v\rangle
&=
\langle \iota_0(\xi),p_*v\rangle
-
(\partial\theta)(\tilde\iota_*v,v)
+
\theta_{g^{-1}}(v) \\
&=
\langle \iota_0(\xi),p_*v\rangle
-
\gamma(\iota_*p_*v,p_*v)
+
\theta_{g^{-1}}(v).
\end{aligned}
\]
Thus \(\bar\iota(\tilde\xi)\) is the lift of \(\hat\iota(\xi)\), where
\[
{
\langle \hat\iota(\xi),w\rangle
=
\langle \iota_0(\xi),w\rangle
-
\gamma(\iota_*w,w)
}
\]
for all \(w\in T_{k^{-1}}\mathcal G\).

\vspace{20pt}  

\noindent \textbf{Acknowledgments.}
We are
especially grateful to Ping Xu for suggesting the possibility of
looking for a deformed groupoid structure, to Luen-Chau Li for
clarifying the relation with magnetic-form conventions, and to
Marco Zambon for pointing us to relevant classical literature on
reduction in Poisson geometry and for helpful comments on possible
further directions. 
We thank Zhuo Chen, Miquel Cueca, Wenda Fang, Žan Grad, Nigel Higson, Noriaki Ikeda,
Honglei Lang, Zhangju Liu, Yiannis Loizides, João Nuno Mestre, Xiaomeng Xu, Mathieu Sti\'enon, and
Bin Zhang for helpful discussions and useful feedback. Parts of this work were carried out during the second author's stays
at Henan University and at RIMS, Kyoto University, during the \emph{GAP 2026} conference. The paper was further revised during the authors'
participation in the workshop
\emph{Deformation problems in and around Poisson geometry} at KU Leuven.
The authors thank these institutions and the organizers of GAP 2026 and
of the Leuven workshop for their hospitality and support.

%\newpage
%\printbibliography
\end{document}